\def\theequation{\@arabic\c@equation}
\newcommand{\gaD}{\gamma_{{}_D}}
\newcommand{\gr}{{\text{graph}}}
\newcommand{\gaN}{\gamma_{{}_N}}
\newcommand{\tN}{\tau_{{}_N}}
\newcommand{\Om}{\Omega}
\newcommand{\dOm}{{\partial\Omega}}
\newcommand{\Sp}{\operatorname{Spec}}
\newcommand{\e}{\hbox{\rm e}}
\newcommand{\bbM}{{\mathbb{M}}}
\newcommand{\bbN}{{\mathbb{N}}}
\newcommand{\bbR}{{\mathbb{R}}}
\newcommand{\R}{{\mathbb{R}}}
\newcommand{\bbZ}{{\mathbb{Z}}}
\newcommand{\C}{{\mathbb{C}}}
\newcommand{\bbK}{{\mathbb{K}}}
\newcommand{\cA}{{\mathcal A}}
\newcommand{\cB}{{\mathcal B}}
\newcommand{\cD}{{\mathcal D}}
\newcommand{\cF}{{\mathcal F}}
\newcommand{\cG}{{\mathcal G}}
\newcommand{\cH}{{\mathcal H}}
\newcommand{\cI}{{\mathcal I}}
\newcommand{\cK}{{\mathcal K}}
\newcommand{\cL}{{\mathcal L}}
\newcommand{\cQ}{{\mathcal Q}}
\newcommand{\cS}{{\mathcal S}}
\newcommand{\cT}{{\mathcal T}}
\newcommand{\cV}{{\mathcal V}}
\newcommand{\cW}{{\mathcal W}}
\newcommand{\cX}{{\mathcal X}}
\newcommand{\cY}{{\mathcal Y}}
\newcommand{\cZ}{{\mathcal Z}}
\newcommand{\bfi}{{\bf i}}
\newcommand{\no}{\nonumber}
\newcommand{\lb}{\label}
\newcommand{\wti}{\widetilde  }
\newcommand{\ran}{\text{\rm{ran}}}
\newcommand{\bi}{\bibitem}
\newcommand{\hatt}{\widehat}
\newcommand{\mi}{\operatorname{Mas}}
\newcommand{\mo}{\operatorname{Mor}}
\newcommand{\noh}{N^{1/2}(\partial\Omega)}
\newcommand{\rnohs}{\rangle_{(N^{1/2}(\partial\Omega))^*}}
\newcommand{\lnoh}{_{N^{1/2}(\partial\Omega)}\langle}
\newcommand{\gd}{\widehat{\gamma}_D}
\newcommand{\gn}{\widehat{\gamma}_N}
\newcommand{\Htwo}{H^2(\Omega)}
\newcommand{\Htworm}{H^2(\Omega;\R^{2m})}
\newcommand{\Ltworm}{L^2(\Omega;\R^{m})}
\newcommand{\Ltwor}{L^2(\Omega;\mathbb{R})}
\newcommand{\bndr}{H^{{1}/{2}}(\partial \Omega,\mathbb C^m)\times H^{-{1}/{2}}(\partial \Omega,\mathbb C^m)}
\newcommand{\bndro}{H^{{1}/{2}}(\partial \Omega)\times H^{-{1}/{2}}(\partial \Omega)}
\numberwithin{equation}{section}
\renewcommand{\det}{\operatorname{det}}
\newcommand{\dom}{\operatorname{dom}}
\newcommand{\codim}{\operatorname{codim}}
\newcommand{\tr}{\operatorname{Tr}}
\newcommand{\sign}{\operatorname{sign}}
\newcommand{\spec}{\operatorname{Spec}}
\newcommand{\spflow}{\operatorname{SpFlow}}
\renewcommand{\ker}{\operatorname{ker}}
\newcommand{\diag}{\operatorname{diag}}
\newtheorem{theorem}{Theorem}[section]
\newtheorem{hypothesis}[theorem]{Hypothesis}
\newtheorem{lemma}[theorem]{Lemma}
\newtheorem{corollary}[theorem]{Corollary}
\newtheorem{proposition}[theorem]{Proposition}
\theoremstyle{definition}
\newtheorem{definition}[theorem]{Definition}
\newtheorem{example}[theorem]{Example}
\newtheorem{remark}[theorem]{Remark}
\begin{document} 
\begin{abstract}
We consider second order elliptic differential operators on a bounded Lipschitz domain $\Omega$. Firstly, we establish a natural one-to-one correspondence between their self-adjoint extensions, with domains of definition containing in $H^1(\Omega)$, and Lagrangian planes in $\bndro$. Secondly, we derive a formula relating the spectral flow of the one-parameter families of such operators to the Maslov index, the topological invariant counting the signed number of conjugate points of paths of Lagrangian planes in $\bndro$. Furthermore, we compute the Morse index, the number of negative eigenvalues, in terms of the Maslov index for several classes of the second order operators: the $\vec{\theta}-$periodic Schr\"odinger operators on a period cell $Q\subset \bbR^n$,  the elliptic operators with Robin-type boundary conditions, and the abstract self-adjoint extensions of the Schr\"odinger operators on star-shaped domains. Our work is built on the techniques recently developed by B.~Boo{\ss}-Bavnbek, K.\ Furutani, and C. Zhu, and extends the scope of validity of their spectral flow formula by incorporating the self-adjoint extensions of the {\it second} order operators with domains in the {\it first} order Sobolev space $H^1(\Omega)$. In addition, we generalize the results concerning relations between the Maslov and Morse indices quite recently obtained by G.\ Cox, J.\ Deng, C. Jones, J.~\ Marzuola, A.\ Sukhtayev and the authors. Finally, we describe and study a link between the theory of abstract boundary triples and the Lagrangian description of self-adjoint extensions of abstract symmetric operators. 
\end{abstract}

\allowdisplaybreaks

\title[Maslov Index]{The Maslov Index and the Spectra of Second Order Elliptic Operators}

\author[Y. Latushkin]{Yuri Latushkin}
\address{Department of Mathematics,
The University of Missouri, Columbia, MO 65211, USA}
\email{latushkiny@missouri.edu}
\author[S. Sukhtaiev]{Selim Sukhtaiev	}
\address{Department of Mathematics,
The University of Missouri, Columbia, MO 65211, USA}
\email{sswfd@mail.missouri.edu}
\date{\today}
\keywords{Schr\"odinger equation, Hamiltonian systems, periodic potentials, eigenvalues, stability, differential operators, discrete spectrum, Fredholm Lagrangian Grassmanian, symplectic forms}
\thanks{Supported by the NSF grant  DMS-1067929, by the Research Board and Research Council of the University of Missouri, and by the Simons Foundation. We thank S.\ Cappell for bringing to our attention several important papers, F.\ Gesztesy for discussions of the topic of self-adjoint extensions of symmetric operators, and  M.\ Beck, G.\ Cox, C.\ K.\ R.\ T.\ Jones for discussions of applications of the Maslov index to nonlinear partial differential equations.}
\maketitle
{\scriptsize{\tableofcontents}}

\section{Introduction}\lb{intro}

This paper intertwines three major themes: (1) Relations between the spectral flow for a family of linear elliptic differential operators and the Maslov index of a path of Lagrangian planes formed by the abstract traces of solutions of respective homogeneous partial differential equations; (2) Relations between the Morse index and the Maslov index in the context of Lagrangian planes given by standard PDE traces of weak solutions of the homogeneous equations; (3) Relations between the self-adjoint extensions of abstract symmetric operators and the Lagrangian planes defined by means of boundary triples. 

The first topic is motivated by the celebrated Atiyah--Patodi--Singer index theorem \cite{APS, AS}, and goes back to the classical works \cite{CLM, Nic, RoSa95}. In particular, great progress has been recently made in calculations of the spectral flow via the Maslov index,  \cite{BbF95, BZ3, BZ1, BZ2, KL, F04, SW}. Here, the spectral flow is the net count of the eigenvalues of a family of self-adjoint differential operators that move through a given value of the spectral parameter, and  the Maslov index is a topological invariant that measures the signed number of intersections of paths in the space of Lagrangian planes  \cite{arnold67, Arn85, dG, McS}. The second topic has its roots in the classical Morse--Smale-type theorems, see \cite{A01, B56, CD77, CZ84, D76, M63}. It is of great interest in stability theory for multidimensional patterns for reaction-diffusion equations, see \cite{KP, DJ11}. In recent years the relation between the Morse index (the number of unstable eigenvalues) and the Maslov index has attracted much attention, see  \cite{CJLS, CJM1, CJM2, DJ11, HS, HLS, JLM, JLS, LSS, PW}. These results can be viewed as a far reaching generalization of the classical Sturm Theorems for ODE's and systems of ODE's, cf. \cite{B56, arnold67, Arn85}, Courant's nodal domain theorem \cite{CH}, and more recent results in \cite{Fr}. The third topic is originated in the Birman--Krein--Vishik theory of self-adjoint extensions of symmetric operators, see a modern exposition in \cite{AlSi, Gr1}, and also in the theory of the abstract boundary triples, see \cite{GG, Ko}.  A critical series of results in this work is that the self-adjoint extensions of a symmetric operator can be parametrized by Lagrangian planes in some abstract boundary spaces. We note that although the Lagrangian language is not used in \cite{BL, Br, BM, GG} one can easily see an equivalent Lagrangian reformulation of these results contained therein. Summarizing, one can say that the connection between self-adjoint extensions and Lagrangian planes resulted in various formulas relating the spectral flow and the Maslov index of paths of Lagrangian planes formed by {\it strong traces} of solutions to elliptic PDE's, see \cite{BZ3, BZ1, BZ2}. In contrast, the Lagrangian planes considered in \cite{DJ11, CJLS, CJM1, CJM2} are formed by the {\it weak} traces of weak solutions to second order elliptic PDE's. The main contribution of this paper is in tying together all three topics discussed above.
  

Our work is motivated by that of J.\ Deng and C.\ K.\ R.\ T.\ Jones \cite{DJ11} who proposed to compute the Morse index of the Schr\"odinger operator $L=-\Delta+V$ on a star-shaped domain $\Omega$ by scaling it, and counting negative eigenvalues of $L$ via the conjugate points defined by intersections of underlying paths of Lagrangian planes in $\bndro$. The paths are formed by the boundary data of the weak solutions to the equation $Lu-\lambda u=0, \lambda\in\bbR,\ u\in H^1(\Omega),$ and by the subspaces of $\bndro$ corresponding to the boundary conditions.   This approach leads to a natural generalization of the Smale Theorem for Schr\"odinger operators with Robin-type boundary conditions, cf.  \cite{CJLS}. Further advances of this idea appeared in subsequent works: Significantly more general domain variations are considered in \cite{CJM1, CJM2}, the Schr\"odinger operators with non-separated boundary conditions are considered in  \cite{JLM, JLS, LSS, HS}, the one dimensional Schr\"odinger operators defined on $\R$ are considered in \cite{HLS}. While all these papers deal with specific boundary value problems, most of them may be viewed through the prism of abstract theory of self-adjoint extensions of symmetric operators. In a different context, the work in this direction was initiated in the foundational paper \cite{BbF95}, where the following setup was used: Let $S\subset S^*$ be a symmetric operator acting in a Hilbert space $\cH$, and $\{V_t\}_{t=\alpha}^{\beta}$ be a continuous family of bounded self-adjoint operators acting in $\cH$. Let us suppose that $S_{\mathscr D},$ $ \dom(\cS_{\mathscr D})=\mathscr D,$ is a self-adjoint extension of $S$ with compact resolvent. Then $\Upsilon_t:=\ker(S^*+V_t)/\dom(S),\ t\in[\alpha,\beta],$ and $\mathscr D/\dom(S)$ are Lagrangian planes in the quotient space $\cH_S:=\dom(S^*)/\dom(S)$ with respect to the natural symplectic form 
\begin{equation}\lb{u10}
\omega([x],[y])=\langle S^*x,y\rangle_{\cH}-\langle x,S^*y\rangle_{\cH},\ [x],[y]\in\dom(S^*)/\dom(S).
\end{equation} 
It is shown in \cite{BbF95} that the spectral flow of $\{S_{\mathscr D}+V_t\}_{t=\alpha}^{\beta}$ is equal to the Maslov index of the path $\Upsilon_t,t\in[\alpha, \beta],$ with respect to the reference plane $\mathscr D/\dom(S)$.
We notice that the operator $S$ gives rise not only to the one-parameter family $\{S_{\mathscr D}+V_t\}_{t=\alpha}^{\beta}$  but also to the symplectic Hilbert space $\cH_S$ itself. Therefore, the scheme is not suited for a parameter dependent  family $\{S_t\}_{t=\alpha}^{\beta}$ in place of a single operator $S$. However, the subsequent manuscripts \cite{BZ3}, \cite{BZ1}, \cite{BZ2} suggest an elegant way out of this issue. Let us consider a family $\{S_t\}_{t=\alpha}^\beta$ of symmetric operators with a fixed domain, and fix an intermediate space $D_M\subset\cH$ such that 
\begin{equation}\lb{u12u}
	\dom(S_t)\equiv \dom(S_{\alpha})\subset D_M \subset \dom(S^*_t)\subset \cH,\ t\in[\alpha, \beta]. 
\end{equation} 
We will now consider only those self-adjoint extensions of $S_t$ whose domains are subsets of the {\it fixed} subspace $D_M$. Under these assumptions \cite{BZ2} proves the equality of the spectral flow for the family of self-adjoint extensions of $S_t$, and the Maslov index defined by means of the quotient space $D_M/\dom(S_{\alpha})$.

The techniques developed in \cite{BZ2} cover elliptic operators of order $d\in\bbN$ with $D_M$ being equal to the Sobolev space of degree $d$. In particular, letting $d=2$ we consider now a family $\{S_t\}_{t=\alpha}^{\beta}$ of second order uniformly elliptic operators on a smooth domain $\Omega\subset \bbR^n$. Then \cite{BZ2} yields the equality between the spectral flow of the self-adjoint extensions of $S_t$ with domains containing in $D_M=H^2(\Omega)$ and the Maslov index of the corresponding paths of Lagrangian planes.

The purpose of our work is threefold. First, we will reduce the regularity assumption and consider the self-adjoint extensions of second order elliptic operators $S_t$ with domains containing in $H^1(\Omega)$. To illuminate the importance of this improvement we recall that  many differential operators of interest in mathematical physics, spectral geometry, and partial differential equations are defined via first order sesquilinear forms with the help of Lax--Milgram Theorem. This procedure {\it a priori} leads to self-adjoint operators with domains contained in $H^1(\Omega)$. The higher $H^2(\Omega)-$regularity is a subtle issue and depends not only on coefficients of the differential operators but also on geometric characteristics of $\partial\Omega$. Thus the assumption that the domains of self-adjoint extensions of $S_t$  belongs to $H^1(\Omega)$ is quite natural. The main technical obstacle preventing from passing from $H^2-$ to $H^1-$ regularity is that the natural candidate for $D_M$ from \eqref{u12u} is given by the subspace
\begin{equation}\lb{u14}
\{u\in H^1(\Omega): S_tu\in L^2(\Omega)\},
\end{equation}     
and thus varies together with parameter $t$ (if $H^1(\Omega)$ here is replaced by $H^2(\Omega)$ then $S_tu\in L^2(\Omega)$ holds automatically). To overcome this difficulty we map the family of subspaces \eqref{u14} into a {\it fixed} Hilbert space $\bndro$ using the trace map consisting of the Dirichlet and Neumann trace operators.

This brings us to the second goal of this paper. We will replace the quotient space $H^1(\Omega)/H^2_0(\Omega)$ by the more conventional space $\bndro$ of distributions on the boundary. We stress that the two-component trace map consisting of the Dirichlet and Neumann trace operators is not onto when considered as a map from $H^1(\Omega)$ into $\bndro$, cf. Proposition \ref{k1}. Moreover, the quotient space $H^1(\Omega)/H^2_0(\Omega)$ is  not symplectomorphic to the boundary space $\bndro$. Nevertheless, one can bypass the quotient spaces and instead work directly in $\bndro$. And, finally, our third goal is to analyze the variation of spectra of differential operators with respect to geometric deformations of the domain $\Omega$.    


Employing the approach outlined above we derive the spectral count formulas in a very general setting. In particular, it at once covers the following known cases: The Schr\"odinger operators with non-local Robin-type boundary conditions on star-shaped domains $\Omega\subset\bbR^n, n\geq 2$, as in \cite{CJLS}, the Schr\"odinger operators with $\vec{\theta}-$periodic boundary conditions on the unit cell $Q$, as in \cite{LSS}, the second order elliptic operators with Dirichlet and Neumann boundary conditions defined by means of a one-parameter family of diffeomorophic domains $\{\Omega_t\}_{t=\alpha}^{\beta}$, as in \cite{CJM1, CJM2}. In addition, we establish a connection of the Lagrangian description of the self-adjoint extensions of symmetric operators as in \cite{AlSi, BbF95}, and the theory of abstract boundary triples as in \cite{BL, BM, GG, Ko}. Using this connection, we obtain formulas relating the Morse and Maslov indices in an abstract settings, assuming the existence of a family of perturbations and a family of boundary triples. We demonstrate how to apply these formulas for the matrix one- and multidimensional Schr\"odinger operators.

We will now describe the main results of the paper in more details. Let $\Omega\subset\bbR^n$, $n\geq 2$ be a bounded Lipschitz domain, $ m\in\bbN$, and  let the functions
\begin{align}
&A: t\mapsto A^t\in\C^{m\times m},\ B: t\mapsto B^t\in\C^m, q: t\mapsto q^t\in\bbR,\ t\in[\alpha, \beta],
\end{align}
satisfy the following assumptions:
\begin{align}
&A\in C({[\alpha,\beta]},L^{\infty}(\Omega, \C^{m\times m})),\ A^t{\text{ is a self-adjoint matrix for all}\ t\in[\alpha, \beta]}  ,\lb{u1}\\
&B\in C({[\alpha,\beta]},L^{\infty}(\Omega, \C^m)),\ q\in C({[\alpha,\beta]}, L^{\infty}(\Omega, \bbR)).
\end{align}
Let us consider a family $\{\cL^t\}_{t=\alpha}^{\beta}$ of formally self-adjoint differential expressions,
\begin{equation}\lb{u4}
\cL^t:=- \text{div}A^t\nabla + B^t\nabla -\nabla\cdot\overline{B^t} +q^t,\ t\in[\alpha, \beta].
\end{equation}
The associated family of symmetric operators acting in $L^2(\Omega)$ is given by
\begin{align}
&L^tu:=\cL^t u,\ u\in\dom(L^t):=C^{\infty}_0(\Omega),\ t\in[\alpha,\beta]. \lb{u5}
\end{align}   
Each operator $L^t$ is closable in $L^2(\Omega)$, its closure is denoted by $\cL^t_{min},\ t\in[\alpha,\beta]$. 

First, we establish a natural one-to-one correspondence between the self-adjoint extensions of $\cL^t_{min}$ and the Lagrangian planes in $\bndro$. 
The Lagrangian plane $\cG_{\mathscr D_t}$ associated to a self-adjoint extension $\cL^t_{\mathscr{D}_t}$ of $\cL_{min}^t$ with the domain $\mathscr D_t\subset H^1(\Omega)$ is given by the formula
\begin{equation}
	\cG_{\mathscr D_t}=\overline{\tr_{\cL^t}(\mathscr{D}_t)}^{\bndro},\ t\in[\alpha,\beta],
\end{equation}    
where $\tr_{\cL^t}$ is a trace map defined below in \eqref{b5} by means of the differential expression $\cL^t$ from \eqref{u4}. For example, the plane corresponding to the Dirichlet Laplacian is given by $\{0\}\times H^{-1/2}(\partial\Omega)$, to the Neumann Laplacian is given by $ H^{1/2}(\partial\Omega)\times\{0\}$, and to the Robin Laplacian is given by $$\{(u, -\Theta u): u\in H^{1/2}(\partial\Omega)\}, \Theta\in \cB_{\infty}\big(H^{1/2}(\partial\Omega),H^{-1/2}(\partial\Omega)\big).$$ 

Second, we define the set of traces of the weak solutions of the corresponding homogeneous PDE with no boundary conditions by the formula
\begin{equation}\lb{u6}
{\cK}_{\lambda, t}:=\tr_{\cL^t}\{u\in H^1(\Omega): \cL^tu-\lambda u=0\},
\end{equation}
show that this plane is Lagrangian, and recast the eigenvalue problem
\begin{equation}\lb{u8}
	\cL^tu-\lambda u=0,\ u\in\mathscr{D}_t,
\end{equation}
in terms of the intersection of the Lagrangian planes $\cK_{\lambda,t}$ and $\cG_{\mathscr D_t}$.
Namely, we prove that
\begin{equation}\lb{u9}
\dim\ker(\cL_{\mathscr{D}_t}-\lambda)=\dim\big(\cK_{\lambda,t}\cap \cG_{\mathscr{D}_t}\big),\ \lambda\in\bbR,\ t\in[\alpha,\beta].
\end{equation}
Formula \eqref{u9} provides a link between the eigenvalues of $\cL^t_{\mathscr{D}_t}$ and the conjugate points of the paths formed by  the Lagrangian planes $\cK_{\lambda,t},\cG_t$ in $\bndro$.
With this at hand we show the principal result of our work, cf. Theorem \ref{w33},
\begin{equation}\lb{u7}
\mo(\cL^{\alpha}_{\mathscr D_{\alpha}})-\mo(\cL^{\beta}_{\mathscr D_{\beta}})=\mi\big((\cK_{0,t},\cG_{\mathscr {D} _t})|_{t\in[\alpha,\beta]}\big),
\end{equation}
where the Morse index $\mo(\cL_{\mathscr D_{\alpha}})$ is defined as the number of negative eigenvalues of the operator $\cL_{\mathscr D_{\alpha}}$, and $\mi\big((\cK_{0,t},\cG_{\mathscr {D} _t})|_{t\in[\alpha,\beta]}\big)$ denotes the Maslov index of the paths $\{\cK_{t}\}_{t=\alpha}^{\beta}, \{\cG_{\mathscr{ D}_t}\}_{t=\alpha}^{\beta}$ defined Section 2.1. 

The left-hand side of \eqref{u7} can be viewed as the spectral flow through zero of the eigenvalues of the operator family $\{\cL_{\mathscr{D}_t}\}_{t=\alpha}^{\beta}$. A slight generalization of  \eqref{u7} recovers the above mentioned relations between the Maslov index and the spectral flow from \cite{BbF95}, \cite{BZ2} (in case of second order operators), and between Maslov index and the Morse index from \cite{CJLS, CJM1, DJ11, JLM, JLS, LSS}.

The paper is organized as follows. Section \ref{sec2} provides the one-to-one correspondence between the self-adjoint extensions  of $\cL_{min}$, with the domains contained in $H^1(\Omega)$, and Lagrangian planes in $\bndro$. In Section \ref{section3} we derive the formula relating the Maslov and Morse indices for second order elliptic operators subject to self-adjoint boundary conditions on smooth domains. The applications of the general result are illustrated for three topics: the spectral flow formula, the spectral count in the context of geometric deformations, and the Smale-type theorem for Robin boundary conditions. In Section \ref{section4} we deal with the Maslov--Morse type formulas for the Schr\"odinger operators with matrix-valued potentials subject to self-adjoint boundary conditions on Lipschitz domains. In particular, we consider $\vec{\theta}-$periodic and non-local Robin-type boundary conditions. Finally, in Section \ref{section5} we discuss the abstract boundary triples \cite{BL, BM, Br, GG} in the context of the quotient spaces introduced in \cite{BbF95}. 

To conclude we summarize the notation used in this paper. The scalar product in a complex Hilbert space $\cH$ is denoted by $\langle\cdot,\cdot \rangle_{\cH}$. $\spec(S)$, $ \spec_{ess}(S)$, $\spec_d(S)$ denote the spectrum, the essential spectrum, the discrete spectrum of a closed operator $S$ correspondingly. The number of negative eigenvalues of $S$ is denoted by $\mo(S)$. If $\cG\subset \cH$ then $\overline{\cG}^{\cH}$  denotes the closure of $\cG$ with respect to the norm of $\cH$. The range of an operator $\cS$ acting from  a Banach space $\cX$ into a Banach space $\cY$  is denoted by $\ran(\cS)\subset\cY$, the kernel of $\cS$ is denoted by $\ker(\cS)\subset\cX$. The space of bounded operators acting from $\cX$ to $\cY$ is denoted by $\cB(\cX,\cY)$, the space of compact operators is denoted by $\cB_{\infty}(\cX,\cY)$. If $\Omega\subset\bbR^n$, then $\cD(\Omega)$ denotes the space of test functions $C_0^{\infty}(\Omega)$ equipped with the standard inductive limit topology, $\cD'(\Omega)$ denotes the dual space, the paring between $\cD(\Omega)$ and $\cD'(\Omega)$ is denoted by $_{\cD(\Omega)}\langle\cdot,\cdot \rangle_{\cD'(\Omega)}$. Duality pairing between $H^{1/2}(\partial\Omega)$ and $H^{-1/2}(\partial\Omega)$ is denoted by $\langle\cdot,\cdot\rangle_{-1/2}$. 
\section{Self-adjoint extensions and Lagrangian planes}\lb{sec2}
In this section we focus on a one-to-one correspondence between self-adjoint extensions of second order elliptic operators on bounded domains in $\R^n$ and Lagrangian subspaces in $\bndr$. 

\subsection{Assumptions} In this subsection we state our main assumptions and recall some known facts.
\begin{hypothesis}\lb{d2}
Let $n\in\bbN, n\geq 2$ and assume that $\Omega\subset \R^n$ is a bounded Lipschitz domain. 
\end{hypothesis}
To set the stage, we introduce a formally self-adjoint differential expression
\begin{equation}\lb{d1}
\cL :=-\sum_{j,k=1}^{n} \partial_j A_{jk}\partial_k + \sum_{j=1}^{n}A_j\partial_j -\partial_j \overline{A_j}^{\top}+A,
\end{equation}
where bar means complex conjugation, ${\top}$ means matrix transposition, and the coefficients satisfy the following standard assumptions, see, e.g., \cite[Chapter 4]{Mc}.
\begin{hypothesis}\lb{l1} Let $\Omega\subset\R^n$ be a bounded open set, and assume that
\begin{align}
&A_{jk}=\big\{a^{jk}_{pq}\big\}_{p,q=1}^m\in L^{\infty}(\overline{\Omega}, \C^{m\times m}),\, A_{jk}=\overline{A_{jk}}^{\top},\,1\leq j,k\leq n,  \no\\
&A_{j}=\big\{a^{j}_{pq}\big\}_{p,q=1}^m\in L^{\infty}(\overline{\Omega}, \C^{m\times m}),  1\leq j\leq n,\no\\
&A_{jk},\ A_{j} \text{\ are Lipschitz functions on}\ \overline{\Omega},\ 1\leq j,k\leq n,\no\\
&A=\big\{a_{pq}\big\}_{p,q=1}^m\in L^{\infty}(\overline{\Omega}, \C^{m\times m}),\,  A=\overline{A}^{\top}.\no
\end{align}
\end{hypothesis}

The differential expression $\cL$ acts on a vector-valued function $u\in C^{\infty}(\Omega, \C^m)$ as follows
\begin{align}
&\big((\cL u)(x)\big)_p=-\sum_{j,k=1}^{n} \sum_{q=1}^{m}\partial_j\{ a^{jk}_{pq}(x)\partial_k u_q(x)\} +\sum_{j=1}^{n} \sum_{q=1}^{m}{a^j_{pq}(x)}\partial_ju_q(x)  \\
&\quad - \sum_{q=1}^{m}\partial_j \{\overline {a}^j_{qp}(x)u_q(x)\}+\sum_{q=1}^{m}a_{pq}(x)u_q(x),\, \text{a.e.}\ x\in\Omega,\, 1\leq p\leq m,
\end{align}
where $(v)_p$ denotes the $p-$th coordinate of a vector $v\in\C^m$.
The sesquilinear form associated with $\cL$ is given by
\begin{align}
\begin{split}
\mathfrak{l}[u,v]&=\sum_{j,k=1}^{n}\langle A_{jk}\partial_ku,\partial_j v\rangle_{L^2(\Omega,\C^m)}+\sum_{j=1}^{n}\langle A_j\partial_ju,v\rangle_{L^2(\Omega,\C^m)}\lb{cc1}\\
&\quad +\sum_{j=1}^{n}\langle u,A_j\partial_jv\rangle_{L^2(\Omega,\C^m)}+\langle Au,v\rangle_{L^2(\Omega,\C^m)},\ u,v\in H^1(\Omega,\C^m).
\end{split}
\end{align}

We seek to establish a one-to-one correspondence between self-adjoint extensions of $\cL:C_0^{\infty}(\Omega,\mathbb C^m)\subset L^2(\Omega,\mathbb C^m)\rightarrow L^2(\Omega,\mathbb C^m)$ and Lagrangian planes in $\bndr$ employing the second Green identity. To this end, we recall
that by the standard trace theorem (cf., e.g., \cite[Theorem 3.38]{Mc}) the linear mapping
\begin{equation}\lb{e8}
\gamma_{D}^0: C(\overline{\Omega},\C^m)\rightarrow C(\partial\Omega,\C^m),\ \ \gamma_{D}^0u=u_{|_{\partial\Omega}},
\end{equation}
can be extended by continuity and considered as a linear bounded operator,
\begin{equation}\lb{e9}
\gaD \in \cB(H^s(\Omega,\C^m), H^{(s-\frac12)}(\partial\Omega,\C^m)),\, 1/2<s<3/2;
\end{equation} 
in addition (cf. \cite[Lemma A.4]{GM08}),
\begin{equation}\lb{aa6}
\gaD \in \cB(H^{(3/2)+\varepsilon}(\Omega,\C^m), H^{1}(\partial\Omega,\C^m)),\, \varepsilon>0.
\end{equation}
The conormal derivative corresponding to the differential expression $\cL$ is given by
\begin{align}
&{\gaN^{\cL,2}}u:=\sum_{j,k=1}^{n} A^{jk}\nu_j\gaD(\partial_k u)+\sum_{j=1}^{n}\overline{A_j}^{\top}\nu_j \gaD u,\ u\in H^2(\Omega,\C^m),\lb{e2}
\end{align}
with $\nu=(\nu_1,\cdots,\, \nu_n)$ denoting the outward unit normal on $\partial\Omega$. Setting $\varepsilon=1/2$ in \eqref{aa6} and using \eqref{e2}, we introduce the trace map
\begin{equation}\lb{e6}
\tr_{\cL,2}:
\begin{cases}
H^2(\Omega,\C^m)\rightarrow H^{1}(\partial\Omega,\C^m)\times L^2(\partial\Omega,\C^m), \\
\hspace*{0.759cm} u\mapsto (\gaD u, {\gaN^{\cL,2}}).
\end{cases}
\end{equation}

We introduce the function space 
\begin{align}
& D^{s}_{\cL}(\Omega):=\{u\in H^s(\Omega,\mathbb C^m): \cL u\in L^2(\Omega,\mathbb C^m)\},\ s\geq 0,\lb{cc4}
\end{align}
equipped with the graph norm of $\cL$,
\begin{equation}\lb{e1}
\|u\|_{\cL,s}:=\left(\|u\|_{H^s({\Omega},\C^m)}^2+\|\cL u\|_{L^2({\Omega,\C^m})}^2\right)^{1/2}, 
\end{equation}
where $\cL u$ should be understood in the sense of distributions.
Next, we recall the extension of $\tr_{\cL,2}$ defined on $D^{1}_{\cL}(\Omega)$ and the first and second Green identities.
	
\begin{proposition}\cite[Lemma 4.3]{Mc}\lb{e10} Assume Hypothesis \ref{d2}. Then the operator
\begin{equation}\lb{cc6}
{\gaN^{\cL,2}}:H^2(\Omega,\C^m)\rightarrow L^2(\partial\Omega,\C^m),\ u\in H^2(\Omega,\C^m),
\end{equation}	
can be extended to a bounded, linear operator $\gamma_N^{\cL}\in\cB\big(\cD^1_{\cL}(\Omega), H^{-1/2}(\partial\Omega,\C^m)\big)$. Moreover, the first Green identity holds, that is,
\begin{align}
&\mathfrak{l}[u,v]=\langle\cL u, v\rangle_{L^2(\Omega,\C^m)}+\langle {\gaN^{\cL}}u,\gaD v\rangle_{-1/2},\lb{e14}
\end{align}
for all $u\in\cD^1_{\cL}(\Omega), v\in H^1{(\Omega,\C^m)}$.
\end{proposition}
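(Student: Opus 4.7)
The plan is to extend $\gamma_N^{\cL,2}$ by duality, using the first Green identity on smooth test functions as the defining relation, and then verify continuity using the graph norm on $D^1_{\cL}(\Omega)$.

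First, I would establish the smooth version of the identity: for $u \in H^2(\Omega,\C^m)$ and $v \in H^1(\Omega,\C^m)$, the classical integration by parts on Lipschitz domains (as in \cite[Theorem 3.34]{Mc}) applied to each of the four types of terms in $\mathfrak{l}[u,v]$ gives
\begin{equation*}
\mathfrak{l}[u,v]=\langle\cL u,v\rangle_{L^2(\Omega,\C^m)}+\langle \gamma_N^{\cL,2}u,\gaD v\rangle_{L^2(\partial\Omega,\C^m)},
\end{equation*}
where each $A_{jk}\partial_ku$ contributes $\nu_jA_{jk}\partial_ku$ to the conormal trace, and each $\overline{A_j}^{\top}u$ contributes $\nu_j\overline{A_j}^{\top}\gaD u$; this is exactly the definition \eqref{e2}. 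Here the boundary pairing is the $L^2$-inner product because both traces lie in $L^2(\partial\Omega,\C^m)$.

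Second, I would define the extension by duality. Fix a bounded right inverse $E\in \cB(H^{1/2}(\partial\Omega,\C^m),H^1(\Omega,\C^m))$ of the surjective trace map $\gaD|_{H^1}$ (such an extension operator exists on Lipschitz domains by \cite[Theorem 3.37]{Mc}). For $u\in D^1_{\cL}(\Omega)$ define the linear functional
\begin{equation*}
\ell_u(\phi):=\mathfrak{l}[u,E\phi]-\langle \cL u,E\phi\rangle_{L^2(\Omega,\C^m)},\quad \phi\in H^{1/2}(\partial\Omega,\C^m).
\end{equation*}
From \eqref{cc1} one has $|\mathfrak{l}[u,E\phi]|\lesssim \|u\|_{H^1(\Omega,\C^m)}\|E\phi\|_{H^1(\Omega,\C^m)}$, and the $L^2$-pairing is bounded by $\|\cL u\|_{L^2}\|E\phi\|_{L^2}$, so $|\ell_u(\phi)|\le C\|u\|_{\cL,1}\|\phi\|_{H^{1/2}(\partial\Omega,\C^m)}$; hence $\ell_u$ determines an element $\gamma_N^{\cL}u\in H^{-1/2}(\partial\Omega,\C^m)$ with $\gamma_N^{\cL}\in\cB(D^1_{\cL}(\Omega),H^{-1/2}(\partial\Omega,\C^m))$.

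Third, I would check that the construction is intrinsic, i.e., depends only on $\phi=\gaD v$ and not on the choice of the extension $Ev$. Equivalently, the functional $v\mapsto \mathfrak{l}[u,v]-\langle \cL u,v\rangle$ must vanish on $H^1_0(\Omega,\C^m)=\ker(\gaD|_{H^1})$. For $v\in C_0^\infty(\Omega,\C^m)$ this holds by the very definition of the distributional action of $\cL$ on $u$: each term of $\mathfrak{l}[u,v]$ is matched by the corresponding term of $\langle \cL u,v\rangle$ after moving one derivative onto $v$ via the $\cD(\Omega)$-$\cD'(\Omega)$ pairing. Density of $C_0^\infty(\Omega,\C^m)$ in $H^1_0(\Omega,\C^m)$ together with the continuity estimates above then extends this to all of $H^1_0(\Omega,\C^m)$. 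Replacing $E\phi$ by any other $v\in H^1(\Omega,\C^m)$ with $\gaD v=\phi$ changes the formula by an element of $H^1_0(\Omega,\C^m)$ and hence does not change $\ell_u(\phi)$. This yields the general first Green identity \eqref{e14}. Finally, for $u\in H^2(\Omega,\C^m)$ the smooth identity of the first step shows $\gamma_N^{\cL}u$ coincides with $\gamma_N^{\cL,2}u$ viewed in $H^{-1/2}(\partial\Omega,\C^m)$, so $\gamma_N^{\cL}$ is indeed an extension.

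The main technical point to be careful about is the third step: one must show that the functional annihilates $H^1_0(\Omega,\C^m)$ using only that $\cL u\in L^2(\Omega,\C^m)$ in the distributional sense, even though $u$ is merely $H^1$; this is where the Lipschitz-domain density of $C_0^\infty$ in $H^1_0$ and the bilinearity estimates on $\mathfrak{l}$ work together. Once this is in place, the boundedness and the Green identity follow routinely.
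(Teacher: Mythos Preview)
Your argument is correct and is precisely the standard duality construction of the weak conormal derivative. Note, however, that the paper does not supply its own proof of this proposition: it is quoted directly from \cite[Lemma~4.3]{Mc}, so there is no in-paper proof to compare against. Your three-step outline (smooth Green identity, definition of $\gamma_N^{\cL}u$ via a bounded right inverse of $\gaD$, and well-definedness by showing the functional annihilates $H^1_0$) is exactly the approach taken in McLean's book, so in effect you have reconstructed the cited proof.
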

\begin{proposition}\cite[Theorem 4.4 ({\it iii})]{Mc}\lb{e12} Assume Hypothesis \ref{d2}. Then the second Green identity holds, that is, 
\begin{align}
\langle\cL u, v\rangle_{L^2(\Omega,\C^m)}- \langle u,\cL v\rangle_{L^2(\Omega,\C^m)}=\overline{\langle{\gaN^{\cL}} v ,\gaD u \rangle}_{{-1/2}}-\langle{\gaN^{\cL}} u ,\gaD v \rangle_{{-1/2}},\lb{e13}
\end{align}
for all $u,v\in\cD^1_{\cL}(\Omega)$.
\end{proposition}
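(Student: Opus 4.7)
The plan is to derive the second Green identity from the first Green identity stated in Proposition \ref{e10} by applying it twice, once to the pair $(u,v)$ and once with the roles of $u$ and $v$ reversed, and then subtracting (after taking a complex conjugate) to cancel the sesquilinear form. This requires me to first observe that the form $\mathfrak{l}$ defined in \eqref{cc1} is Hermitian symmetric, i.e., $\mathfrak{l}[u,v]=\overline{\mathfrak{l}[v,u]}$ for $u,v\in H^1(\Omega,\C^m)$. This is where the formal self-adjointness of $\cL$ enters: the pointwise identities $A_{jk}=\overline{A_{jk}}^{\top}$ and $A=\overline{A}^{\top}$ handle, respectively, the second-order and zero-order terms, while the two middle terms $\sum_j \langle A_j\partial_j u,v\rangle$ and $\sum_j \langle u,A_j\partial_j v\rangle$ are conjugates of each other by inspection. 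This symmetry is precisely what was built into the expression \eqref{d1} by writing the first-order part as $\sum_j(A_j\partial_j - \partial_j\overline{A_j}^{\top})$ rather than as a plain drift.

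Next, I would invoke Proposition \ref{e10} twice. Since $\cD^1_{\cL}(\Omega)\subset H^1(\Omega,\C^m)$, both $u$ and $v$ are admissible test functions in either slot of \eqref{e14}, yielding
\begin{align*}
\mathfrak{l}[u,v]&=\langle\cL u,v\rangle_{L^2(\Omega,\C^m)}+\langle\gaN^{\cL}u,\gaD v\rangle_{-1/2},\\
\mathfrak{l}[v,u]&=\langle\cL v,u\rangle_{L^2(\Omega,\C^m)}+\langle\gaN^{\cL}v,\gaD u\rangle_{-1/2}.
\end{align*}
Taking the complex conjugate of the second line gives $\overline{\mathfrak{l}[v,u]}=\langle u,\cL v\rangle_{L^2(\Omega,\C^m)}+\overline{\langle\gaN^{\cL}v,\gaD u\rangle}_{-1/2}$. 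Subtracting this from the first line and using the Hermitian symmetry $\mathfrak{l}[u,v]=\overline{\mathfrak{l}[v,u]}$ eliminates the left-hand side and produces exactly identity \eqref{e13} after trivial rearrangement.

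The main obstacle is essentially bookkeeping rather than analysis: I must be confident that the duality pairings $\langle\gaN^{\cL}u,\gaD v\rangle_{-1/2}$ are well-defined as complex numbers for arbitrary $u,v\in\cD^1_{\cL}(\Omega)$, which follows from $\gaN^{\cL}\in\cB(\cD^1_{\cL}(\Omega),H^{-1/2}(\partial\Omega,\C^m))$ established in Proposition \ref{e10} together with the standard trace mapping property $\gaD\in\cB(H^1(\Omega,\C^m),H^{1/2}(\partial\Omega,\C^m))$ recorded in \eqref{e9}. The only real verification left is that of Hermitian symmetry of $\mathfrak{l}$, and this reduces to checking the four summands in \eqref{cc1} individually against the self-adjointness hypotheses on $A_{jk}$ and $A$ in Hypothesis \ref{l1}; no regularization or density argument is required, since the first Green identity of Proposition \ref{e10} is already stated at the level of generality we need.
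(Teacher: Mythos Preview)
Your argument is correct and is exactly the standard derivation of the second Green identity from the first; the paper itself does not supply a proof but simply cites \cite[Theorem 4.4 (iii)]{Mc}, so there is nothing further to compare. One small remark: when verifying the Hermitian symmetry of $\mathfrak{l}$, the second-order term requires not just that each $A_{jk}$ is Hermitian but that $A_{jk}=A_{kj}^{*}$ (compare the scalar condition $a^t_{jk}=\overline{a^t_{kj}}$ in Hypothesis~\ref{q1}); this is implicit in the phrase ``formally self-adjoint'' and is what McLean assumes, so the computation goes through.
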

The trace operator 
\begin{equation}\lb{b5}
\tr_{\cL}\in\cB\big(\cD^1_{\cL}(\Omega), \bndr\big),\ \tr_{\cL}:u\mapsto (\gaD u, {\gaN^{\cL}} u),
\end{equation}
is compatible with \eqref{e6}. We notice that, it follows from the unique continuation principle, \cite[Theorem 3.2.2]{Is}, that 
\begin{equation}\lb{UCP}
\ker\{\tr_{\cL}\}\cap\{u\in H^1(\Omega): \cL u=0\  \text{in}\ (H^1_0(\Omega))^*  \}=\{0\}.
\end{equation}

Next we turn to a symmetric operator acting in $L^2(\Omega,\C^m)$ and associated with differential expression \eqref{d1}.
\begin{proposition} \lb{w3}Assume Hypotheses \ref{d2} and \ref{l1}. Then the linear operator defined by 
\begin{equation}\lb{dd1}
Lf:=\cL f,\ f\in\dom(L):=C_0^{\infty}(\Omega),
\end{equation}	
and considered in $L^2(\Omega,\C^m)$ is closable. Its closure $\cL_{min}$ is densely defined symmetric operator in $L^2(\Omega,\C^m)$. Moreover, the linear operator acting in $L^2(\Omega,\C^m)$ and given by 
\begin{equation}\lb{dd2}
	\cL_{max}u:=\cL u,\ u\in\dom(\cL_{max}):=\{u\in L^2(\Omega,\C^m): \cL u\in L^2(\Omega,\C^m)\},
\end{equation}
$($where $\cL u$ is defined is the sense of distributions$)$ is adjoint to $\cL_{min}$, i.e.,
\begin{equation}\lb{e18}
(\cL_{min})^*=\cL_{max}.
\end{equation}
\end{proposition}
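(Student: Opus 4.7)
The plan is the standard three-step argument: density plus symmetry on test functions yields closability, after which the equality $(\cL_{min})^* = \cL_{max}$ reduces to identifying the distributional action of $\cL$ with the functional-analytic adjoint of $L$.

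First I would observe that $L$ is densely defined because $C_0^\infty(\Omega,\C^m)$ is dense in $L^2(\Omega,\C^m)$. To prove symmetry I would apply the second Green identity \eqref{e13} to $u, v \in C_0^\infty(\Omega,\C^m) \subset \cD^1_{\cL}(\Omega)$: both $\gaD$- and $\gaN^{\cL}$-traces vanish on test functions, so the right-hand side of \eqref{e13} collapses and we obtain $\langle \cL u, v\rangle_{L^2} = \langle u, \cL v\rangle_{L^2}$. This shows $L \subset L^*$; since $L^*$ is densely defined, $L$ is closable and its closure $\cL_{min} := L^{**}$ is closed, densely defined, symmetric, and satisfies $(\cL_{min})^* = L^*$.

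Next I would prove $L^* = \cL_{max}$ by showing both inclusions, relying on the distributional definition of $\cL u$ for $u \in L^2$. Under Hypothesis \ref{l1}, $\cL \varphi \in L^2(\Omega,\C^m)$ whenever $\varphi \in C_0^\infty(\Omega,\C^m)$, and the formal self-adjointness of $\cL$ (recorded in \eqref{e13} with no boundary contribution) delivers the key identity
\[
\langle u, \cL\varphi\rangle_{L^2} = \langle \cL u, \varphi\rangle_{L^2}, \qquad u \in L^2(\Omega,\C^m),\ \varphi \in C_0^\infty(\Omega,\C^m),
\]
where $\cL u$ is interpreted a priori in $\cD'(\Omega)$. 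If $u \in \dom(\cL_{max})$, i.e.\ $\cL u \in L^2$, this identity says $u \in \dom(L^*)$ with $L^* u = \cL u$. Conversely, if $u \in \dom(L^*)$ with $w := L^* u \in L^2$, the identity together with uniqueness of distributional derivatives forces $\cL u = w$ in $\cD'(\Omega)$, and hence $u \in \dom(\cL_{max})$. Combining the two inclusions yields \eqref{e18}.

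The argument has no real obstacle---it is a classical Birman--Krein-type bookkeeping exercise---but the one subtle point is the displayed identity: one must carefully verify that the formal self-adjointness of the expression \eqref{d1} (ensured by $A_{jk} = \overline{A_{jk}}^{\top}$, $A = \overline{A}^{\top}$, and the symmetric placement $A_j\partial_j - \partial_j \overline{A_j}^{\top}$ of the first-order terms under Hypothesis \ref{l1}) transports correctly between the Hermitian $L^2$ pairing $\langle\cdot,\cdot\rangle_{L^2}$ and the bilinear $\cD'$--$\cD$ pairing, with all complex conjugations tracked consistently. The Lipschitz regularity of the coefficients assumed in Hypothesis \ref{l1} ensures $\cL\varphi \in L^2(\Omega,\C^m)$ for $\varphi \in C_0^\infty(\Omega,\C^m)$, so both sides of the identity are well defined.
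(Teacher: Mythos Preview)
Your proof is correct and follows essentially the same approach as the paper: symmetry via the second Green identity on test functions, then both inclusions $L^* \subset \cL_{max}$ and $\cL_{max} \subset L^*$ via the distributional identity $\langle u, \cL\varphi\rangle_{L^2} = \langle \cL u, \varphi\rangle_{L^2}$. Your use of $(\cL_{min})^* = L^*$ is slightly cleaner than the paper's version, which proves $\cL_{max} \subset (\cL_{min})^*$ by first establishing the identity for $\varphi \in C_0^\infty$ and then approximating arbitrary $f \in \dom(\cL_{min})$ by test functions---an approximation step your observation renders unnecessary.
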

\begin{proof} Using the second Green identity \eqref{e13} with arbitrary $u,v\in C^{\infty}_0(\Omega)$ and noticing that $\tr_{\cL}u=\tr_{\cL}v=0$ we arrive at 
\begin{equation}\lb{dd5}
\langle\cL u, v\rangle_{L^2(\Omega,\C^m)}=\langle u,\cL v\rangle_{L^2(\Omega,\C^m)},\ \text{for all}\ u,v\in \dom(L).
\end{equation}
Hence, $L\subset L^*$ that is $L$ is symmetric in $L^2(\Omega,\C^m)$, consequently it is closable. 

Next, we turn to \eqref{e18}. Let us show  $(\cL_{min})^*\subset\cL_{max}$. Pick any $f\in \dom \left((\cL_{min})^*\right)$, then $g=(\cL_{min})^*f\in L^2(\Omega,\C^m)$, and for arbitrary $\psi\in C_0^{\infty}(\Omega)\subset\dom\left(\cL_{min}\right)$ one has
\begin{equation}\lb{dd6}
{}_{\cD(\Omega)} \langle\psi, \cL f\rangle_{\cD'(\Omega)}=\langle\cL \psi, f\rangle_{L^2(\Omega,\C^m)}=\langle \psi, g\rangle_{L^2(\Omega,\C^m)}.
\end{equation}
Therefore, $g=\cL f$ in distributional sense and $\cL f\in L^2(\Omega,\C^m)$ as required. In order to show the opposite inclusion we notice that 
\begin{equation}\lb{dd7}
\langle \cL \varphi, g\rangle_{L^2(\Omega,\C^m)}=\langle \varphi, \cL g\rangle_{L^2(\Omega,\C^m)},\ \text{for all}\ \varphi\in C_0^{\infty}(\Omega), g\in\dom(\cL_{max}).
\end{equation}
Whenever $f\in\dom(\cL_{min})$, there exists a sequence $\{\varphi_\ell, \ell\geq 1\}\subset C_0^{\infty}(\Omega)=\dom(L)$, such that
\begin{equation}
\lim\limits_{\ell\rightarrow\infty}\varphi_{\ell}=f\ \ \text{and}\ \lim\limits_{\ell\rightarrow\infty}\cL\varphi_{\ell}=\cL f \ \ (\text{in}\ L^2(\Omega,\C^m)).
\end{equation}
Plugging $\varphi_{\ell}$ in \eqref{dd7} and passing to limit as $\ell\rightarrow\infty$, one obtains
\begin{equation}\lb{dd8}
\langle \cL f, g\rangle_{L^2(\Omega,\C^m)}=\langle f, \cL g\rangle_{L^2(\Omega,\C^m)},\ \text{for all}\ f\in \dom(\cL_{min}),\, g\in\dom(\cL_{max}).
\end{equation}
Thus, $\cL_{max}\subset(\cL_{min})^*$, and the proof is completed. 
\end{proof}
\begin{hypothesis}\lb{bb1} Assume Hypotheses \ref{d2} and \ref{l1}. Suppose that the deficiency indices of $\cL_{min}$ are equal, that is,
\begin{equation*}
\dim\ker(\cL_{max}-\bfi)=\dim\ker(\cL_{max}+\bfi).
\end{equation*} In addition:
			
(i)	assume that $\ran(\tr_{\cL})$ is dense in $\bndr$,

(ii) assume that $D^1_{\cL}(\Omega)$ is dense in $\cD^0_{\cL}(\Omega)$.
\end{hypothesis}
\begin{remark}\lb{yy1}
In the sequel we consider two special cases. In the  first case, the coefficients $A_{jk}, A_j, A$ of a uniformly elliptic operator $\cL$ from \eqref{d1} are scalar-valued and defined on domains with smooth boundary, cf. Hypothesis \ref{q1} below. In this scenario both parts of Hypothesis \ref{bb1} are satisfied. Indeed, by \cite[Proposition 2.1]{Gr}, \cite[Section 4.3]{BM},
\begin{equation}\no
\ran(\tr_{\cL,2})=H^{3/2}(\partial\Omega)\times H^{1/2}(\partial\Omega),
\end{equation}
and the right-hand side is dense in $\bndro$. Furthermore, by \cite[Theorem 3.2]{Gr}, $H^2(\Omega)$ is dense in $\cD^s_{\cL}(\Omega), s<2$, hence $D^1_{\cL}(\Omega)$ is dense in $\cD^0_{\cL}(\Omega)$. In the second case, the coefficients are given by $A_{jk}=\delta_{jk}I_m$,  where $\delta_{jk}$ denotes the Kronecker delta, $A_j=0_n, 1\leq j,k\leq n$, and $A=V$, that is, we deal with the Schr\"odinger operator $\cL=-\Delta+V$ with a matrix potential, cf. Section \ref{section4}. The domain $\Omega$ in this case is assumed to be Lipschitz. Then, using auxiliary spaces of distributions on $\partial\Omega$, cf. \cite{GM08}, we show in Proposition \ref{ff8} that both {\it (i)} and {\it (ii)} from the Hypothesis \ref{bb1} are verified. While a detailed analysis of Hypothesis \ref{bb1} is of independent interest (cf., e.g., \cite{BG}, \cite{DM}, \cite{Ge}, \cite{GM10}, \cite{GM08}) and barely touched upon in the present paper, we stress that in all our applications the assumptions of Hypothesis \ref{bb1} are satisfied.
\end{remark}


\subsection{The Lagrangian planes and the self-adjoint extensions of differential operators}
Let us introduce a complex symplectic bilinear form
\begin{align}
\begin{split}
&\omega: \bndr\rightarrow \C\times\C,\lb{dd9}\\
&\omega((f_1,g_1), (f_2,g_2))=\overline{\langle g_2 ,f_1 \rangle}_{{-1/2}}-\langle g_1,f_2 \rangle_{{-1/2}},\\
&(f_1,g_1), (f_2,g_2)\in\bndr.
\end{split}
\end{align} 
Then the second Green identity \eqref{e13} reads as follows
\begin{equation}\lb{bb88}
\langle\cL u_0, v\rangle_{L^2(\Omega,\C^m)}- \langle u_0,\cL v\rangle_{L^2(\Omega,\C^m)}=\omega\left((\gaD u_0,{\gaN^{\cL}} u_0), (\gaD v,{\gaN^{\cL}} v)\right),
\end{equation}
for all $u,v\in \cD_{\cL}^1(\Omega)$. 
We recall that the annihilator of $\cF\subset \bndr$ is defined by 
\begin{align}
&\cF^{\circ}:=\{(f,g)\in\bndr|\no\\
&\hspace{4cm}\omega((f,g), (\phi, \psi))=0,\text{\,for all\,} (\phi, \psi)\in \cF\}.\no
\end{align}
The subspace $\cF$ is said to be isotropic if $\cF\subset \cF^{\circ}$, co-isotropic if $\cF^{\circ}\subset \cF$, $\cF$ is called Lagrangian if it is simultaneously isotropic and co-isotropic. Furthermore, $\cF$ is Lagrangian if and only if it is maximally isotropic, cf., e.g., \cite{F04}.

The principal goal of this section is to identify self-adjoint extensions of $\cL_{min}$, whose domains are subsets of $H^1(\Omega,\C^m)$, with the Lagrangian subspaces in $\bndr$.  We recall notation \eqref{b5}.

\begin{theorem}\lb{l4} Assume Hypothesis \ref{bb1}. Then the self-adjoint extensions of $\cL_{min}$ whose domains are contained in $H^1(\Omega)$ are in one-to-one correspondence with Lagrangian planes in $\bndr$, that is, the following two assertions hold.

1. Let $\mathscr D\subset\cD^1_{\cL}(\Omega)$, and let $\cL_{\mathscr D}$ be the linear operator acting in $L^2(\Omega, \C^m)$ and given by the formula
\begin{equation}
\cL_{\mathscr D}f:=\cL_{max} f,\ f\in \dom(\cL_{\mathscr D}):=\mathscr D.
\end{equation}
If $\cL_{\mathscr D}$ is self-adjoint, then the set
\begin{equation}
	\cG_{\mathscr D}:=\overline{\tr_{\cL}(\mathscr D)}^{\bndr},
\end{equation}
is a Lagrangian plane in $\bndr$ with respect to symplectic form $\omega$ defined in \eqref{dd9}. 

2. A Lagrangian plane $\cG\subset \bndr$ defines a self-adjoint extension of $\cL_{min}$. Namely, the linear operator $\cL_{\tr_{\cL}^{-1}(\cG)}$ acting in $L^2(\Omega)$ and given by the formula
\begin{equation}
\cL_{\tr_{\cL}^{-1}(\cG)}f:=\cL_{max} f,\ f\in \dom\left(\cL_{\tr_{\cL}^{-1}(\cG)}\right):={\tr_{\cL}^{-1}(\cG)},
\end{equation}
is essentially self-adjoint, here $\tr_{\cL}^{-1}(\cG)$ denotes the preimage of $\cG$, that is,
\begin{equation*}
\tr_{\cL}^{-1}(\cG):=\{u\in\cD_{\cL}(\Omega)\ :\ \tr_{\cL}u\in\cG\}.
\end{equation*}
\end{theorem}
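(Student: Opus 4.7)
The engine of the proof is the second Green identity \eqref{bb88}, which translates the skew-symmetric defect of $\cL$ on $\cD^1_{\cL}(\Omega)$ into the symplectic pairing $\omega$ on $\bndr$. This dictionary converts symmetry of a restriction of $\cL_{max}$ into $\omega$-isotropy of its trace, and self-adjointness into Lagrangian (maximally isotropic) character. The density statements in Hypothesis \ref{bb1} supply the analytic content needed to promote these infinitesimal identifications to the stated one-to-one correspondence.

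\textbf{Part 1.} The inclusion $\cG_{\mathscr D}\subset\cG_{\mathscr D}^{\circ}$ is immediate from \eqref{bb88} applied to $u_1,u_2\in\mathscr D$, using $(\cL_{\mathscr D})^*=\cL_{\mathscr D}$, and passed to the closure by continuity of $\omega$ on $\bndr\times\bndr$. For the reverse inclusion, I take $(f,g)\in\cG_{\mathscr D}^{\circ}$ and, by Hypothesis \ref{bb1}(i), select $w_n\in\cD^1_{\cL}(\Omega)$ with $\tr_{\cL}w_n\to(f,g)$. With $\lambda\in\bbC\setminus\bbR$ (so $\lambda\notin\spec(\cL_{\mathscr D})$) I set $\widetilde w_n:=w_n-(\cL_{\mathscr D}-\lambda)^{-1}(\cL w_n-\lambda w_n)$, which lies in $\ker(\cL_{max}-\lambda)\cap H^1(\Omega)$ and differs from $w_n$ by an element of $\mathscr D$, so $\tr_{\cL}\widetilde w_n-\tr_{\cL}w_n\in\cG_{\mathscr D}$. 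The unique continuation principle \eqref{UCP} makes $\tr_{\cL}$ injective on $\ker(\cL_{max}-\lambda)\cap H^1(\Omega)$, which I combine with a compactness/closed-graph argument to extract a limit $\widetilde w\in\ker(\cL_{max}-\lambda)\cap H^1(\Omega)$ satisfying $\tr_{\cL}\widetilde w\equiv(f,g)$ modulo $\cG_{\mathscr D}$. The annihilator condition on $(f,g)$ combined with Green then certifies $\widetilde w\in\dom((\cL_{\mathscr D})^*)=\mathscr D$, placing $(f,g)\in\cG_{\mathscr D}$.

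\textbf{Part 2 and the main obstacle.} For a Lagrangian $\cG\subset\bndr$, symmetry of $\cL_{\mathscr D}$ with $\mathscr D=\tr_{\cL}^{-1}(\cG)$ follows from Green and isotropy of $\cG$. Essential self-adjointness reduces to showing $\dom((\cL_{\mathscr D})^*)\subset\overline{\mathscr D}$ in graph norm: for $w\in\dom((\cL_{\mathscr D})^*)\subset\cD^0_{\cL}(\Omega)$, Hypothesis \ref{bb1}(ii) provides $w_n\in D^1_{\cL}(\Omega)$ with $w_n\to w$ in graph norm; Green applied to $w_n$ and $u\in\mathscr D$ gives $\omega(\tr_{\cL}u,\tr_{\cL}w_n)\to 0$ for each $u\in\mathscr D$, and the co-isotropy $\cG=\cG^{\circ}$ forces the limiting boundary data of $w_n$ to lie in $\cG$, so a correction of $w_n$ by a suitable null-trace sequence in $\mathscr D$ places the adjusted sequence in $\mathscr D$. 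The main technical obstacle in both parts is that $\tr_{\cL}$ is only densely valued in $\bndr$ and not necessarily surjective, so limiting boundary data in the annihilator need not arise as genuine traces; this is circumvented in Part 1 by the resolvent correction, which pushes the approximants into an invariant eigenspace where \eqref{UCP} restores trace injectivity, and in Part 2 by the graph-density Hypothesis \ref{bb1}(ii), which permits Green's identity to be applied to elements of $\dom(\cL_{max})$ that are only graph-limits of $H^1(\Omega)$ functions.
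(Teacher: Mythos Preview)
Your overall dictionary (Green identity $\leftrightarrow$ symplectic form, self-adjointness $\leftrightarrow$ Lagrangian) is correct, but both parts contain a genuine gap at the limiting step, and this gap is exactly what forces the paper to argue differently.

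\textbf{Part 1.} Your resolvent correction produces $\widetilde w_n\in\ker(\cL_{max}-\lambda)\cap H^1(\Omega)$ with $\tr_{\cL}\widetilde w_n-\tr_{\cL}w_n\in\tr_{\cL}(\mathscr D)\subset\cG_{\mathscr D}$. You know $\tr_{\cL}w_n\to(f,g)$, but you have \emph{no control whatsoever} over $\cL w_n-\lambda w_n$ in $L^2(\Omega)$: Hypothesis \ref{bb1}(i) only gives density of the \emph{range} of $\tr_{\cL}$, not any bound on the preimages. Hence the correction term $(\cL_{\mathscr D}-\lambda)^{-1}(\cL w_n-\lambda w_n)$, and therefore $\tr_{\cL}\widetilde w_n$, need not be Cauchy in any topology. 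The phrase ``compactness/closed-graph argument'' hides a step that simply does not follow from the stated hypotheses. Even granting an a priori estimate $\|u\|_{H^1}\lesssim\|\tr_{\cL}u\|$ on $\ker(\cL_{max}-\lambda)\cap H^1$ (which you would have to prove for non-real $\lambda$), you would still need $\tr_{\cL}\widetilde w_n$ itself to converge, and it need not.

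\textbf{Part 2.} Here the problem is sharper. For $w\in\dom((\cL_{\mathscr D})^*)\subset\cD^0_{\cL}(\Omega)$ you approximate in graph norm by $w_n\in\cD^1_{\cL}(\Omega)$ and note that $\omega(\tr_{\cL}u,\tr_{\cL}w_n)\to 0$ for each $u\in\mathscr D$. But $\tr_{\cL}$ is bounded only from $\cD^1_{\cL}(\Omega)$, not from $\cD^0_{\cL}(\Omega)$, into $\bndr$; graph-norm convergence of $w_n$ gives you nothing about convergence of $\tr_{\cL}w_n$, so there is no ``limiting boundary data'' to which you can apply co-isotropy of $\cG$. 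The subsequent ``correction by a null-trace sequence'' is therefore undefined.

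\textbf{How the paper avoids this.} The paper never tries to pass traces to a limit. In Part 1 it proves only the \emph{intermediate} inclusion $\cG_{\mathscr D}^{\circ}\cap\tr_{\cL}(\cD^1_{\cL}(\Omega))\subset\cG_{\mathscr D}$ (where the element is already an honest trace, so Green applies directly and gives membership in $\dom((\cL_{\mathscr D})^*)=\mathscr D$), and then invokes the purely algebraic annihilator identity
\[
\cG_{\mathscr D}^{\circ}\subset\overline{\cG_{\mathscr D}+\big(\tr_{\cL}(\cD^1_{\cL}(\Omega))\big)^{\circ}}=\overline{\cG_{\mathscr D}+\{0\}}=\cG_{\mathscr D},
\]
using only density (Hypothesis \ref{bb1}(i)) to kill the second summand. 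In Part 2 the paper moves the whole argument to the quotient symplectic space $\cH_{\cL}=\dom(\cL_{max})/\dom(\cL_{min})$ with form $\widetilde\omega$, where every class has well-defined symplectic pairing without needing a boundary trace; there it proves $[\cD_{\cG}]^{\circ}\cap[\cD^1_{\cL}(\Omega)]\subset[\cD_{\cG}]$ via Green and then uses the same annihilator trick together with Hypothesis \ref{bb1}(ii), which gives $[\cD^1_{\cL}(\Omega)]^{\circ}=\{[0]\}$. Essential self-adjointness then follows from \cite[Lemma 3.3(b)]{BbF95}. The moral: the non-surjectivity of $\tr_{\cL}$ that you correctly flag as the obstacle is handled not by analytic limit extraction but by this annihilator-of-a-dense-set maneuver.
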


\begin{proof} 

First we concentrate on proving part {\it 1}. In order to show that $\cG_{\mathscr D}$ is isotropic we employ the second Green identity \eqref{e13}: For arbitrary pairs $(\gaD u, {\gaN^{\cL}} u)\in \cG_{\mathscr D}$, $  (\gaD v, {\gaN^{\cL}} v)\in \cG_{\mathscr D}$ one has
\begin{align}
\begin{split}
\omega\left((\gaD u, {\gaN^{\cL}} u), (\gaD v, {\gaN^{\cL}} v)\right)&=\overline{\langle{\gaN^{\cL}} v ,\gaD u \rangle}_{{-1/2}}-\langle{\gaN^{\cL}} u ,\gaD v \rangle_{{-1/2}}\\
&=\langle\cL u, v\rangle_{L^2(\Omega)}- \langle u,\cL v\rangle_{L^2(\Omega)}=0,
\end{split}
\end{align}
where the latter equality follows since $\cL_{\mathscr D}$ is symmetric. Next, we show maximality of the isotropic subspace $\cG_{\mathscr D}$, that is, that 
\begin{equation}\lb{b2}
\cG_{\mathscr D}^{\circ}\subset\cG_{\mathscr D}.
\end{equation}
To this end, we shall establish an intermediate inclusion 
\begin{equation}\lb{b3}
	\cG_{\mathscr D}^{\circ}\cap\tr_{\cL}(\cD^1_{\cL}(\Omega))\subset \cG_{\mathscr D}.
\end{equation}
Indeed, if $(f,g)\in \cG_{\mathscr D}^{\circ}\cap\tr_{\cL}\left(\cD^1_{\cL}(\Omega)\right)$ then  
\begin{equation}\lb{b6}
 (f,g)=\left(\gaD u_0,{\gaN^{\cL}} u_0\right),\text{\, for some\ } u_0\in \cD^1_{\cL}(\Omega),
\end{equation}
and
\begin{equation}\lb{b7}
\omega\left((\gaD u_0,{\gaN^{\cL}} u_0), (\gaD v,{\gaN^{\cL}} v)\right)=0,\text{\, for all\ }v\in\mathscr D.
\end{equation}
On the other hand, using the second Green identity \eqref{e13} with $u=u_0$ and $v\in\mathscr D$, one has
\begin{equation}\lb{b8}
\langle\cL u_0, v\rangle_{L^2(\Omega,\C^m)}- \langle u_0,\cL v\rangle_{L^2(\Omega,\C^m)}=\omega\left((\gaD u_0,{\gaN^{\cL}} u_0), (\gaD v,{\gaN^{\cL}} v)\right).
\end{equation}
Hence,
\begin{equation}\lb{b9}
\langle\cL u_0, v\rangle_{L^2(\Omega)}=\langle u_0,\cL v\rangle_{L^2(\Omega)},\text{\, for all\ }v\in\mathscr D, 
\end{equation}
and therefore, 
\begin{equation}\lb{b10}
u_0\in \dom\left((\cL_{\mathscr D})^*\right)=\dom (\cL_{\mathscr D}),
\end{equation}
since $\cL_{\mathscr D}$ is self-adjoint by the assumption. Finally, using \eqref{b6} and inclusion \eqref{b10}, one infers $(f,g)\in \cG_{\mathscr D}$ and completes the proof of assertion \eqref{b3}.
Next we prove inclusion \eqref{b2}. Employing \eqref{b3} and the standard properties of annihilator, we obtain  
\begin{equation}\lb{b11}
		\cG_{\mathscr D}^{\circ}\subset \overline{\cG_{\mathscr D}+\left\{\tr_{\cL}(\cD^1_{\cL}(\Omega))\right\}^{\circ}}^{\bndr}.
\end{equation}
But
\begin{equation}\lb{b12}
\left\{\tr_{\cL}(\cD^1_{\cL}(\Omega))\right\}^{\circ}=\{(0,0)\},
\end{equation}	
since by the assumptions in Hypothesis \ref{bb1} the set $\tr_{\cL}(\cD_{\cL}^1(\Omega)))$ is dense in $\bndr$. Hence, the proof of part {\it 1} is completed.

Proof of part {\it 2.} 
Following \cite[Section 3.1]{BbF95} we introduce the space of abstract boundary values $\cH_{\cL}:= \dom(\cL_{max})/\dom(\cL_{min})$ equipped with the norm
\begin{equation}
\|[x]\|_{\cH_{\cL}}:=\inf\{\|x+f\|_{\cL,0}: f\in\dom(\cL_{min}) \},
\end{equation}
where $[x]$ is the equivalence class of $x\in\dom(\cL_{max})$ and $\|\cdot\|_{\cL,0}$ is the graph norm from \eqref{e1}. We define the symplectic form on $\cH_{\cL}$ by the formula 
\begin{align}
\begin{split}
\wti \omega([x]&,[y]):=\langle \cL_{max} x, y \rangle_{L^2(\Omega,\C^m)}-\langle x,\cL_{max} y 
\rangle_{L^2(\Omega,\C^m)},\lb{ee2}\\
&\text{for all}\ [x],[y]\in\dom(\cL_{max})/\dom(\cL_{min}).
\end{split}
\end{align}   

Now we are ready to proceed with the proof of part {\it 2}.  Let $\cD_{\cG}:=\tr^{-1}(\cG)$. By \cite[Lemma 3.3 (b)]{BbF95}, it suffices to show that the closure of the subspace
\begin{equation}
[\cD_{\cG}]:=\{[x]: x\in\cD_{\cG}\},
\end{equation}
is Lagrangian in $\cH_{\cL}$ with respect to $\wti \omega$. Denoting the annihilator of $[\cD_{\cG}]$ by $[\cD_{\cG}]^{\circ}$, we notice that 
\begin{equation}\lb{c2}
[\cD_{\cG}]\subset [\cD_{\cG}]^{\circ},
\end{equation}
hence, the subspace is isotropic. In order to show the maximality of the closure of $[\cD_{\cG}]$, we will obtain an auxiliary inclusion
\begin{equation}\lb{c3}
[\cD_{\cG}]^{\circ}\cap [\cD^1_{\cL}(\Omega)]\subset [\cD_{\cG}].
\end{equation}
Indeed, if  $[u_0]\in[\cD_{\cG}]^{\circ}\cap [\cD_{\cL}^1(\Omega)]$ then 
\begin{equation}\lb{c4}
\wti\omega\left([u_0],[v]\right)=0, \text{\, for all\,} [v]\in [\cD_{\cG}],
\end{equation}
that is,
\begin{equation}\lb{c5}
\langle \cL_{max} u_0, v \rangle_{L^2(\Omega,\C^m)}-\langle u_0,\cL_{max} v \rangle_{L^2(\Omega,\C^m)}=0, \text{\, for all\,} [v]\in [\cD_{\cG}].
\end{equation}
Since $[u_0]\in [\cD^1_{\cL}(\Omega)]$, the trace map $\tr_{\cL}$ is well defined on $u_0$ and $(\gaD u_0,\gaN^{\cL} u_0)\in \bndr$. Hence, by the second Green identity, one has 
\begin{align}
\begin{split}
&\langle \cL_{max} u_0, v \rangle_{L^2(\Omega)}-\langle u_0,\cL_{max} v \rangle_{L^2(\Omega,\C^m)}\lb{c5.1}\\
&\quad=\overline{\langle{\gaN^{\cL}} v ,\gaD u_0 \rangle}_{{-1/2}}-\langle{\gaN^{\cL}} u_0 ,\gaD v \rangle_{{-1/2}}=0, 
\end{split}
\end{align}
for all $[v]\in [\cD_{\cG}]$. Therefore 
\begin{equation}\lb{c6}
\left(\gaD u_0,\gaN^{\cL} u_0\right)\in \big(\cG\cap\tr_{\cL}(\cD_{\cL}^1(\Omega))\big)^{\circ}.
\end{equation}
We claim that $\big(\cG\cap\tr_{\cL}(\cD_{\cL}^1(\Omega))\big)^{\circ}=\cG$. Indeed,
\begin{equation}\lb{c7}
\big(\cG\cap\tr_{\cL}(\cD_{\cL}^1(\Omega))\big)^{\circ}=\overline{\cG+\big(\tr_{\cL}(\cD_{\cL}^1(\Omega))\big)^{\circ}}=\overline{\cG+{(0,0)}}=\cG,
\end{equation}
where we used \eqref{b12}. Inclusion \eqref{c6} together with \eqref{c7} yield $(\gaD u_0,\gaN^{\cL} u_0)\in \cG$, which in turn implies $\tr_{\cL}^{-1}(u_0)\in \cD_{\cG}$. Consequently \eqref{c3} holds. Next, applying the annihilator operator $^{\circ}$ to \eqref{c3}, one obtains
\begin{equation}\lb{c8}
\left[\cD_{\cG}\right]^{\circ}\subset \overline{\overline{[\cD_{\cG}]}+ \left[\cD^1_{\cL}(\Omega)\right]^{\circ}}.
\end{equation}
Since $\cD^1_{\cL}(\Omega)$ is dense in $D^1_{\cL}(\Omega)$, one has 
\begin{equation}\lb{c9}
\left[\cD^1_{\cL}(\Omega)\right]^{\circ}=\{[0]\}.
\end{equation}
Combining \eqref{c9} and \eqref{c8} one obtains \eqref{c2}. 
\end{proof}

We illustrate Theorem \ref{l4} by describing the Lagrangian planes associated with different self-adjoint extensions of $\cL_{min}$. First, we consider the setup from \cite[Chapter 7]{F95}, \cite{CJM1}.  Let $\cX$ be a closed subspace in $H^1(\Omega,\C^m)$ and assume that $H_0^1(\Omega,\C^m)\subset\cX\subset H^1(\Omega,\C^m)$. In addition, suppose that the form 
\begin{equation}\lb{yy20}
\mathfrak{l}:L^2(\Omega,\C^m)\times L^2(\Omega,\C^m)\rightarrow \C, \dom(\mathfrak{l}):=\cX,
\end{equation}
is closed and bounded from below in $L^2(\Omega,\C^m)$. Then, by \cite[Theorem 2.8]{EE89}, there exists a unique self-adjoint operator $\cL_{\cX}$ acting in $L^2(\Omega,\C^m)$ such that
\begin{equation}\lb{yy22}
\mathfrak{l}[u,v]=\langle\cL_{\cX} u, v\rangle_{L^2(\Omega,\C^m)}\text{\ for all\ }u\in\dom(\cL_{\cX}), v\in\cX. 
\end{equation}
The domain of $\cL_{\cX}$ is given by the formula
\begin{align}
\begin{split}\lb{2.52a}
\dom(\cL_{\cX}):=\{ u\in\cX\ : &\exists w\in L^2(\Omega)\text{\ such that\ } \\
&\langle w,v\rangle_{L^2(\Omega)}=\mathfrak{l} [w,v]\ \text{for all}\  v\in\cX \}.
\end{split}
\end{align}
\begin{proposition}\lb{prop29} Let
\begin{align}
\begin{split}
\cG_{\cX}:=\{(&f,g)\in\bndro: \lb{r12r}\\
&f\in\gaD({\cX}), \langle g, \gaD w \rangle_{-1/2}=0 \text{\ for all\ }w\in {\cX}\}. 
\end{split}
\end{align}
Then $\cG_{\cX}$ is a Lagrangian plane. Moreover, $\tr^{-1}_{\cL}(\cG_{\cX})$ is a core of $\cL_{\cX}$.
\end{proposition}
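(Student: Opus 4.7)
The plan is to exploit the inclusion $H^1_0(\Omega,\C^m)\subset\cX$ to identify $S:=\gaD(\cX)$ as a \emph{closed} subspace of $H^{1/2}(\partial\Omega)$, to rewrite $\cG_\cX$ as a product $S\times S^\circ$ (with $S^\circ$ the annihilator of $S$ in $H^{-1/2}(\partial\Omega)$), and then to verify both claims by essentially algebraic manipulations in this product together with the first Green identity.

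First I would observe that $S=\gaD(\cX)$ is closed in $H^{1/2}(\partial\Omega)$. Since $\ker(\gaD)=H^1_0(\Omega,\C^m)\subset\cX$, one has $\cX=\gaD^{-1}(S)$; the induced map $H^1(\Omega,\C^m)/H^1_0(\Omega,\C^m)\to H^{1/2}(\partial\Omega)$ is a topological isomorphism (open mapping theorem applied to the surjection $\gaD$), and $\cX/H^1_0$ being closed in the quotient maps to a closed subspace $S$ of $H^{1/2}(\partial\Omega)$. Consequently
\begin{equation*}
\cG_{\cX}=S\times S^{\circ},\qquad S^{\circ}:=\{g\in H^{-1/2}(\partial\Omega):\langle g,f\rangle_{-1/2}=0\text{ for all }f\in S\}.
\end{equation*}

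Next, for Lagrangianness: isotropy is immediate because for $(f_i,g_i)\in S\times S^{\circ}$ both $\langle g_j,f_i\rangle_{-1/2}$ vanish, so $\omega((f_1,g_1),(f_2,g_2))=0$. For co-isotropy, take $(f,g)\in\cG_{\cX}^{\circ}$. Testing against pairs $(f',0)$ with $f'\in S$ (which lie in $\cG_{\cX}$ trivially) forces $\langle g,f'\rangle_{-1/2}=0$ for every $f'\in S$, i.e.\ $g\in S^{\circ}$. Testing against pairs $(0,g')$ with $g'\in S^{\circ}$ forces $\overline{\langle g',f\rangle}_{-1/2}=0$, so $f\in(S^{\circ})^{\circ}$; since $S$ is closed and $H^{1/2}(\partial\Omega)$ is reflexive, the bipolar theorem gives $(S^{\circ})^{\circ}=S$, hence $(f,g)\in S\times S^{\circ}=\cG_{\cX}$.

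For the core claim I would in fact prove the stronger equality $\tr_{\cL}^{-1}(\cG_{\cX})=\dom(\cL_{\cX})$. If $u\in\tr_{\cL}^{-1}(\cG_{\cX})$, then $u\in\cD^1_{\cL}(\Omega)$ with $\gaD u\in S$ and $\gaN^{\cL}u\in S^{\circ}$. The first condition together with $H^1_0\subset\cX$ yields $u\in\cX$ (write $u=w_0+(u-w_0)$ for a $w_0\in\cX$ with $\gaD w_0=\gaD u$), and Proposition \ref{e10} delivers
\begin{equation*}
\mathfrak{l}[u,v]=\langle\cL u,v\rangle_{L^2(\Omega,\C^m)}+\langle\gaN^{\cL}u,\gaD v\rangle_{-1/2}=\langle\cL u,v\rangle_{L^2(\Omega,\C^m)}\quad\text{for all }v\in\cX,
\end{equation*}
so by \eqref{2.52a} one has $u\in\dom(\cL_{\cX})$ and $\cL_{\cX}u=\cL u$. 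Conversely, if $u\in\dom(\cL_{\cX})$ with associated $w\in L^2(\Omega,\C^m)$, testing the form identity against $v\in C_0^{\infty}(\Omega,\C^m)\subset\cX$ shows $\cL u=w$ distributionally, so $u\in\cD^1_{\cL}(\Omega)$; feeding the equality back into the first Green identity then yields $\langle\gaN^{\cL}u,\gaD v\rangle_{-1/2}=0$ for every $v\in\cX$, i.e.\ $\tr_{\cL}u\in\cG_{\cX}$.

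The principal subtlety is the closedness of $\gaD(\cX)$ in $H^{1/2}(\partial\Omega)$, which is what allows both the bipolar identity $(S^{\circ})^{\circ}=S$ and the clean product description of $\cG_{\cX}$; once this is in place, no deeper obstacle appears, the Lagrangian property being purely duality-theoretic and the core statement following from the first Green identity.
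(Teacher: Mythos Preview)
Your proof is correct and actually proves more than the paper asks (the equality $\tr_{\cL}^{-1}(\cG_{\cX})=\dom(\cL_{\cX})$ rather than merely the core statement), but it proceeds differently from the paper's argument.

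The paper dispatches the Lagrangian property by citing \cite[Lemma~3.6]{CJM1}, and for the core claim it only verifies the inclusion $\tr_{\cL}(\dom(\cL_{\cX}))\subset\cG_{\cX}$ via the first Green identity; the conclusion then follows implicitly from Theorem~\ref{l4}, part~2, which guarantees that $\cL_{\tr_{\cL}^{-1}(\cG_{\cX})}$ is essentially self-adjoint, and hence (being a symmetric extension of the self-adjoint $\cL_{\cX}$) must coincide with $\cL_{\cX}$. Your argument instead establishes closedness of $S=\gaD(\cX)$ directly from $\ker(\gaD)\subset\cX$ and the open mapping theorem, writes $\cG_{\cX}=S\times S^{\circ}$, and reads off the Lagrangian property from the bipolar identity $(S^{\circ})^{\circ}=S$; then you show both inclusions $\tr_{\cL}^{-1}(\cG_{\cX})\subset\dom(\cL_{\cX})$ and $\dom(\cL_{\cX})\subset\tr_{\cL}^{-1}(\cG_{\cX})$ by hand, using only the first Green identity. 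What you gain is self-containment: no external citation for Lagrangianness and no appeal to the abstract machinery of Theorem~\ref{l4}. What the paper's route buys is brevity, since the heavy lifting has already been done elsewhere. One small remark: in your bipolar step reflexivity of $H^{1/2}(\partial\Omega)$ is not actually needed; Hahn--Banach alone gives $(S^{\circ})^{\circ}=\overline{S}=S$ for a closed subspace.
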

\begin{proof} The plane $\cG_{\cX}$ is Lagrangian by \cite[Lemma 3.6]{CJM1}. It remains to show that $\tr_{\cL}(\dom(\cL_{\cX}))\subset\cG_{\cX}$. By the first Green identity \eqref{e14}, for each $u\in\cD^1_{\cL}(\Omega)$, $v\in\cX$ we have
\begin{equation}\lb{yy21}
\mathfrak{l}[u,v]=\langle\cL u, v\rangle_{L^2(\Omega,\C^m)}+\langle {\gaN^{\cL}}u,\gaD v\rangle_{-1/2}.
\end{equation}
Combining \eqref{yy22} and \eqref{yy21} we obtain
\begin{equation}\lb{yy23}
\langle {\gaN^{\cL}}u,\gaD v\rangle_{-1/2}=0\text{\ for all\ }u\in\dom(\cL_{\cX}), v\in H^1(\Omega,\C^m).
\end{equation}
Using $\dom(\cL_{\cX})\subset \cX$ and \eqref{yy23} we conclude that $(\gaD u, {\gaN^{\cL}}u)\in\cG_{\cX}$ if $u\in \dom(\cL_{\cX})$, as required.
\end{proof}
 Proposition \ref{prop29} shows that the operator $\cL_{\cX}$, defined in  \eqref{2.52a}, \eqref{yy22}, is associated with Lagrangian plane $\cG_{\cX}$ as indicated in Theorem \ref{l4}. In particular, if $\cX:=H^1_0(\Omega,\C^m)$ then $\cG_{\cX}=\{0\}\times H^{-1/2}(\partial\Omega,\C^m)$ and  $\cL_{\cX}$ corresponds to the Dirichlet boundary conditions. If $\cX:=H^1(\Omega,\C^m)$ then $\cG_{\cX}=H^{1/2}(\partial\Omega,\C^m)\times\{0\}$ and  $\cL_{\cX}$ corresponds to the Neumann boundary conditions.

 We remark next that $\tr_{\cL}$ is not onto in general. This amounts to the fact that $\tr_{\cL}$ does not map domains of self-adjoint extensions into Lagrangian planes in $\bndr$, but only into their dense subsets. 
\begin{proposition}\lb{k1}
Let $\Omega\subset \R^n, n\geq 2$ be an open, bounded domain with smooth boundary. Then the map $\tr_{\Delta}$ corresponding to the Laplacian,
\begin{align}
\tr_{\Delta}:=(\gaD,\gaN):\cD^1_{\Delta}(\Omega)\rightarrow H^{1/2}(\partial\Omega)\times H^{-1/2}(\partial\Omega),
\end{align}
is not surjective.  
\end{proposition}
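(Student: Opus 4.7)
The strategy is to show that the range of $\tr_{\Delta}$ is constrained to a proper affine subspace of $H^{1/2}(\partial\Omega)\times H^{-1/2}(\partial\Omega)$ determined by the Dirichlet-to-Neumann map, and then exhibit a pair avoiding that subspace.

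First, I would decompose an arbitrary $u\in\cD^1_{\Delta}(\Omega)$ as $u=u_1+u_2$, where $u_1\in H^1_0(\Omega)$ solves the Poisson problem $-\Delta u_1 = -\Delta u \in L^2(\Omega)$ with zero Dirichlet data, and $u_2:=u-u_1$ is harmonic with $\gamma_D u_2=\gamma_D u$. Because $\partial\Omega$ is smooth, classical elliptic regularity for the Dirichlet problem gives $u_1\in H^2(\Omega)\cap H^1_0(\Omega)$; consequently $\gamma_N u_1\in H^{1/2}(\partial\Omega)$ by the standard trace theorem applied to $\nabla u_1\in H^1(\Omega;\C^n)$. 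On the other hand, the Dirichlet-to-Neumann operator $\Lambda\in\cB\bigl(H^{1/2}(\partial\Omega),H^{-1/2}(\partial\Omega)\bigr)$ satisfies $\gamma_N u_2=\Lambda(\gamma_D u_2)=\Lambda(\gamma_D u)$, which is well defined as an element of $H^{-1/2}(\partial\Omega)$ but in general no better. Combining these two observations yields the key structural identity
\begin{equation}
\gamma_N u - \Lambda(\gamma_D u)\;=\;\gamma_N u_1 \;\in\; H^{1/2}(\partial\Omega),
\end{equation}
valid for every $u\in\cD^1_{\Delta}(\Omega)$. Thus
\begin{equation}
\ran(\tr_{\Delta})\;\subset\;\bigl\{(f,g)\in H^{1/2}(\partial\Omega)\times H^{-1/2}(\partial\Omega)\,:\,g-\Lambda f\in H^{1/2}(\partial\Omega)\bigr\}.
\end{equation}

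Second, I would produce an element outside this set. The simplest choice is $f=0$, which reduces the condition $g-\Lambda f\in H^{1/2}(\partial\Omega)$ to $g\in H^{1/2}(\partial\Omega)$. Since $\partial\Omega$ is a smooth compact $(n-1)$-manifold, the Sobolev embedding $H^{1/2}(\partial\Omega)\hookrightarrow H^{-1/2}(\partial\Omega)$ is proper, so one can select any $g\in H^{-1/2}(\partial\Omega)\setminus H^{1/2}(\partial\Omega)$. Then $(0,g)$ lies in $H^{1/2}(\partial\Omega)\times H^{-1/2}(\partial\Omega)$ but fails the necessary range condition above, hence $(0,g)\notin\ran(\tr_{\Delta})$, proving non-surjectivity.

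The main technical point, and the only place requiring care, is the $H^2$-regularity used for $u_1$: one needs that $u\in H^1_0(\Omega)$ together with $\Delta u\in L^2(\Omega)$ forces $u\in H^2(\Omega)$. This is where smoothness of $\partial\Omega$ enters decisively (it would fail on a general Lipschitz domain), and it is what prevents the range of $\tr_{\Delta}$ from exhausting the full product space. Everything else is formal once this regularity fact and the boundedness of $\Lambda$ into $H^{-1/2}(\partial\Omega)$ are in hand.
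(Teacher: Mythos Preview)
Your proof is correct, but it takes a genuinely different route from the paper's.

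The paper argues by contradiction through the symplectic framework it has been developing: assuming surjectivity, it shows that $\cF:=\tr_{\Delta}(H^2(\Omega)\cap H^1_0(\Omega))$ would have to be a Lagrangian subspace of $H^{1/2}(\partial\Omega)\times H^{-1/2}(\partial\Omega)$. Isotropy is immediate from Green's identity; co-isotropy is where the assumed surjectivity enters, allowing any element of the annihilator to be written as $\tr_{\Delta} v_0$ and then forcing $v_0\in\dom(-\Delta_D)$ by self-adjointness. But $\cF=\{0\}\times H^{1/2}(\partial\Omega)$ is not closed in the ambient space, hence cannot be Lagrangian---contradiction.

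Your argument is more direct and constructive: you characterize a necessary condition on $\ran(\tr_{\Delta})$ via the identity $\gamma_N u-\Lambda(\gamma_D u)\in H^{1/2}(\partial\Omega)$, then exhibit an explicit pair $(0,g)$ with $g\in H^{-1/2}\setminus H^{1/2}$ violating it. This is arguably more elementary and yields a concrete obstruction (indeed, it is exactly the regularized Neumann trace $\tau_N$ that the paper invokes later in Section~5). Both approaches ultimately rest on the same two ingredients---$H^2$ regularity for the Dirichlet problem on smooth domains, and the proper inclusion $H^{1/2}(\partial\Omega)\subsetneq H^{-1/2}(\partial\Omega)$---but the paper packages them through the Lagrangian/self-adjointness correspondence that is its main theme, while you bypass that machinery entirely.
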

\begin{proof}
We prove the assertion by contradiction. Assume that $\tr_{\Delta}$ is surjective. Under this assumption one can show that $\cF:=\tr_{\Delta}(H^2(\Omega)\cap H^1_0(\Omega))$ is Lagrangian plane in $\bndr$. Indeed, $\cF\subset \cF^{\circ}$ since 
\begin{align}
\begin{split}
\omega(\tr_{\Delta} u, \tr_{\Delta} v)&=\overline{\langle\gaN v ,\gaD u \rangle}_{{-1/2}}-\langle\gaN u ,\gaD v \rangle_{{-1/2}}=0,\lb{m4}\\
&\text{for all}\ u,v\in H^2(\Omega)\cap H^1_0(\Omega).
\end{split}
\end{align}
In order to prove $\cF^{\circ}\subset \cF$, let us fix an arbitrary $(f,g)\in\lambda^{\circ}$. Since $\tr_{\Delta}$ is assumed to be surjective, there exists $v_0\in \cD_{\Delta}^1(\Omega)$ such that $\tr_{\Delta}(v_0)=(f,g)$. Furthermore,
\begin{equation}\lb{m7}
\omega(	\tr_{\Delta} u,(f,g))=\omega\left(\tr_{\Delta} u, \tr_{\Delta} v_0\right)=0,\ \text{for all}\ u\in H^2(\Omega)\cap H^1_0(\Omega). 
\end{equation}
In addition, the second Green identity yields
\begin{align}
&\langle-\Delta u, v_0\rangle_{L^2(\Omega)}- \langle u,-\Delta v_0\rangle_{L^2(\Omega)}=\overline{\langle\gaN v_0 ,\gaD u \rangle}_{{-1/2}}-\langle\gaN u ,\gaD v_0 \rangle_{{-1/2}}.\lb{m8}
\end{align}
Combining \eqref{m7} and \eqref{m8}, one obtains
\begin{equation}\lb{m9}
\langle-\Delta u, v_0\rangle_{L^2(\Omega)}- \langle u,-\Delta v_0\rangle_{L^2(\Omega)}=0, \text{for all}\ u\in H^2(\Omega)\cap H^1_0(\Omega).
\end{equation}
Let us recall that $H^2(\Omega)\cap H^1_0(\Omega)$ is the domain of Dirichlet Laplacian $-\Delta_{D}$, which is a self-adjoint operator in $L^2(\Omega)$. Therefore, \eqref{m9} leads to $v_0\in H^2(\Omega)\cap H^1_0(\Omega)$, which in turn implies $\tr_{\Delta} v_0=(f,g)\in \cF$ and $\cF^{\circ}\subset \cF$. Finally, we arrive at $\cF=\cF^{\circ}.$ On the other hand, 
\begin{equation}\lb{m99}
\cF=\tr_{\Delta}\left(H^2(\Omega)\cap H^1_0(\Omega)\right)=\{0\}\times H^{1/2}(\partial\Omega).
\end{equation}
The set $\{0\}\times H^{1/2}(\partial\Omega)$ is not closed in $ H^{1/2}(\partial\Omega)\times  H^{-1/2}(\partial\Omega)$, thus it is not Lagrangian. This contradiction completes the proof.
\end{proof}

\subsection{The Maslov index} We will now recall from \cite{BZ3}, \cite{BZ1}, \cite{BZ2}, \cite{BZ4} a definition of the Maslov index of a path of Lagrangian planes in a complex Hilbert space $\cX$ relative to a reference plane. This will require some preliminaries. Let $\omega$ be a symplectic form on $\cX$, i.e.,  we assume that $\omega: \cX\times \cX\rightarrow \C$ is a sesquilinear, bounded, skew-symmetric, non-degenerate form. Then there exists a bounded operator $J:\cX\rightarrow \cX$, such that
\begin{equation}\lb{ab1}
\omega(u,v)=\langle Ju,v\rangle_{\cX},\ \ u,v\in\cX,
\end{equation}
and
\begin{equation}\lb{ab2}
J^2=-I_{\cX}, J^{*}=-J.
\end{equation}
Moreover, $\cX$ admits an orthogonal decomposition into direct sum of the eigenspaces of the operator $J$, that is, 
\begin{equation}\lb{ab3}
	\cX=\ker(J-\bfi I)\oplus\ker(J+\bfi I).
\end{equation}
Therefore, the form $-\bfi\omega$ is positive definite on $\ker(J-\bfi I)$, the form $-\bfi\omega$ is negative definite on $\ker(J+\bfi I)$, and $\omega(u,v)=0$ whenever  $u\in \ker(J-\bfi I), v\in\ker(J+\bfi I)$.
 We denote the annihilator of a subset $\cF\subset\cX$ by
\begin{equation}
	\cF^{\circ}:=\{u\in \cX: \omega(u,v)=0 \text{\ for all}\ v\in \cF\}.
\end{equation}
The subspace $\cF$ is called {\it Lagrangian} if $\cF=\cF^{\circ}$. The set of Lagrangian subspaces of $\cX$ is denoted by
\begin{equation}\lb{ba20}
\Lambda(\cX):=\{\cF\subset\cX: \cF\ \text{is Lagrangian in $\cX$} \}.
\end{equation}
 Following \cite[Lemma 3]{BZ2}, we notice that every Lagrangian plane $\cF$ can be uniquely represented as a graph of a bounded operator $U\in\cB(\ker(J+\bfi I_{\cX}), \ker(J-\bfi I_{\cX}))$, i.e., one has
\begin{equation}\lb{ab4}
	\cF=\gr(U):=\{y+Uy: y\in \ker(J+\bfi I_{\cX})\}.
\end{equation}
That is, $Uy\in\ker(J-\bfi I_{\cX})$ is the unique vector satisfying $y+Uy\in\cF$ for $y\in\ker(J+\bfi I_{\cX})$.
Moreover,
\begin{equation}\lb{ab5}
	\omega(x,y)=-\omega(Ux,Uy), \ x,y\in \ker(J+\bfi I_{\cX}).
\end{equation}
The operator $U$ is a unitary map acting between the Hilbert spaces $\ker(J+\bfi I_{\cX})$ and $\ker(J-\bfi I_{\cX})$. Indeed, for arbitrary $x,y\in\ker(J+\bfi I_{\cX})$ one has
\begin{align}
\begin{split}
&\langle x,y\rangle_{\cX}=\bfi \langle Jx,y\rangle_{\cX}=\bfi\omega(x,y)\lb{ab6}\\
&\quad=-\bfi \omega(Ux,Uy)=-\bfi\langle JUx,Uy\rangle_{\cX}=\langle Ux,Uy\rangle_{\cX}.
\end{split}
\end{align}

A pair of Lagrangian planes $\cF,\mathcal Z$ is called Fredholm pair if
\begin{align}\lb{ab7}
\dim(\cF\cap\cZ)<\infty,\  \cF+\cZ \text{\ is closed in $\cX$, and\ } \codim(\cF+\cZ)<\infty.
\end{align}
Let $\cF=\gr(U)$ and $\cZ=\gr(V)$ be Lagrangian planes in $\cX$, then by \cite[Lemma 2]{BZ2}, the pair $(\cF,\cZ)$ is Fredholm if and only if $UV^{-1}-I_{\cX}$ is Fredholm operator in $\ker(J-\bfi I_{\cX})$. Furthermore,
\begin{equation}\lb{ab8}
	\dim (\cF\cap \cZ)= \dim \ker (UV^{-1}-I_{\cX}).
\end{equation}

Let us fix a Lagrangian plane 
\begin{equation}
\cZ\subset\cX, \cZ=\gr(V),
\end{equation}
where $ V\in \cB(\ker(J+\bfi I_{\cX}), \ker(J-\bfi I_{\cX}))$ is a unitary operator. The Fredholm-Lagrangian-Grassmannian is the space
\begin{equation}\lb{ab9}
	F\Lambda(\cZ):=\{\cF\subset\cX: \cF \ \text{is Lagrangian, and the pair\ } (\cF,\cZ)\ \text{is Fredholm}\},
\end{equation}
equipped with metric 
\begin{equation}
	d(\cF_1, \cF_2):=\|P_{\cF_1}-P_{\cF_2}\|_{\cB(\cH)},\ \cF_1,\cF_2\in F\Lambda(\cZ),
\end{equation}
where $P_{\cF}$ denotes the orthogonal projection onto ${\cF}$.
Let $\cI=[a,b]\subset\bbR$ be a set of parameters. 
Let us fix a continuous path in $F\Lambda(\cZ)$
\begin{equation}
\Upsilon:\cI\rightarrow F\Lambda(\cZ),\ \ \Upsilon(s)=\cF_s,\ \Upsilon\in C(\cI,  F\Lambda(\cZ)),
\end{equation}
and  introduce the corresponding family of unitary operators $U_s$ such that 
\begin{align}
&\hspace{2cm}\cF_s=\gr(U_s),\ s\in\cI,\no\\
&\upsilon : \cI\rightarrow \cB(\ker(J+\bfi I_{\cX}), \ker(J-\bfi I_{\cX})),\  \upsilon(s)=U_s.\no
\end{align} 
The following is proved in \cite{BZ1}
\begin{align}
& \upsilon\in C(\cI,\cB(\ker(J+\bfi I_{\cX}), \ker(J-\bfi I_{\cX}))),\lb{ab12}\\
& U_sV^{-1}  \text{\ is unitary in\ } \ker(J-\bfi I_{\cX}),\ s\in\cI,\lb{ab13}\\
& U_sV^{-1}-I_{\cX} \text{\ is Fredholm in\ } \ker(J-\bfi I_{\cX}),\ s\in \cI,\lb{ab14}\\
&\dim (\cF_s\cap \cZ)= \dim \ker (U_sV^{-1}-I_{\cX})\lb{ab15},\ s\in \cI.
\end{align}
Utilizing \eqref{ab12}-\eqref{ab15} we will now define the Maslov index as a spectral flow through, the point $1\in\C$, of the family $\upsilon(s), s\in \cI$.  An illuminating discussion of the notion of the spectral flow of a family of closed operators through an admissible curve $\ell\subset\C$ can be found in \cite[Appendix]{BZ2}.  To proceed with the definition, note that due to \eqref{ab14} there exists a partition $a=s_0<s_1<\cdots<s_N=b$ of $[a,b]$ and positive numbers $\varepsilon_j\in(0,\pi)$, such that  $\e^{\pm\bfi \varepsilon_j}\not \in \Sp (U_sV^{-1})$ if $s\in[s_{j-1},s_j]$, for each $1\leq j\leq N$, see \cite[Lemma 3.1]{F04}. For any $\varepsilon>0$ and $s\in[a,b]$ we let
\begin{equation}\lb{ba18}
k(s,\varepsilon):=\sum\nolimits_{0\leq \varkappa\leq \varepsilon}\dim\ker(U_sV^{-1}-\e^{\bfi\varkappa}),
\end{equation}
and define the Maslov index
\begin{equation}\lb{dfnMInd}
\text{Mas}(\Upsilon,\cZ):=\sum\limits_{j=1}^{N}\left(k(s_j,\varepsilon_j)-k(s_{j-1},\varepsilon_j)\right).
\end{equation}
The number Mas$(\Upsilon,\cX)$ is well defined, i.e., it is independent on the choice of the partition $s_j$ and $\varepsilon_j$ (cf., \cite[Proposition 3.3]{F04}).

Next we turn to the computation of the Maslov index via the crossing forms.  Assume that $\Upsilon\in C^1(\cI, F\Lambda(\cX))$ and let $t_*\in\cI$.  There exists a neighbourhood $\cI_0$ of $t_*$ and a family $R_t\in C^1(\cI_0, \cB(\Upsilon(t_*), \Upsilon(t_*)^{\perp}))$, such that $\Upsilon(t)=\{u+R_tu\big| u\in \Upsilon(t_*)\}$, for $t\in \cI_0$ see, e.g., \cite{F04} or \cite[Lemma 3.8]{CJLS}.  
We will use the following terminology from \cite[Definition 3.20]{F04}.
\begin{definition}\label{def21} Let $\cZ$ be a Lagrangian subspace and $\Upsilon\in C^1(\cI, F\Lambda(\cZ))$.
	
	{\it (i)} We call $s_*\in\cI$ a conjugate point or crossing if $\Upsilon(s_*)\cap \cZ\not=\{0\}$.
	
	{\it (ii)} The finite dimentional form $$\cQ_{s_*,\cZ}(u,v):=\frac{d}{ds}\omega(u,R_sv)\big|_{s=s_*}=\omega(u, \dot{R}_{s=s_*}v), \text{\ for\ }u,v \in \Upsilon(s_*)\cap \cZ,$$  is called the crossing form at the crossing $s_*$.
	
	{\it (iii)} The crossing $s_*$ is called regular if the form $\cQ_{s_*,\cZ}$ is non-degenerate, positive if $\cQ_{s_*,\cZ}$ is positive definite, and negative if $\cQ_{s_*,\cZ}$ is negative definite.

\end{definition}

The following result (cf., {\cite[Proposition 3.2.7]{BZ1}} and Remark \ref{ba23}) provides an efficient tool for computing the Malsov index at regular crossings. We denote by $n_+$ and $n_-$ the number of positive and negative squares fo a form, the signature is defined by the formula $\sign=n_+-n_-$. 
\begin{theorem} \lb{masform}
	Let $\Upsilon\in C^1(\cI, F\Lambda(\cZ))$, and assume that all crossings are regular. Then the crossings are isolated, and one has 
	\begin{equation}\lb{ab17}
	\mi(\Upsilon,\cZ)=-n_-(\cQ_{a,\cZ})+\sum\limits_{a<s<b}\sign(\cQ_{s,\cZ})+n_+(\cQ_{b,\cZ}).
	\end{equation}
\end{theorem}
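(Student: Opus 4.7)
The plan is to reduce to a local analysis near each crossing. First I would show regular crossings are isolated. At a crossing $s_*$, use the parametrization $\Upsilon(s)=\{u+R_s u: u\in \Upsilon(s_*)\}$ with $R_s \in \cB(\Upsilon(s_*),\Upsilon(s_*)^{\perp})$ and $R_{s_*}=0$. An element $u+R_s u$ lies in $\cZ$ if and only if $u$ satisfies an equation whose linearization in $s$ at $s_*$, restricted to $\Upsilon(s_*)\cap \cZ$, corresponds via $\omega(\cdot,\dot R_{s_*}\cdot)$ exactly to the crossing form $\cQ_{s_*,\cZ}$. Non-degeneracy of $\cQ_{s_*,\cZ}$ and the implicit function theorem then force $\Upsilon(s)\cap \cZ=\{0\}$ for $s\neq s_*$ close to $s_*$. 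Compactness of $[a,b]$ yields finitely many crossings $s_1<\dots<s_k$.

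Next I would compute the contribution of each crossing to the Maslov index. Choose the partition in the definition of $\mi$ finer than the crossings, so that only a single $s_*$ lies in each subinterval. Write $\Upsilon(s)=\gr(U_s)$ and $W_s:=U_sV^{-1}$, a continuous family of unitary operators on $\ker(J-\bfi I_{\cX})$ by \eqref{ab12}--\eqref{ab13}. The eigenvalues of $W_s$ on the unit circle that cross $1$ contribute to \eqref{dfnMInd} via the counting function \eqref{ba18}. Using \eqref{ab5} and differentiating the relation $W_s y(s)=e^{\bfi\mu(s)}y(s)$ along an analytic eigenvalue branch with $\mu(s_*)=0$, one identifies $\dot\mu(s_*)$, restricted to the isotypic subspace corresponding to the eigenvalue $1$, with the self-adjoint operator associated to $\cQ_{s_*,\cZ}$ on $\Upsilon(s_*)\cap\cZ$. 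Thus positive eigenvalues of $\cQ_{s_*,\cZ}$ correspond to eigenvalues of $W_s$ crossing $1$ counterclockwise, and negative eigenvalues to clockwise crossings.

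The contribution of a single crossing now follows directly from \eqref{ba18}, \eqref{dfnMInd}. For an interior crossing, counterclockwise passages increment $k(s_j,\varepsilon_j)-k(s_{j-1},\varepsilon_j)$ by $+1$ and clockwise by $-1$, giving $n_+(\cQ_{s_*,\cZ})-n_-(\cQ_{s_*,\cZ})=\sign(\cQ_{s_*,\cZ})$. At the left endpoint $a$, eigenvalues at $1$ are already included in $k(a,\varepsilon)$; positive motions leave them in the arc (contribution $0$) while negative motions move them off (contribution $-1$), producing $-n_-(\cQ_{a,\cZ})$. A symmetric analysis at $b$ yields $+n_+(\cQ_{b,\cZ})$. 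Summing over all crossings gives \eqref{ab17}.

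The principal technical obstacle is the identification in the second paragraph between the eigenvalue derivatives $\dot\mu(s_*)$ and the crossing form. This requires translating between two parametrizations of $F\Lambda(\cZ)$ near $\Upsilon(s_*)$: the graph over $\Upsilon(s_*)^{\perp}$ used to define $\cQ_{s_*,\cZ}$, and the graph over $\ker(J+\bfi I_{\cX})$ used to define $U_s$. One verifies via the decomposition \eqref{ab3} and the unitary intertwining \eqref{ab6} that the derivatives of the two parametrizations at $s_*$ agree as quadratic forms on $\Upsilon(s_*)\cap\cZ$, following the template of \cite[Proposition 3.2.7]{BZ1}.
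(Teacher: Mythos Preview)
The paper does not supply its own proof of this theorem: it is stated with a reference to \cite[Proposition 3.2.7]{BZ1}, and Remark~\ref{ba23} explains why the endpoint contributions here differ from those in \cite{BZ1} (the counting function \eqref{ba18} includes $\varkappa=0$, so the Maslov index counts eigenvalues leaving the \emph{closed} arc through $1$ rather than the open one). There is therefore no in-paper argument to compare against beyond that remark.

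Your sketch is the standard route and matches what one finds in \cite{BZ1}: local graph parametrization to isolate crossings, identification of the crossing form with the derivative of the spectral data of $U_sV^{-1}$ at the eigenvalue $1$, and then a direct reading of \eqref{ba18}--\eqref{dfnMInd}. Your endpoint analysis is consistent with the convention spelled out in Remark~\ref{ba23}, yielding $-n_-$ at $a$ and $+n_+$ at $b$ as stated. The one place to be careful is the ``identification'' paragraph: in the infinite-dimensional setting one does not have analytic eigenvalue branches in general, and the argument in \cite{BZ1} proceeds instead via the Cayley-type correspondence between $R_s$ (equivalently $A_s=-JR_s$) and $U_s$ together with finite-rank spectral perturbation near the isolated eigenvalue $1$; your appeal to ``analytic eigenvalue branches'' should be replaced by that mechanism, but the conclusion is the same.
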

We will now review the definition of the Maslov index for {\it two} paths with values in Lagrangian--Grassmannian $\Lambda(\cX)$, see \cite[Section 3.5]{F04}. Let us fix $\Upsilon_1,\Upsilon_2\in C(\cI, \Lambda(\cX))$ and assume that $(\Upsilon_1(s),\Upsilon_2(s))$ is a Fredholm pair for all $s\in \cI$. Let $\diag:=\{(p,p):p\in\cX\}$ denote the diagonal plane in $\cX\oplus\cX$.  On  $\cX\oplus\cX$ we define the symplectic form $\hat{\omega}:=\omega\oplus(-\omega)$ with the complex structure $\wti{J}:=J\oplus(-J)$, denoting the resulting space of Lagrangian planes  by $\Lambda_{\hat{\omega}}(\cX\oplus\cX)$. We consider the path $\wti{\Upsilon}:=\Upsilon_1\oplus\Upsilon_2\in C (\cI,\Lambda_{\hat{\omega}}(\cX\oplus\cX))$ and define the Maslov index of the two paths $\Upsilon_1,\Upsilon_2$ as $\mi(\Upsilon_1,\Upsilon_2):=\mi(\wti{\Upsilon},\diag).$
If $\Upsilon_2(t)=\cZ$ for all $t\in\cI$, then $ \mi(\Upsilon_1\oplus \Upsilon_2,\diag)=\mi(\Upsilon_1,\cZ).$
\begin{remark}\lb{ba23}
We adopted definition \eqref{dfnMInd} of the Maslov index as the spectral flow of $U_sV^{-1}$ through the point $1$. Since $\varkappa$ in \eqref{ba18} is allowed to be equal to zero, the Maslov index defined in \eqref{dfnMInd} counts the number of the eigenvalues of $U_sV^{-1}$ that leave the {\it closed} segment $\{e^{\bfi\varkappa}: \varkappa\in[0,\varepsilon] \}$ through $1$ as parameter $s$ varies from $a$ to $b$. In comparison, the Maslov index defined in \cite[Definition 2.1.1]{BZ1}  counts the number of eigenvalues that leave the {\it open} segment $\{e^{\bfi\varkappa}: \varkappa\in(0,\varepsilon) \}$. This difference in definitions is reflected in the formula relating the Maslov index and the signature of the crossing form. In our case, the Maslov index at the left (respectively, right) regular endpoint  crossing is equal to minus(respectively, plus) the number of negative (respectively, positive) directions of the crossing form. The Maslov index from \cite[Proposition 3.2.7]{BZ1} is equal to the number of positive(respectively, minus the number of negative) directions. We find definition \eqref{dfnMInd} more convenient as it permits to obtain a relation between the Maslov index of a certain path, and the Morse index of a family of self-adjoint operators without adding the dimension of subspace corresponding to the zero eigenvalue into the Morse index. 
\end{remark}
\begin{remark}
In this remark we discuss the relation between two different versions of representation of a Lagrangian plane as the graph of an operator. Assume that a Lagrangian plane $\cV\subset \cX$ is written in two different ways,
\begin{align}
&\cV=\gr(U), U\in \cB(\ker(J+\bfi I_{\cX}), \ker(J-\bfi I_{\cX})),\\
&\cV=\{x+JAx: x\in\cF\},\ \cF\in\Lambda(\cX),\ A\in\cB(\cF), 
\end{align}
Then, the operator $A$ is self-adjoint, since for all $x,y\in\cF$
\begin{align}
\begin{split}
0&=\omega(x+JAx,y+JAy)=\omega(x,JAy)+\omega(JAx,y)\lb{ba1}\\
&=(Jx,JAy)_{\cX}+(J^2Ax,y)_{\cX}=(x,Ay)_{\cX}-(Ax,y)_{\cX}.
\end{split}
\end{align}
Moreover, the operator $U_{\cF}:=(I-\bfi J)^{-1}U(I+\bfi J)$ considered in the subspace $\cF$ (notice that $\ker(I\pm\bfi J)=\{x\pm\bfi Jx: x\in \cF\}$) is equal to the Cayley transform of $A$. Indeed, using 
\begin{equation}\lb{baa20}
U(x+\bfi Jx)= U_{\cF}x-\bfi JU_{\cF}x,\ x\in\cF,
\end{equation}
together with the definition of $U$, one obtains
\begin{align}
&x+\bfi Jx+U_{\cF}x-\bfi JU_{\cF}x=y+JAy,\ y\in\cF\lb{ba21},\\
&x+U_{\cF}x+J(\bfi x-\bfi U_{\cF})=y+JAy\lb{ba22}.
\end{align}
Hence, $x+U_{\cF}x=y,\ x-U_{\cF}x=-\bfi Ay$ as asserted.
\end{remark}
\begin{remark}
The starting point for the definition of the Maslov index given in \cite{BbF95}, \cite{F04}  is a {\it real}  Hilbert space $\cH_{\bbR}$ equipped with a symplectic form. The Maslov index  in \cite{BbF95}, \cite{F04} is defined as the spectral flow (through $-1$) of a family of unitary operators (acting in an auxiliary complex space $\cH_{\C}$) obtained via the Souriau map. While the assumption that $\cH_{\bbR}$ is a {\it real} Hilbert space is not restrictive in many applications (cf., e.g., \cite{CJLS}, \cite{CJM1}, \cite{CJM2},  \cite{JLM}, \cite{JLS}, \cite{LSS}), it does prevent one from considering complex-valued boundary conditions (such as $\theta-$periodic, see below) without reduction to equivalent real-valued boundary conditions. Given the abstract nature of the  eigenvalue problem for self-adjoint extensions of $\cL$ (as in \eqref{d1}), a reduction to the real Hilbert spaces (i.e., to the real boundary conditions) cannot be carried out explicitly. Instead, we choose to adopt the definition of the Maslov index in complex symplectic Hilbert spaces. As it was pointed out in \cite[Corollary 2]{BZ2}, there is a natural identification between the Maslov index in the real Hilbert space $\cH_{\bbR}$ and the Maslov index in the complex Hilbert space $\cH_{\bbR}\otimes\C$ (the complexification of $\cH_{\R}$) defined as in \eqref{dfnMInd}. 
\end{remark}
\section{The Maslov index for second order elliptic operators on smooth domains}\lb{section3}
The main result of this section concerns with an index formula for second order elliptic operators with scalar coefficients defined on a smooth domain $\Omega\subset\R^n$, see Theorem \ref{w33}.
\subsection{Weak solutions and their traces} In this subsection we reformulate the eigenvalue problems for elliptic operators in terms of Lagrangian subspaces formed by the traces of {\it weak} solutions of corresponding equations.
\begin{hypothesis}\lb{q1}
Let $\Omega\subset\R^n,\ n\geq 2,$ be a bounded open set with smooth boundary. Let $\cI:=[\alpha,\beta],\ -\infty<\alpha<\beta<+\infty,$ be the interval of parameters. Assume that $ a^t, a^t_{j}, a^t_{jk}$ are contained in  $C^{\infty}(\overline{\Omega})$ for all $t\in\cI$. Suppose that  
\begin{align}
& a_{jk}:t\mapsto a^t_{jk},\ a_{jk}\in C^1(\cI,L^{\infty}(\Omega)),\ a^t_{jk}(x)=\overline{a^t_{kj}(x)},\ 1\leq j \leq n, x\in\overline{\Omega},\lb{q2}\\
& a^t_{jk}(x)\xi_k\overline{\xi_j}\geq c \sum\limits_{j=1}^n|\xi_j|^2\ \text{for all}\ x\in\overline{\Omega}, \xi=(\xi_j)_{j=1}^n\in\C^n, t\in\cI, \text{and some}\ c>0,\ \lb{q11}\\
& a_{j}:t\mapsto a^t_{j},\ a_{j}\in C^1(\cI,L^{\infty}(\Omega)),\ 1\leq j \leq n,\lb{q12}\\
& a:t\mapsto a^t,\ a\in C^1(\cI,L^{\infty}(\Omega)),\ a^t(x)\in\bbR,\ x\in\Omega,\ t\in\cI.\lb{q13}
\end{align}
\end{hypothesis}
Given the families of the functions $\{a^t\}_{t=\alpha}^{\beta}$, $\{a_j^t\}_{t=\alpha}^{\beta}$, $\{a_{jk}^t\}_{t=\alpha}^{\beta}$ we now consider the family $\{\cL^t\}_{t=\alpha}^{\beta}$ of the differential expressions
\begin{equation}\lb{r7}
\cL^t :=-\sum_{j,k=1}^{n} \partial_j a^t_{jk}\partial_k + \sum_{j=1}^{n}a^t_j\partial_j -\partial_j \overline{a^t_j}+a^t,\ t\in\cI,
\end{equation}
which are formally self-adjoint.
For $t\in\cI$  the minimal operator corresponding to the differential expression $\cL^t$ in $L^2(\Omega)$ is defined by the formula
\begin{equation}\lb{w1}
\cL^t_{min} f= \cL^tf,\  f\in\dom(\cL^t_{min}):=H^2_0(\Omega).
\end{equation}
The operator $\cL^t_{min}$ is a densely defined, bounded from bellow, symmetric operator. Its adjoint $\cL_{max}^t:=(\cL_{min}^t)^*$ is acting in $L^2(\Omega)$ and given by the formula  
\begin{equation}\lb{w2}
\cL^t_{max}u:=\cL^t u,\ u\in\dom(\cL^t_{max}):=\{u\in L^2(\Omega,\C^m): \cL^t u\in L^2(\Omega,\C^m)\}.
\end{equation}
Given a family of self-adjoint extensions  $\{\cL^t_{\mathscr {D}_t}\}_{t=\alpha}^{\beta}$ of $\cL_{min}^t$ with $\ \dom(\cL^t_{\mathscr {D}_t})={\mathscr {D}_t}$, one has the chain of extensions
\begin{equation}\lb{w3w}
\cL^t_{min}\subset \cL^t_{\mathscr D_t}\subset \cL^t_{max}.
\end{equation}

As discussed in Remark \ref{yy1}, all assumptions of Hypothesis \ref{bb1} are satisfied with $\cL$ in \eqref{d1} replaced by $\cL^t$ from \eqref{r7}. Hence $\cL^t_{min}$  fits the framework of Theorem \ref{l4} and its self-adjoint extensions are uniquely associated with Lagrangian planes in $\bndro$ via Theorem \ref{l4}.  

Our objective is to relate the Morse indices of the operators $\cL^{\beta}_{\mathscr D_{\beta}}$ and $\cL^{\alpha}_{\mathscr D_{\alpha}}$ to the Maslov index of a certain path  of Lagrangian planes in $\bndro$ defined by the given one parameter family of self-adjoint operators $\{\cL^t_{\mathscr {D}_t}\}_{t=\alpha}^{\beta}$. This will be achieved by utilizing homotopy invariance of the Maslov index. To this end we introduce a parametrization of the square loop in  Figure 1,
\begin{figure}
	\begin{picture}(100,100)(-20,0)
	\put(78,0){0}
	\put(80,8){\vector(0,1){95}}
	\put(5,10){\vector(1,0){95}}
	\put(71,40){\text{\tiny $\Gamma_2$}}
	\put(12,40){\text{\tiny $\Gamma_4$}}
	\put(45,75){\text{\tiny $\Gamma_3$}}
	\put(43,14){\text{\tiny $\Gamma_1$}}
	\put(100,12){$\lambda$}
	\put(85,100){$s$}
	\put(80,20){\line(0,1){60}}
	\put(10,20){\line(0,1){60}}
	\put(80,8){\line(0,1){4}}
	\put(2,2){$\lambda_\infty$}
	\put(10,20){\line(1,0){70}}
	\put(10,80){\line(1,0){70}}
	\put(65,20){\circle*{4}}
	\put(80,50){\circle*{4}}
	\put(80,70){\circle*{4}}
	\put(20,80){\circle*{4}}
	\put(40,80){\circle*{4}}
	\put(60,80){\circle*{4}}
	\put(20,87){{\tiny \text{eigenvalues}}}
	\put(14,24){{\tiny \,\,\,\text{eigenvalues}}}  
	\put(85,23){\rotatebox{90}{{\tiny conjugate points}}}
	\put(82,18){{\tiny $\alpha$}}
	\put(82,80){{\tiny $\beta$}}
	\put(0,25){\rotatebox{90}{{\tiny no intersections}}}
	\end{picture}
	\caption{\ }
\end{figure} 
\begin{align}
&\Sigma:=\cup_{j=1}^4\Sigma_j\to\Gamma=\cup_{j=1}^4\Gamma_j,\ s\mapsto (\lambda(s), t(s)),\lb{w17w}
\end{align}
where $\Gamma_j,\ j=1,\cdots, 4$ are the positively oriented sides of the boundary of the square $[\lambda_{\infty},0]\times [\alpha,\beta]$, the parameter set $\Sigma=\cup_{j=1}^4 \Sigma_j$ and $\lambda(\cdot)$, $t(\cdot)$ are defied as follows:
\begin{align}
&\lambda(s)=s,\, t(s)=\alpha ,\, s\in\Sigma_1:=[\lambda_{\infty},0],\lb{18}\\
&\lambda(s)=0,\, t(s)=s+\alpha ,\, s\in\Sigma_2:=[0,\beta-\alpha ],\lb{19}\\
&\lambda(s)= -s+\beta-\alpha ,\, t(s)= \beta,\, s\in\Sigma_3:=[\beta-\alpha,\beta-\alpha -\lambda_{\infty}],\lb{20}\\
&\lambda(s)=\lambda_{\infty},\, t(s)=-s+2\beta-\alpha -\lambda_{\infty},\lb{21}\\
&\hskip3cm s\in\Sigma_4:=[\beta-\alpha -\lambda_{\infty}, 2(\beta-\alpha)-\lambda_{\infty}].\no
\end{align} 

We now turn to the eigenvalue problem
\begin{equation}\lb{w5}
\cL^{t(s)}_{\mathscr D_{t(s)}} u=\lambda(s) u,\ u\not=0,\ s\in\Sigma.
\end{equation}
Recalling notation \eqref{b5}, for the family $\{\cL^{t(s)}_{\mathscr{D}_t}\}_{s\in\Sigma}$ of the self-adjoint operators from \eqref{r7}--\eqref{w3w} and the parametrization $t(\cdot)$, $\lambda(\cdot)$ from \eqref{w17w}--\eqref{21} we now define the following subspaces:
\begin{align}
&\cG_{t(s)}:=\overline{\tr_{\cL^{t(s)}}(\mathscr D_{t(s)})},\ \cK_{\lambda(s),t(s)}:=\tr_{\cL^{t(s)}}({\bf K}_{\lambda(s),t(s)}), \lb{w15}
\end{align}
\begin{align}
\begin{split}
&{\bf K}_{\lambda(s),t(s)}:=\Big\{u\in H^1(\Omega):\sum_{j,k=1}^{n}\langle a^{t(s)}_{jk}\partial_ku,\partial_j \varphi\rangle_{L^2(\Omega)}+\sum_{j=1}^{n}\langle a^{t(s)}_j\partial_ju,\varphi\rangle_{L^2(\Omega)}\\
&\quad+\sum_{j=1}^{n}\langle u,a^{t(s)}_j\partial_j\varphi\rangle_{L^2(\Omega)}+\langle a^{t(s)}u-\lambda(s)u,\varphi\rangle_{L^2(\Omega)}=0,\ \varphi\in H^1_0(\Omega)\Big\},\ s\in\Sigma. \no
\end{split}
\end{align}
The subspace ${\bf K}_{\lambda(s),t(s)}$ is the set of weak solutions to the equation $\cL^{t(s)}u=\lambda_s u$, the subspace $\cK_{\lambda(s),t(s)}$ is the set of their traces, and $\cG_{t(s)}$ is the subspace in $\bndro$ that corresponds to $\mathscr{D}_{t(s)}$ as indicated in Theorem \ref{l4}.

Our next Theorem \ref{w7} shows, in particular, that the existence of nontrivial solutions to  \eqref{w5} is equivalent to
\begin{equation}\lb{w6}
\cG_{t(s)} \cap \cK_{\lambda(s),t(s)}\not=\{0\},\ s\in\Sigma.
\end{equation}
Theorem \ref{w7} is an improvement of \cite[Proposition 3.5]{CJM1}, see also \cite[Propositoin 4.10]{CJLS}. Proposition 3.5 in \cite{BbF95} provides an elegant proof of a related assertion in the context of strong solutions and abstract boundary traces. This result cannot be directly applied in the setting of the weak traces and weak solutions, however, we adopted the proof of \cite[Proposition 3.5]{BbF95} in order to show part {\it ii)} in the following theorem. The novel part {\it ii)} of this theorem states that just the Fredholm property of the operator $\cL^{t(s)}_{\lambda(s)}-\lambda_{(s)} I_{L^2(\Omega)}$ alone implies that the pair of subspaces $\cK_{\lambda(s),t(s)}$ ({\it weak} traces of {\it weak} solutions) and $\cG_{t(s)}$ is Fredholm . We note that assertion {\it iii)} in the next theorem was proved in \cite[Proposition 3.5]{CJM1} (see also  \cite[Proposition 4.10]{CJLS}). 
\begin{theorem}\lb{w7} Assume Hypothesis \ref{q1}. Let $\mathscr D_t\subset \cD_{\cL^t}^1(\Omega),$ $t\in\cI:=[\alpha,\beta]$, and assume that the linear operator $\cL^t_{\mathscr D_t}$ acting in $L^2(\Omega)$ and given by
\begin{equation}\lb{w8}
\cL^t_{\mathscr D_t} u:=\cL^tu,\ u\in \dom(\cL^t_{\mathscr D_t}):={\mathscr D_t},
\end{equation}
is self-adjoint for each $t\in\cI.$

Then the following assertions hold:

i$)$ if $s\in \Sigma$, then $\cK_{\lambda(s),t(s)}$ and $\cG_{t(s)}$ 
 are Lagrangian planes with respect to symplectic form  \eqref{dd9},
	
ii$)$ if $\spec_{ess}\left(\cL^{t(s)}_{\mathscr{D}_{t(s)}}\right)\cap(-\infty,0]=\emptyset$ then $\left(\cK_{\lambda(s),t(s)},\cG_{t(s)}\right)$ is a Fredholm pair of Lagrangian planes in $\bndro$, moreover,
\begin{equation}\lb{w11}
\dim\left(\cK_{\lambda(s),t(s)}\cap\cG_{t(s)}\right)=\dim\ker\left(\cL^{t(s)}_{\mathscr D_{t(s)}}-\lambda(s)\right), s\in\Sigma,
\end{equation}
	
iii$)$ the path $s\mapsto \cK_{\lambda(s),t(s)}$ on $\Sigma=\cup_{j=1}^4\Sigma_j$ is continuous and is contained in the space
\begin{equation}\no
C^1\big(\Sigma_k, \Lambda(\bndro)\big), 1\leq k\leq 4.
\end{equation}
\end{theorem}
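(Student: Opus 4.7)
The plan is to prove the three assertions sequentially, relying on Theorem \ref{l4}, the second Green identity \eqref{bb88}, the unique continuation principle \eqref{UCP}, and the observation in Remark \ref{yy1} that Hypothesis \ref{q1} implies Hypothesis \ref{bb1} for each $\cL^{t(s)}$.

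For assertion (i), the subspace $\cG_{t(s)}$ is Lagrangian by a direct application of Theorem \ref{l4}(1). For $\cK_{\lambda(s),t(s)}$, any $u \in {\bf K}_{\lambda(s),t(s)}$ satisfies $\cL^{t(s)} u = \lambda(s) u$ in $(H^1_0(\Omega))^*$, hence $\cL^{t(s)} u \in L^2(\Omega)$ and $u \in \cD^1_{\cL^{t(s)}}(\Omega)$. Isotropy then follows from \eqref{bb88} together with $\lambda(s) \in \mathbb{R}$. For maximality I would adapt the annihilator argument from the proof of Theorem \ref{l4}(2): exhibit for each vector of $\cK_{\lambda(s),t(s)}^{\circ} \cap \tr_{\cL^{t(s)}}(\cD^1_{\cL^{t(s)}}(\Omega))$ a weak-solution preimage by correcting an arbitrary $\cD^1_{\cL^{t(s)}}$-lift by a Dirichlet solution of the residual inhomogeneity $\cL^{t(s)} u_0 - \lambda(s) u_0 \in L^2(\Omega)$, and then close by invoking density of $\tr_{\cL^{t(s)}}(\cD^1_{\cL^{t(s)}}(\Omega))$ in $\bndro$ from Hypothesis \ref{bb1}(i).

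For assertion (ii), the identity \eqref{w11} is a two-way inclusion: $\supseteq$ is immediate from the definitions, while $\subseteq$ uses \eqref{UCP} to lift $(f,g) \in \cK_{\lambda(s),t(s)} \cap \cG_{t(s)}$ uniquely to a weak solution and then exploits self-adjointness of $\cL^{t(s)}_{\mathscr{D}_{t(s)}}$ to place that lift into $\mathscr{D}_{t(s)}$. The Fredholm-pair assertion I would deduce from the Fredholm property of $\cL^{t(s)}_{\mathscr{D}_{t(s)}} - \lambda(s)$, which is a self-adjoint Fredholm operator of index zero by the essential-spectrum hypothesis together with $\lambda(s) \in [\lambda_\infty, 0]$: represent $\cK_{\lambda(s),t(s)}$ and $\cG_{t(s)}$ as graphs over a reference Lagrangian plane (e.g.\ the Dirichlet plane $\{0\} \times H^{-1/2}(\partial\Omega)$) and convert Fredholmness of the operator into Fredholmness of the difference of the corresponding graph operators via the resolvent of a reference self-adjoint extension. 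Assertion (iii) I would prove, following the approach of \cite{CJM1, CJLS}, by writing each element of ${\bf K}_{\lambda(s),t(s)}$ as a Poisson-type lift of its Dirichlet trace away from Dirichlet spectrum; since the coefficients depend $C^1$ on $s$ in $L^\infty(\Omega)$ along each $\Sigma_k$, a resolvent representation yields a $C^1$ parametrization of $\cK_{\lambda(s),t(s)}$ as a family of graphs in $\bndro$, with finitely many Dirichlet eigenvalue crossings handled by shifting the reference operator on small subintervals.

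The main obstacle I anticipate is the Fredholm-pair assertion in (ii). As Proposition \ref{k1} confirms, the trace map is not onto and $\bndro$ is \emph{not} symplectomorphic to the abstract quotient $\cH_{\cL^{t(s)}}$ used in \cite{BbF95}, so the elegant abstract argument of \cite[Proposition 3.5]{BbF95} does not transfer verbatim. Translating Fredholmness of the operator into Fredholmness of the geometric pair in the weak-trace boundary topology $\bndro$, without appealing to compactness of the resolvent, is the genuinely new technical input and will require careful bookkeeping of Dirichlet and Neumann trace regularities under the weak-solution hypothesis.
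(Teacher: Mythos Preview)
Your treatment of part (i), part (iii), and the dimension formula \eqref{w11} is essentially the paper's: the paper cites \cite[Proposition 3.5]{CJM1} for $\cK_{\lambda(s),t(s)}$ being Lagrangian and for (iii), and proves \eqref{w11} exactly as you sketch (injectivity by \eqref{UCP}, surjectivity by approximating in $\cG_{t(s)}$, using \eqref{bb88}, and invoking self-adjointness).

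Where you diverge is the Fredholm-pair assertion in (ii). Your plan is to parametrize $\cK_{\lambda(s),t(s)}$ and $\cG_{t(s)}$ as graphs over a reference plane and reduce to Fredholmness of a difference of graph operators. The paper's route is considerably more direct and avoids this machinery entirely. First, finite codimension is \emph{automatic} from the Lagrangian property already established in (i):
\[
\codim\big(\cK_{\lambda(s),t(s)}+\cG_{t(s)}\big)=\dim\big(\cK_{\lambda(s),t(s)}+\cG_{t(s)}\big)^{\circ}=\dim\big(\cK_{\lambda(s),t(s)}^{\circ}\cap\cG_{t(s)}^{\circ}\big)=\dim\big(\cK_{\lambda(s),t(s)}\cap\cG_{t(s)}\big),
\]
and the right-hand side is finite by \eqref{w11} and the Fredholm property of $\cL^{t(s)}_{\mathscr{D}_{t(s)}}-\lambda(s)$. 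Second, for closedness the paper adapts the \cite{BbF95} decomposition to the weak setting: it observes that
\[
{\bf K}_{\lambda(s),t(s)}+\mathscr{D}_{t(s)}=\big\{u\in\cD^1_{\cL^{t(s)}}(\Omega):\ (\cL^{t(s)}-\lambda(s))u\in\ran\big(\cL^{t(s)}_{\mathscr{D}_{t(s)}}-\lambda(s)\big)\big\},
\]
which is closed in $\cD^1_{\cL^{t(s)}}(\Omega)$ because $\cL^{t(s)}_{\mathscr{D}_{t(s)}}-\lambda(s)$ has closed range. Then $\tr_{\cL^{t(s)}}$ is bounded from this closed subspace into $\bndro$, its image $\cK_{\lambda(s),t(s)}+\tr_{\cL^{t(s)}}(\mathscr{D}_{t(s)})$ has finite codimension by the computation above, and \cite[Corollary 2.3]{GGK1} (a bounded operator with finite-codimension range has closed range) gives closedness of the sum. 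This is both shorter and avoids the regularity bookkeeping you flagged as the main obstacle.
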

\begin{proof} Let $s\in \Sigma$, then by Theorem \ref{l4}, the subset $\cG_{t(s)}\subset\bndro$ is Lagrangian.
The fact that $\cK_{\lambda(s),t(s)},$  $s\in \Sigma,$ is Lagrangian and part {\it iii}) were proved in \cite[Proposition 3.5]{CJM1}.
	
It remains to prove part {\it ii)}. Let $s\in\Sigma$ be fixed. In order to prove \eqref{w11}, we will firstly show an auxiliary result: The map 
\begin{equation}\lb{w17}
\tr_{\cL^{t(s)}}:\ker \left(\cL^{t(s)}_{\mathscr D_{t(s)}}-\lambda(s)\right) \rightarrow \cK_{\lambda(s),t(s)}\cap\cG_{t(s)},
\end{equation}
is one-to-one and onto. Indeed, it is injective since
\begin{equation*}
\text{If}\  \tr_{\cL^{t(s)}}u=0 \text{\ and\ } u\in \ker \left(\cL^{t(s)}_{\mathscr D_{t(s)}}-\lambda(s)\right), \text{\ then\ }  u=0,
\end{equation*}
due to unique continuation principle (cf. \cite[Theorem 3.2.2]{Is}). Next, we prove that \eqref{w17} is surjective. To this end, let us fix an arbitrary $(\phi,\psi)\in\cK_{\lambda(s),t(s)}\cap\cG_{t(s)}$. Since $(\phi,\psi)\in\cK_{\lambda(s),t(s)}$ there exists $u\in \cD^1_{\cL^{t(s)}}(\Omega)$ such that $\tr_{\cL^{t(s)}} u=(\phi,\psi)$. It suffices  to show that $u\in\mathscr D_{t(s)}$. Recall that 
\begin{equation*}
\tr_{\cL^{t(s)}} u\in\cG_{t(s)}=\overline{\tr_{\cL^{t(s)}}\left(\mathscr D_{t(s)}\right)},
\end{equation*}
thus, there exists a sequence $u_n\in\mathscr D_{t(s)},\ n\geq 1$, such that 
\begin{equation*}
\tr_{\cL^{t(s)}}u_n= \left(\gaD u_n, \gaN^{\cL^{t(s)}} u_n\right)\rightarrow \tr_{\cL^{t(s)}} u,\ n\rightarrow \infty,
\end{equation*}
in $\bndro$. For arbitrary $v\in \mathscr D_{t(s)}$ and all $n\geq 1$, one has
\begin{equation}
\begin{split}
&\omega\left((\gaD u_n, \gaN^{\cL^{t(s)}} u_n),(\gaD v, \gaN^{\cL^{t(s)}} v)\right)\\
&\quad=\overline{\langle\gaN^{\cL^{t(s)}} v ,\gaD u_n \rangle}_{{-1/2}}-\langle\gaN^{\cL^{t(s)}} u_n ,\gaD v \rangle_{{-1/2}}\lb{w20w}\\
&\quad=\langle\cL^{t(s)}u_n, v\rangle_{L^2(\Omega)}- \langle u_n,\cL^{t(s)} v\rangle_{L^2(\Omega)}=0,
\end{split}
\end{equation}
since $\cL^{t(s)}_{\mathscr{D}_{t(s)}}$ is self-adjoint. Passing to the limit in \eqref{w20w}, one obtains
\begin{equation}\lb{w20}
\omega\left((\gaD u,\gaN^{\cL^{t(s)}} u),(\gaD v, \gaN^{\cL^{t(s)}} v)\right)=0, \text{\ for all}\ v\in \mathscr D_{t(s)}.
\end{equation}
By the second Green identity \eqref{e13}
\begin{equation}\lb{w21}
\omega\left((\gaD u, \gaN^{\cL^{t(s)}} u),(\gaD v, \gaN^{\cL^{t(s)}} v)\right)=\langle\cL^{t(s)} u, v\rangle_{L^2(\Omega)}- \langle u,\cL^{t(s)} v\rangle_{L^2(\Omega)}.
\end{equation} 
From \eqref{w20} and \eqref{w21} one infers 
\begin{equation}\lb{w22}
\langle\cL^{t(s)} u, v\rangle_{L^2(\Omega)}- \langle u,\cL^{t(s)} v\rangle_{L^2(\Omega)}=0,
\end{equation} 
for all $v\in \mathscr D_{t(s)}$. Combining \eqref{w22} and the fact that $\cL^{t(s)}_{\mathscr{D}_{t(s)}}$ is self-adjoint we conclude that $u\in \mathscr D_{t(s)}$ and thus that map \eqref{w17} is onto.

In order to show that the pair $\left(\cK_{\lambda(s),t(s)},\cG_{t(s)}\right)$ is Fredholm we need to check the following assertions,
\begin{align}
&\dim\left(\cK_{\lambda(s),t(s)}\cap\cG_{t(s)}\right)<\infty\text{\ and\ }\codim\left(\cK_{\lambda(s),t(s)}+\cG_{t(s)}\right)<\infty,\lb{yy15}\\
&\cK_{\lambda(s),t(s)}+\cG_{t(s)} \text{\ is closed in\ }\bndro.\lb{yy16}
\end{align}
The first inequality in \eqref{yy15} follows from the fact that $\cL^{t(s)}_{\mathscr D_{t(s)}}-\lambda(s)$ is a Fredholm operator and that map \eqref{w17} is bijective. To show the second one, we observe that
\begin{align}
\begin{split}
&\codim\left(\cK_{\lambda(s),t(s)}+\cG_{t(s)}\right)=\dim\left(\cK_{\lambda(s),t(s)}+\cG_{t(s)}\right)^{\circ}\lb{w25w}\\
&=\dim\left((\cK_{\lambda(s),t(s)})^{\circ}\cap(\cG_{t(s)})^{\circ}\right)=\dim\left(\cK_{\lambda(s),t(s)}\cap\cG_{t(s)}\right)<\infty, 
\end{split}
\end{align}
because both $\cK_{\lambda(s),t(s)}$ and $\cG_{t(s)}$ are Lagrangian subspaces.
Next we  show \eqref{yy16}. Let us notice that 
\begin{align}
\begin{split}
{\bf K}_{\lambda(s),t(s)}+\mathscr D_{t(s)}=\big\{u\in\cD^1_{\cL^{t(s)}}(\Omega):\cL^{t(s)}u-\lambda(s)x=\cL^{t(s)}v-\lambda(s)v,&\lb{w24}\\
\ \text{in}\ (H^{1}_0(\Omega))^* \text{for some}\ v\in \mathscr D_{t(s)}&\big\},
\end{split}
\end{align}
(a similar equality first appeared in \cite[Proposition 3.5]{BbF95} in the context of strong kernel of $\cL^{t(s)}-\lambda(s)$).  
Utilizing \eqref{w24} and the fact that the operator $\cL^{t(s)}-\lambda(s)$ is Fredholm we will show that ${\bf K}_{\lambda(s),t(s)}+\mathscr {D}_{t(s)} $ is closed in $\cD_{\cL^{t(s)}}^1(\Omega)$. Indeed, if
\begin{equation*}
u_n\in\left({\bf K}_{\lambda(s),t(s)}+\mathscr D_{t(s)}\right) , n\geq 1, \text{\ and\ } u_n\rightarrow u \text{\ in\ } \cD_{\cL^{t(s)}}^1(\Omega),
\end{equation*}
then
\begin{align}
&\cL^{t(s)}u_n-\lambda(s) u_n=\cL^{t(s)}v_n-\lambda(s)v_n,\ \text{for some}\ v_n\in\mathscr D_{t(s)},\ n\geq 1.\lb{w25}
\end{align}
Since $\cL^{t(s)} \in\cB\left(\cD^1_{\cL^{t(s)}}(\Omega), L^2(\Omega)\right)$ then
\begin{align}
&\cL^{t(s)}u_n-\lambda(s)u_n\rightarrow \cL^{t(s)}u-\lambda(s)u,\ n\rightarrow \infty,\ \text{in $L^2(\Omega)$},\lb{w26}
\end{align}
moreover, since the operator  $\cL^{t(s)}_{\mathscr D_{t(s)}}-\lambda(s)$ is Fredholm, one has
\begin{equation}\lb{w27}
\cL^{t(s)}v_n-\lambda(s)v_n\rightarrow \cL^{t(s)}v-\lambda(s)v,\ n\rightarrow \infty,\ \text{in $L^2(\Omega)$}, 
\end{equation}
for some $v\in \mathscr  D_{t(s)}$. Combining \eqref{w25}, \eqref{w26}, \eqref{w27}, one obtains
\begin{equation*}
\cL^{t(s)}u-\lambda(s)u=\cL^{t(s)}v-\lambda(s)v,
\end{equation*}
hence, $u\in\left( {\bf K}_{\lambda(s),t(s)}+\mathscr D_{t(s)}\right)$.
Next, the linear operator 
\begin{equation*}
\tr_{\cL^{t(s)}}: \big({\bf K}_{\lambda(s),t(s)}+\mathscr D_{t(s)} \big)\rightarrow \bndro,
\end{equation*}
acting from the Banach space $\big({\bf K}_{\lambda(s),t(s)}+\mathscr D_{t(s)} \big)$ equipped with $\cD_{\cL^{t(s)}}^1(\Omega)-$norm to the Hilbert space $\bndro$, is bounded. Furthermore, its range
\begin{equation*}
\tr_{\cL^{t(s)}} \big({\bf K}_{\lambda(s),t(s)}+\mathscr D_{t(s)} \big)=\left(\cK_{\lambda(s),t(s)}+\tr_{\cL^{t(s)}}(\mathscr D_{\cL^{t(s)}})\right)
\end{equation*}
has finite codimension. Therefore, by  \cite[Corollary 2.3]{GGK1}, the subset 
\begin{equation*}
\cK_{\lambda(s),t(s)}+\tr_{\cL^{t(s)}}(\mathscr D_{\cL^{t(s)}})\subset \bndr
\end{equation*}
is closed. Hence, $\left(\cK_{\lambda(s),t(s)}+\cG_{t(s)} \right)$ is also closed.
\end{proof}
\subsection{The Maslov and Morse indices}
We are ready to state the principal result of this section.  In the following theorem we consider a one-parameter family of self-adjoint extensions of uniformly elliptic operators. One of our main assumptions is that each operator from this family is semibounded from below. This assumption is satisfied for all standard self-adjoint extensions such as the Dirichlet, Neumann, Robin, and periodic Laplace operators. However, it is not evident that all self-adjoint extensions of an elliptic operator  are  necessarily semibounded from below (cf. \eqref{smb}). Next, we notice that the relations between the Maslov and Morse indices have been extensively studied by many authors cf., e.g., \cite[Theorem 5.1]{BbF95}, \cite[Theorem 2.4, Theorem 2.5]{DJ11}, \cite[Theorem 1.5]{BZ3}, \cite[Theorem 4.5.4]{BZ1}, \cite[Theorem 1.3, Theorem 1.4]{CJLS}, \cite[Theorem 1]{CJM1}, \cite[Theorem 1.5]{HS}. The work in this direction was originated in \cite{BbF95}, where the authors considered the Lagrangian planes formed by the abstract traces of strong solutions (i.e., by the abstract traces of the kernels of adjoint operators) assuming that the domain of the adjoint operator is fixed. Later this assumption was relaxed in a series of works \cite{BZ3, BZ1, BZ2, BZ4} by considering only those extensions whose domains are contained in a fixed subspace. We, on the other hand, consider the Lagrangian planes formed by the weak traces of weak solutions which allows us to reduce regularity assumptions for the domains of self-adjoint extensions. 
\begin{theorem}\lb{w33}
Assume Hypothesis \ref{q1} and recall the differential expressions \eqref{r7}.   Let $\mathscr D_t\subset \cD_{\cL^t}^1(\Omega)$, $ t\in\cI$, and assume that the linear operator $\cL^t_{\mathscr D_t}$ acting in $L^2(\Omega)$ and given by
\begin{equation*}
\cL^t_{\mathscr D_t} u:=\cL^tu,\ u\in \dom\left(\cL^t_{\mathscr D_t}\right):={\mathscr D_t},
\end{equation*}
is self-adjoint  with the property
\begin{equation*}
\spec_{ess}\left(\cL^t_{\mathscr{D}_t}\right)\cap(-\infty, 0]=\emptyset, \text{\ for all\ } t\in\cI.
\end{equation*}
Assume further that there exists $\lambda_{\infty}<0$, such that
\begin{equation}\lb{smb}
\ker\left(\cL^t_{\mathscr D_t}-\lambda\right)=\{0\}, \text{\ for all\ } \lambda\leq \lambda_{\infty}, t\in\cI.
\end{equation}
Suppose, finally,  that the path
\begin{equation*}
t\mapsto \cG_t:=\overline{\tr_{\cL^t}\big(\mathscr D_t\big)},\ t\in \cI,
\end{equation*}
is contained in $C\left(\cI,\Lambda\left(\bndro\right)\right)$.

Then
\begin{equation}\lb{w32}
\mo\left(\cL^{\alpha}_{\mathscr D_{\alpha}}\right)-\mo\big(\cL^{\beta}_{\mathscr D_{\beta}}\big)=\mi\left((\cK_{0,t},\cG_t)|_{t\in\cI}\right),
\end{equation}
the Lagrangian plane $\cK_{0,t}$ is defined by \eqref{w15}. 
\end{theorem}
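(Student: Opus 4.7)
The plan is to apply homotopy invariance of the Maslov index along the closed rectangular loop $\Gamma = \cup_{j=1}^4 \Gamma_j$ depicted in Figure 1, parametrized by \eqref{w17w}--\eqref{21}. By Theorem \ref{w7}\,ii) (which applies since $\spec_{ess}(\cL^t_{\mathscr D_t})\cap(-\infty,0]=\emptyset$ for all $t\in\cI$, so that $\cL^{t(s)}_{\mathscr D_{t(s)}}-\lambda(s)$ is Fredholm along the entire loop) the pair $(\cK_{\lambda(s),t(s)},\cG_{t(s)})$ lies in the Fredholm-Lagrangian Grassmannian for every $s\in\Sigma$, and continuity of the path follows from Theorem \ref{w7}\,iii) together with the standing assumption on $t\mapsto\cG_t$. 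Consequently the Maslov index around the closed loop vanishes:
$$\mi(\Gamma_1)+\mi(\Gamma_2)+\mi(\Gamma_3)+\mi(\Gamma_4)=0.$$

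Next I would evaluate the contribution from each side. Along $\Gamma_4$, where $\lambda\equiv\lambda_\infty$, hypothesis \eqref{smb} combined with the bijection \eqref{w17} forces $\cK_{\lambda_\infty,t}\cap\cG_t=\{0\}$ for all $t\in\cI$, so there are no crossings and $\mi(\Gamma_4)=0$. Along $\Gamma_2$ (with $\lambda\equiv0$ and $t$ traversing $\cI$) the Maslov index is, by definition, the quantity $\mi\bigl((\cK_{0,t},\cG_t)|_{t\in\cI}\bigr)$ appearing on the right of \eqref{w32}. The crucial computations therefore concern $\Gamma_1$ and $\Gamma_3$, where $t$ is held fixed at $\alpha$ and $\beta$ respectively and only $\lambda$ varies.

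For $\Gamma_1$ (with $t\equiv\alpha$ and $\lambda$ increasing from $\lambda_\infty$ to $0$) the crossings are exactly the eigenvalues of $\cL^\alpha_{\mathscr D_\alpha}$ lying in $(\lambda_\infty,0]$, and their multiplicity coincides with $\dim(\cK_{\lambda,\alpha}\cap\cG_\alpha)$ by \eqref{w11}. Differentiating the Lagrangian family $\lambda\mapsto\cK_{\lambda,\alpha}$ and pairing the resulting operator with an eigenfunction $u$ in the intersection via the symplectic form \eqref{dd9} (equivalently, via the abstract Green identity \eqref{bb88} applied to $u$ and $\partial_\lambda u$) produces the crossing form $\cQ_{\lambda_*,\cG_\alpha}(u,u) = -\|u\|_{L^2(\Omega)}^2$, which is strictly negative definite on $\cK_{\lambda_*,\alpha}\cap\cG_\alpha$. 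Hence every crossing is regular and its signature equals $-\dim\ker(\cL^\alpha_{\mathscr D_\alpha}-\lambda_*)$. Invoking Theorem \ref{masform} together with the endpoint convention from Remark \ref{ba23} (no crossing at the left endpoint by \eqref{smb}; $n_+(\cQ_0)=0$ at the right endpoint because the form is non-positive), the contributions collapse to
$$\mi(\Gamma_1)=-\!\!\!\sum_{\lambda_\infty<\lambda\le 0}\dim\ker(\cL^\alpha_{\mathscr D_\alpha}-\lambda)=-\mo(\cL^\alpha_{\mathscr D_\alpha}).$$
The analysis of $\Gamma_3$ is identical except that $\lambda$ now decreases from $0$ to $\lambda_\infty$, which flips the sign of $\dot R_s$ and therefore of the crossing form; combined with the endpoint rule this yields $\mi(\Gamma_3)=+\mo(\cL^\beta_{\mathscr D_\beta})$. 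Substituting the four values into the loop identity above immediately gives \eqref{w32}.

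The main technical obstacle is the computation of the crossing form on the weak-solution side: since $\mathscr D_t\subset H^1(\Omega)$ only, the usual $H^2$ integration-by-parts arguments are unavailable, and the identification $\cQ_{\lambda_*}(u,u)=-\|u\|_{L^2(\Omega)}^2$ must be carried out in the low-regularity duality $\langle\cdot,\cdot\rangle_{-1/2}$ using Propositions \ref{e10}--\ref{e12}, together with a careful construction of the representing operator $R_\lambda$ of Definition \ref{def21} for the path $\lambda\mapsto\cK_{\lambda,\alpha}$ from the weak formulation of ${\bf K}_{\lambda,\alpha}$. Secondary care points include verifying regularity of all interior crossings (which follows because the crossing form is sign-definite, hence nondegenerate whenever it is not identically zero) and handling the endpoint at $\lambda=0$ correctly so that zero eigenvalues of $\cL^\alpha_{\mathscr D_\alpha}$ and $\cL^\beta_{\mathscr D_\beta}$ are \emph{not} counted in $\mo$, which is precisely what the convention of Remark \ref{ba23} guarantees.
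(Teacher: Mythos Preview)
Your proposal is correct and follows essentially the same route as the paper: the homotopy/catenation argument on the rectangular loop, vanishing on $\Gamma_4$ via \eqref{smb}, and the key computation that the crossing form along $\Gamma_1$ (resp.\ $\Gamma_3$) equals $-\|u\|_{L^2(\Omega)}^2$ via the second Green identity, yielding $-\mo(\cL^{\alpha}_{\mathscr D_{\alpha}})$ and $+\mo(\cL^{\beta}_{\mathscr D_{\beta}})$. One small slip: your displayed sum on $\Gamma_1$ should run over $\lambda_\infty<\lambda<0$ (not $\lambda\le 0$), consistent with your own observation that $n_+(\cQ_{0,\cG_\alpha})=0$; also note that the paper makes the continuity $s\mapsto u_s$ explicit via the uniform elliptic estimate of Lemma~\ref{q3}, which is the concrete mechanism behind the ``low-regularity'' step you flagged.
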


\begin{proof}
We will compute the Maslov index of the path $s\mapsto (\cK_{\lambda(s),t(s)},\cG_{t(s)})$ on each interval $\Sigma_1, \Sigma_2, \Sigma_3, \Sigma_4$  parameterizing the respective sides of the boundary of the square $[\lambda_{\infty},0]\times [\alpha,\beta]$, see Figure 1,  and use a catenation argument to determine the Maslov index on $\Sigma$. To this end we split the proof into four parts. 	

{\bf Step 1.} In this step we show that
\begin{align}
&\mi\left(\cK_{{\alpha},\lambda(s)}|_{s\in\Sigma_1},\cG_{\alpha}\right)=-\mo\left(\cL_{\alpha}\right).\lb{w34}
\end{align}
The proof goes along the lines of the argument in \cite{BbF95}, where a variant of \eqref{w34} is established in the context of strong kernels, abstract trace maps, and fixed domains of the maximal operators. In order to obtain \eqref{w34} in our setting, we intend to prove that each crossing on $\Sigma_1$ is negative (hence, non-degenerate), and use \eqref{ab17} to verify that geometric multiplicities of negative eigenvalues of $\cL^{\alpha}_{\mathscr D_{\alpha}}$ add up to minus the Maslov index. 
Let $s_*\in (\lambda_{\infty}, 0)$ be a conjugate point, i.e. $\cK_{\lambda(s_*),\alpha}\cap\cG_{\alpha}\not=\{0\}$. There exists a small neighbourhood $\Sigma_{s_*}\subset(\lambda_{\infty}, 0)$ of $s_*$ and a family of operators $R_{s+s_*}$ so that
\begin{equation}\lb{w35}
(s+s_*)\mapsto R_{(s+s_*)}\ \text{in}\  C^1\big(\Sigma_{s_*}, \cB(\cK_{\lambda(s_*),\alpha}, (\cK_{\lambda(s_*),\alpha})^{\perp})\big),\ R_{s_*}=0,
\end{equation} 
and 
\begin{equation}\lb{w36}
\cK_{\lambda(s),\alpha}=\{(\phi,\psi)+R_{s+s_*}(\phi,\psi)\big| (\phi,\psi)\in \cK_{\lambda(s_*),\alpha} \}\ \text{for all }\ (s+s_*)\in \Sigma_{s_*},
\end{equation}
see  \cite[Lemma 3.8]{CJLS}. 
Let us fix $(\phi_0,\psi_0)\in \cK_{\lambda(s_*),\alpha}$ and consider the family 
\begin{equation*}
(\phi_s,\psi_s):=(\phi_0,\psi_0)+R_{(s+s_*)}(\phi_0,\psi_0)\text{\ with small\ }|s|.
\end{equation*}
Since $(\phi_s,\psi_s)\in\cK_{\lambda(s),\alpha}$, by the unique continuation principle (cf. \eqref{UCP}), there exists a unique $u_s\in\cD_{\cL^{\alpha}}^1(\Omega)$ such that
\begin{equation*}
\tr_{\cL^{\alpha}}u_s=(\phi_s,\psi_s)\text{\ for small\ }|s|.
\end{equation*}
Next, using the second Green identity \eqref{e13}, we calculate:
\begin{align*}
&\omega\left((\phi_0,\psi_0), R_{(s+s_*)}(\phi_0,\psi_0)\right)=\overline{\langle\psi_s ,\phi_0 \rangle}_{{-1/2}}-\langle\psi_0 , \phi_s \rangle_{{-1/2}}\no\\
&\quad=\langle\cL^{\alpha} u_0, u_s-u_0\rangle_{L^2(\Omega)}- \langle u_0,\cL^{\alpha}( u_s-u_0)\rangle_{L^2(\Omega)}\no\\
&\quad=\langle(\cL^{\alpha} -\lambda(s_*))u_0, u_s-u_0\rangle_{L^2(\Omega)}- \langle u_0,(\cL^{\alpha}-\lambda(s_*))( u_s-u_0)\rangle_{L^2(\Omega)}\no\\
&\quad=- \langle u_0,(\cL^{\alpha}-\lambda(s_*)) u_s\rangle_{L^2(\Omega)}\no\\
&\quad=- \langle u_0,(\cL^{\alpha}-\lambda(s+s_*)) u_s\rangle_{L^2(\Omega)}+ \langle u_0,(\lambda(s_*)-\lambda(s+s_*)) u_s\rangle_{L^2(\Omega)}\no\\
&\quad=-\langle u_0,s u_s\rangle_{L^2(\Omega)}.\
\end{align*}
The mapping $s\mapsto u_s\in H^1(\Omega)$ is continuous at $0$, since, using the standard elliptic estimate in Lemma \ref{q3} given below, 
\begin{align}
\begin{split}
\|u_s-u_0\|_{H^1(\Omega)}&\leq C\big\|\tr_{\cL^{\alpha}}(u_s-u_0)\big\|_{\bndro}\lb{w37}\\
&=C \|(\phi_s-\phi_0,\psi_s-\psi_0)\|_{\bndro},
\end{split}
\end{align}
where $C>0$ does not depend on $s$. We proceed by evaluating  the crossing form from Definition \ref{def21} {\it (ii)} 
\begin{align}
&\cQ_{s_*,\cG_{\alpha}}\left((\phi_0,\psi_0),(\phi_0,\psi_0)\right):=\frac{d}{ds}\omega\left((\phi_0,\psi_0), R_{(s+s_*)}(\phi_0,\psi_0)\right)\big|_{s=0}\no\\
&=\lim\limits_{s\rightarrow 0} \frac{\omega\left((\phi_0,\psi_0), R_{(s+s_*)}(\phi_0,\psi_0)\right)}{s}=\lim\limits_{s\rightarrow 0} \frac{-\langle u_0,s u_s\rangle_{L^2(\Omega)}}{s}=-\|u_0\|^2_{L^2(\Omega)}.\no
\end{align}
Therefore, the crossing form is negative definite at all conjugate points on $[\lambda_{\infty}, 0]$ and, using \eqref{ab17}, one obtains
\begin{align}
&\mi\left(\cK_{\lambda(s), \alpha}|_{s\in\Sigma_1},\cG_{\alpha}\right)=-n_-\left(\cQ_{\lambda_{\infty},\cG_{\alpha}}\right)+\sum\limits_{\substack{\lambda_{\infty}<s<0:\\
	\cK_{\lambda(s),\alpha}\cap \cG_{\alpha}\not=\{0\}	}}\sign\ \cQ_{s,\cG_{\alpha}}\no\\
&\quad+n_+(\cQ_{0,\cG_{\alpha}})=-\sum\limits_{\lambda_{\infty}\leq s< 0}\dim \ker\left(\cL^{\alpha}_{\mathscr D_{\alpha}}-\lambda(s)\right)=-\mo\left(\cL^{\alpha}_{\mathscr D_{\alpha}}\right),\lb{w38}
\end{align}
where we employed $n_+\left(\cQ_{0,\cG_{\alpha}}\right)=0$, and the fact that there are no crossings to the left of $\lambda_{\infty}$.

{\bf Step 2.} A similar computation can be carried out in case $s\in\Sigma_3$, leading to the analog of \eqref{w34}, 
\begin{align}
&\mi\left(\cK_{{\alpha},\lambda(s)}|_{s\in\Sigma_3},\cG_{\alpha}\right)=\mo\left(\cL_{\alpha}\right).\lb{w39}
\end{align}

{\bf Step 3.} Since, by assumptions,  $\ker(\cL^t_{\mathscr D_t}-\lambda)=\{0\}$ for all $\lambda\leq \lambda_{\infty}, t\in\cI$, there are no crossings on $\Sigma_4$, therefore, the Maslov index  vanishes on this interval 
\begin{equation} 
\mi\big((\cK_{t(s),\lambda_{\infty}},\cG_{t(s)})|_{s\in\Sigma_4}\big)=0.\lb{w40}
\end{equation}

{\bf Step 4.} In this step we will combine \eqref{w34}, \eqref{w39}, \eqref{w40}, and the homotopy invariance of the Maslov index to obtain \eqref{w32}. Since the curve $\Gamma$, cf., \eqref{w17w}, can be contracted to a point, one has
\begin{equation}\lb{w41}
	\mi\left((\cK_{t(s),\lambda{(s)}},\cG_{t(s)})|_{s\in\Sigma}\right)=0.
\end{equation}
On the other hand, due to the catenation property of the Maslov index,
\begin{align}
\begin{split}
&\mi\left((\cK_{t(s),\lambda{(s)}},\cG_{t(s)})|_{s\in\Sigma}\right)=\mi\left((\cK_{t(s),\lambda{(s)}},\cG_{t(s)})|_{s\in\Sigma_1}\right)\lb{w42}\\
&\quad+\mi\left((\cK_{t(s),\lambda{(s)}},\cG_{t(s)})|_{s\in\Sigma_2}\right)+\mi\left((\cK_{t(s),\lambda{(s)}},\cG_{t(s)})|_{s\in\Sigma_3}\right)\\
&\quad+\mi\left((\cK_{t(s),\lambda{(s)}},\cG_{t(s)})|_{s\in\Sigma_4}\right).
\end{split}
\end{align} 
Combining \eqref{w34}, \eqref{w39}, \eqref{w40}, \eqref{w41}, \eqref{w42}, one obtains \eqref{w32}.
\end{proof}

\begin{lemma}\lb{q3} Assume Hypothesis \ref{q1}. Then there exists a positive constant $C>0$ independent of $s$ such that if $u\in H^1(\Omega)$ is a weak solutions to $\cL^su=0, s\in\cI$ then
\begin{equation}\lb{q5}
	\|u\|_{H^1(\Omega)}\leq C \big\|\tr_{\cL^s}u\big\|_{\bndro}, \text{\ for all\ }s\in\cI.
\end{equation}
\end{lemma}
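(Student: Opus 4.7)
The plan is to prove \eqref{q5} by a compactness/contradiction argument, leveraging the continuity of the coefficients in the parameter $s$ (Hypothesis \ref{q1}), the unique continuation principle \eqref{UCP}, and uniform ellipticity \eqref{q11}. Suppose, toward a contradiction, that no such constant $C$ exists; then one can extract sequences $s_n \in \cI$ and $u_n \in H^1(\Omega)$ with $\cL^{s_n} u_n = 0$ weakly, $\|u_n\|_{H^1(\Omega)} = 1$, and $\|\tr_{\cL^{s_n}} u_n\|_{\bndro} \to 0$. By compactness of $\cI$, pass to a subsequence so that $s_n \to s_0 \in \cI$. Since $\{u_n\}$ is bounded in $H^1(\Omega)$, one extracts a further subsequence with $u_n \rightharpoonup u_0$ weakly in $H^1(\Omega)$ and, by Rellich--Kondrachov, $u_n \to u_0$ strongly in $L^2(\Omega)$.

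Next, I would identify $u_0$ as a weak solution of $\cL^{s_0} u_0 = 0$ satisfying $\tr_{\cL^{s_0}} u_0 = 0$. Because the coefficients $a^{s}_{jk}, a^s_j, a^s$ are continuous in $s$ with values in $L^\infty(\Omega)$, the sesquilinear forms $\mathfrak{l}^{s_n}[\cdot,\cdot]$ converge to $\mathfrak{l}^{s_0}[\cdot,\cdot]$ in the appropriate sense, so that the weak formulation of $\cL^{s_n} u_n = 0$ passes to the limit, giving $\cL^{s_0} u_0 = 0$ weakly. The Dirichlet traces satisfy $\gaD u_n \to 0$ in $H^{1/2}(\partial\Omega)$ by assumption, hence $\gaD u_0 = 0$. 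For the Neumann trace, test the first Green identity \eqref{e14} with an arbitrary $v \in H^1(\Omega)$: the left-hand side $\mathfrak{l}^{s_n}[u_n,v]$ converges to $\mathfrak{l}^{s_0}[u_0,v]$, while the boundary pairing on the right-hand side vanishes in the limit since $\|\gaN^{\cL^{s_n}} u_n\|_{H^{-1/2}(\partial\Omega)} \to 0$ and $\|\gaD v\|_{H^{1/2}(\partial\Omega)}$ is bounded; rewriting via Green's identity for $s_0$ then forces $\gaN^{\cL^{s_0}} u_0 = 0$ by surjectivity of $\gaD$ onto $H^{1/2}(\partial\Omega)$. Thus $\tr_{\cL^{s_0}} u_0 = 0$, and the unique continuation principle \eqref{UCP} yields $u_0 = 0$.

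To derive the contradiction with $\|u_n\|_{H^1(\Omega)} = 1$, I would upgrade the weak $H^1$-convergence $u_n \rightharpoonup 0$ to strong $H^1$-convergence. Apply the first Green identity \eqref{e14} with $u = v = u_n$:
\begin{equation*}
\mathfrak{l}^{s_n}[u_n,u_n] = \langle \cL^{s_n} u_n, u_n\rangle_{L^2(\Omega)} + \langle \gaN^{\cL^{s_n}} u_n, \gaD u_n\rangle_{-1/2} = \langle \gaN^{\cL^{s_n}} u_n, \gaD u_n\rangle_{-1/2},
\end{equation*}
whose modulus is bounded by $\|\gaN^{\cL^{s_n}} u_n\|_{H^{-1/2}}\|\gaD u_n\|_{H^{1/2}} \to 0$. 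Taking real parts of $\mathfrak{l}^{s_n}[u_n,u_n]$, the principal part obeys $\Re\sum_{j,k}\langle a^{s_n}_{jk}\partial_k u_n,\partial_j u_n\rangle_{L^2} \geq c\|\nabla u_n\|_{L^2}^2$ by uniform ellipticity \eqref{q11}, while the first-order and zeroth-order terms are controlled by $C(\|\nabla u_n\|_{L^2}\|u_n\|_{L^2} + \|u_n\|_{L^2}^2)$ with $C$ independent of $n$ (by the uniform $L^\infty$ bounds on the coefficients). A Cauchy--Schwarz absorption argument then yields $\tfrac{c}{2}\|\nabla u_n\|_{L^2}^2 \leq |\mathfrak{l}^{s_n}[u_n,u_n]| + C'\|u_n\|_{L^2}^2 \to 0$, and combined with $u_n \to 0$ in $L^2(\Omega)$, this gives $\|u_n\|_{H^1(\Omega)} \to 0$, contradicting $\|u_n\|_{H^1(\Omega)} = 1$.

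The main technical obstacle is the step showing $\gaN^{\cL^{s_0}} u_0 = 0$, since the conormal derivative operator itself depends on $s$ and one only has weak $H^1$-convergence of $u_n$ together with norm convergence of the traces to zero; routing the argument through the first Green identity (rather than trying to pass directly to the limit inside the conormal expression \eqref{e2}) bypasses this difficulty cleanly, since all boundary pairings involved are of the form $\langle \text{something small}, \text{something bounded}\rangle_{-1/2}$.
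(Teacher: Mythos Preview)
Your argument is correct and uses the same core ingredients as the paper's proof: a contradiction/compactness scheme, convergence of the forms $\mathfrak{l}^{s_n}$ as $s_n\to s_0$, the first Green identity \eqref{e14}, uniform ellipticity \eqref{q11} with an absorption estimate, and the unique continuation principle \eqref{UCP}. The only organizational difference is that the paper first establishes the intermediate a priori bound $\|u\|_{H^1}^2\leq C(\|u\|_{L^2}^2+\|\cL^s u\|_{L^2}^2+\|\tr_{\cL^s}u\|^2)$ for all $u\in\cD^1_{\cL^s}(\Omega)$ and then normalizes the contradicting sequence in $L^2$ (so $H^1$-boundedness follows from that bound and the contradiction is simply $\|u_0\|_{L^2}=1$ with $u_0=0$), whereas you normalize in $H^1$ and instead append a final step upgrading weak to strong $H^1$-convergence via $\mathfrak{l}^{s_n}[u_n,u_n]\to 0$; the two routes are equivalent, with yours slightly more streamlined for the lemma as stated and the paper's intermediate inequality a somewhat more general estimate.
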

\begin{proof} Recall the function space \eqref{cc4}. For arbitrary $u\in \cD^1_{\cL^s}(\Omega),$ $s\in\cI$, one has 
\begin{equation}\lb{q6}
\mathfrak{l}^s[u,u]=\langle\cL^su,u\rangle_{L^2(\Omega)}+\langle \gaN^{\cL^s,1}u,\gaD u\rangle_{H^{-1/2}(\partial\Omega)},
\end{equation}
where
\begin{align}
\begin{split}
\mathfrak{l}^s[u,v]&=\sum_{j,k=1}^{n}\langle a^s_{jk}\partial_ku,\partial_j v\rangle_{L^2(\Omega)}+\sum_{j=1}^{n}\langle a^s_j\partial_ju,v\rangle_{L^2(\Omega)}\\
&+\sum_{j=1}^{n}\langle u,a^s_j\partial_jv\rangle_{L^2(\Omega)}+\langle a^su,v\rangle_{L^2(\Omega)},\ u,v\in H^1(\Omega), s\in\cI.\lb{q112}
\end{split}
\end{align}
Our immediate objective is to show that the inequality,
\begin{align}
\begin{split}
\| u\|_{H^1(\Omega)}^2\leq C\big(\| u\|_{L^2(\Omega)}^2+\|\cL^su\|_{L^2(\Omega)}^2+\|\tr_{\cL^s}u\|_{\bndro}^2\big)\lb{qq13},
\end{split}
\end{align}
holds for some $C>0$ independent of $s$, and all $s\in\cI$. To this end, we first notice that by the elliptic property \eqref{q11}, one has
\begin{align}
&\sum_{j,k=1}^{n}\langle a^s_{jk}\partial_ku,\partial_j u\rangle_{L^2(\Omega)}\geq c\|\nabla u\|^2_{L^2(\Omega)}.\lb{q111}
\end{align}
Second, using \eqref{q6} and \eqref{q112} we obtain
\begin{align}
&\left|\sum_{j,k=1}^{n}\langle a^s_{jk}\partial_ku,\partial_j u\rangle_{L^2(\Omega)}\right|\leq \left|\sum_{j=1}^{n}\langle a^s_j\partial_ju,u\rangle_{L^2(\Omega)}
\right|+\left|\sum_{j=1}^{n}\langle u,a_j^s\partial_ju\rangle_{L^2(\Omega)}\right|\lb{q7}\\
&\quad +\left|\langle a^su,u\rangle_{L^2(\Omega)}\right|+\left|\langle\cL^su,u\rangle_{L^2(\Omega)}\right|+\left|\langle \gaN^{\cL^s,1}u,\gaD u\rangle_{H^{-1/2}(\partial\Omega)}\right|\lb{q8}.
\end{align}
Next, the Cauchy--Schwarz inequality together with \eqref{q7}, \eqref{q8} yield
\begin{align}
&\left|\sum_{j,k=1}^{n}\langle a^s_{jk}\partial_ku,\partial_j u\rangle_{L^2(\Omega)}\right|\leq 2\sup\limits_{\substack{s\in[\alpha, \beta], x\in\overline{\Omega},\\ 1\leq j\leq n}}\|a_j^s(x)\|_{\C^m}\|\nabla u\|_{L^2(\Omega)}\| u\|_{L^2(\Omega)}\lb{qq77}\\
&\quad +\sup\limits_{s\in[\alpha, \beta], x\in\overline{\Omega}}\|a^s(x)\|_{\C^{m\times m}}\|u\|^2_{L^2(\Omega)}+ \| \cL^s u\|_{L^2(\Omega)}\| u\|_{L^2(\Omega)}\lb{qq88}\\
&\quad + \|\gaN^{\cL^s,1}u\|_{H^{-1/2}(\partial\Omega)}\|\gaD u\|_{H^{1/2}(\partial\Omega)}.\lb{qq99}
\end{align}
Finally, the inequalities 
\begin{align}
&\|\nabla u\|_{L^2(\Omega)}\| u\|_{L^2(\Omega)}\leq  \frac{\|u\|_{L^2(\Omega)}^2}{2\varepsilon^2}+\frac{\varepsilon^2\|\nabla u\|_{L^2(\Omega)}^2}{2},\ \text{with\  }\varepsilon>0 \text{\ small enough},\no\\
& \| \cL^s u\|_{L^2(\Omega)}\| u\|_{L^2(\Omega)}\leq \frac1 2\left(\|u\|_{L^2(\Omega)}^2+\| \cL^s u\|_{L^2(\Omega)}^2\right),\no\\
&\|\gaN^{\cL^s,1}u\|_{H^{-1/2}(\partial\Omega)}\|\gaD u\|_{H^{1/2}(\partial\Omega)}\leq \frac1 2\left(\|\gaN^{\cL^s,1}u\|_{H^{-1/2}(\partial\Omega)}^2+\|\gaD u\|_{H^{1/2}(\partial\Omega)}^2\right),\no
\end{align} 
together with \eqref{qq77}-\eqref{qq99} imply
\begin{align}
\begin{split}
&\left|\sum_{j,k=1}^{n}\langle a^s_{jk}\partial_ku,\partial_j u\rangle_{L^2(\Omega)}\right|\leq C_1\big(\| u\|_{L^2(\Omega)}^2+\|\cL^su\|_{L^2(\Omega)}^2\lb{q9}\\
& \quad +\|\gaN^{\cL^s}u\|_{H^{-1/2}(\partial\Omega)}^2+\|\gaD u\|_{H^{1/2}(\partial\Omega)}^2\big)+\frac{c}{2}\|\nabla u\|_{L^2(\Omega)}^2, s\in\cI,
\end{split}
\end{align}
where $0<C_1=C_1(a, a_j, n,\Omega)$, and $c>0$ is from \eqref{q11}. Combining \eqref{q111} and \eqref{q9}, one infers \eqref{qq13}.

We intend to derive from \eqref{qq13} yet a stronger inequality, 
\begin{align}
\begin{split}
\|u\|_{H^1(\Omega)}^2\leq C\left(\|\cL^su\|_{L^2(\Omega)}^2+\|\tr_{\cL^s}u\|_{\bndro}^2\right)\lb{q14},\ s\in\cI,
\end{split}
\end{align}
which trivially implies \eqref{q5}.
We prove \eqref{q14} by contradiction: Assume that there exist
\begin{equation*}
s_n\in \Sigma, u_n\in \cD^1_{\cL^{s_n}}(\Omega), n\geq 1,
\end{equation*}
such that 
\begin{align}
\begin{split}
\|u_n\|_{H^1(\Omega)}^2>n\left(\|\cL^{s_n}u_n\|_{L^2(\Omega)}^2+\|\tr_{\cL^{s_n}}u_n\|_{\bndro}^2\right)\lb{q15},\ n\geq 1.
\end{split}
\end{align}
Without loss of generality we may assume that 
\begin{equation*}
s_n\rightarrow s_0, n\rightarrow \infty,\text{\ and that\ }\|u_n\|_{L^2(\Omega)}=1, n\geq1.
\end{equation*}
It follows from \eqref{qq13} and \eqref{q15} that the sequence $\{u_n:n\geq 1\}$ is bounded in $H^1(\Omega)$, and therefore that 
\begin{equation}\lb{q16}
\|\cL^{s_n}u_n\|_{L^2(\Omega)}\rightarrow 0\text{\ and\ }\|\tr_{\cL^{s_n}}u_n\|_{\bndro}\rightarrow 0,\text{\ as\ }n\rightarrow\infty.
\end{equation}
Passing to a subsequence if necessary, we have the weak convergence,
\begin{equation}\lb{qw1}
u_n\rightharpoonup u_0, \text{\ as\ } n\rightarrow\infty\text{\ in\ }H^1(\Omega).
\end{equation}
Since $H^1(\Omega)$ is compactly embedded into $L^2(\Omega)$, we conclude that $u_n\rightarrow u_0$, $n\rightarrow\infty$ in $L^2(\Omega)$. 
We claim that
\begin{equation}\lb{q17}
u_0\in\cD^1_{\cL^{s_0}}(\Omega),\ \cL^{s_0}u_0=0,
\end{equation} 
\begin{equation}\lb{q18}
\tr_{\cL^{s_0}}u_0=0.
\end{equation}
Granted \eqref{q17},\eqref{q18}, we notice that the unique continuation principle yields $u_0=0$, which in turn, contradicts the fact that $\|u_0\|_{L^2(\Omega)}=1$, and finishes the proof of \eqref{q14}. 

 It remains to prove the claim. First, we prove \eqref{q17}. For arbitrary $\varphi\in C_0^{\infty}(\Omega)$ the second Green identity yields \eqref{e13}
\begin{align}\lb{q19}
\langle u_n, \cL^{s_n}\varphi\rangle_{L^2(\Omega)}=\langle \cL^{s_n} u_n, \varphi\rangle_{L^2(\Omega)},\ n\geq 1.
\end{align} 
On the other hand, since $\gaD\varphi=0, \gaN^{\cL^{s_n}}\varphi=0$, the first Green identity \eqref{e14} yields
\begin{align}\lb{q20}
\langle \cL^{s_n} u_n, \varphi\rangle_{L^2(\Omega)}=\mathfrak{l}^{s_n}[u_n,\varphi],\ n\geq 1.
\end{align} 
Furthermore, using the first limit in \eqref{q16} and \eqref{qw1}
we obtain
\begin{equation}\lb{q21}
\mathfrak{l}^{s_n}[u_n,\varphi]\rightarrow\mathfrak{l}^{s_0}[u_0,\varphi]\text{\and\ }\langle \cL^{s_n} u_n,\varphi\rangle_{L^2(\Omega)}\rightarrow 0,\text{\ as\ } n\rightarrow\infty. 
\end{equation}
Combining \eqref{q20} and \eqref{q21} we obtain 
\begin{equation}\lb{q22}
0=\mathfrak{l}^{s_0}[u_0,\varphi],\ \text{for arbitrary\ }  \varphi\in C^{\infty}_0(\Omega),
\end{equation}
moreover, by the first Green identity
\begin{equation*}
\mathfrak{l}^{s_0}[u_0,\varphi]=\langle u_0, \cL^{s_0}\varphi\rangle_{L^2(\Omega)}\text{\ for arbitrary\ }  \varphi\in C^{\infty}_0(\Omega).
\end{equation*}
Hence, \eqref{q17} holds.

It remains to check \eqref{q18}. First, the equality $\gaD u_0=0$ holds since, by using the second limit in \eqref{q16},
\begin{equation*}
\gaD u_n\rightharpoonup \gaD u_0\text{\ and\ }\gaD u_n\rightarrow 0,\text{as } n\rightarrow\infty\text{\ in\ }H^{1/2}(\partial\Omega).
\end{equation*}
Next, by the first Green identity
\begin{equation}\lb{q23}
\mathfrak{l}^{s_n}[u_n,f]=\langle \cL^{s_n} u_n, f\rangle_{L^2(\Omega)}+ \langle \gaN^{\cL^{s_n}}u_n,\gaD f\rangle_{H^{-1/2}(\partial\Omega)},\ n\geq 1,
\end{equation}
for arbitrary $f\in H^1(\Omega)$. The left hand-side of \eqref{q23} tends to $\mathfrak{l}^{s_0}[u_0,f]$ (due to the weak convergence of $u_n$), whereas by \eqref{q16}, the right hand-side converges to $0$, as $n\rightarrow 0$, implying  
\begin{equation}
\mathfrak{l}^{s_0}[u_0,f]=0, \text{\ for all\ }f\in H^1(\Omega).
\end{equation}
The first Green identity and \eqref{q17} yield
\begin{equation}\lb{q24}
0=\mathfrak{l}^{s_0}[u_0,f]= \langle \gaN^{\cL^{s_0}}u_0,\gaD f\rangle_{H^{-1/2}(\partial\Omega)}, \text{\ for all\ }f\in H^1(\Omega).
\end{equation}
Finally, since $\gaD:H^1(\Omega)\rightarrow H^{1/2}(\partial\Omega)$ is onto, \eqref{q24} implies $\gaN^{\cL^{s_0}}u_0=0$ as required.
\end{proof}
In the remaining part of this section we illustrate several applications of the general formula \eqref{w32}, that is, how several known and some unknown results can be derived from this formula.

\subsection{The spectral flow and the Maslov index}
Assume hypotheses of Theorem \ref{w33}. Then the spectral flow of the one-parameter operator family $\left\{\cL^t_{\mathscr D_t}\right\}_{t=\alpha}^{\beta}$ is defined as follows: There exists a partition  $\alpha=t_0<t_1<\cdots<t_N=\beta,$ and $N$ intervals $[a_{\ell},b_{\ell}],\ a_{\ell}<0<b_{\ell},\ 1\leq \ell\leq N$ such that
\begin{equation}\lb{r1}
a_{\ell}, b_{\ell}\not\in\spec\left(\cL^t_{\mathscr D_t}\right),\text{\ for all\ }t\in [t_{\ell-1},t_{\ell}], \ 1\leq \ell\leq N.
\end{equation}
The spectral flow through $\lambda=0$ is defined by the formula
\begin{equation}\lb{r2}
	\spflow\left(\{\cL^t_{\mathscr D_t}\}_{t=\alpha}^{\beta}\right):=\sum\limits_{\ell=1}^N\sum\limits_{a_{\ell}\leq\lambda<0}\left(\dim\ker\left(\cL^{t_{\ell-1}}_{\mathscr D_{t_{\ell-1}}}-\lambda\right)-\dim\ker\left(\cL^{t_{\ell}}_{\mathscr D_{t_{\ell}}}-\lambda\right)\right).
\end{equation}
It can be shown that $\spflow\left(\left\{\cL^t_{\mathscr D_t}\right\}_{t=a}^b\right)$ does not depend on the choice of partition of the interval $[\alpha,\beta]$ (cf., \cite[Appendix]{BZ1}). In fact, since $\{\cL^t_{\mathscr D_t}\}_{t=a}^b$ is uniformly bounded from below (with lower bound $\lambda_{\infty}$), we can assume that $[\lambda_{\infty},0]\subset[a_{\ell},b_{\ell}],\ 1\leq \ell\leq N$.  In this case \eqref{r2} reads
\begin{equation}\lb{r3}
\spflow\left(\left\{\cL^t_{\mathscr D_t}\right\}_{t=\alpha}^{\beta}\right)=\sum\limits_{\ell=1}^N\left(\mo(\cL^{t_{\ell-1}}_{\mathscr D_{t_{\ell-1}}})-\mo(\cL^{t_{\ell}}_{\mathscr D_{t_{\ell}}})\right).
\end{equation}
Combining \eqref{w32} and \eqref{r3}, one obtains
\begin{equation}\lb{367new}
\spflow\left(\left\{\cL^t_{\mathscr D_t}\right\}_{t=\alpha}^{\beta}\right)=\mi\left((\cK_{0,t},\cG_t)|_{t\in\cI}\right).
\end{equation} 
By rescaling, a similar formula holds for the spectral flow through any point $\lambda_0\in\bbR$ with $\cK_{0,t}$ replaced by  $\cK_{\lambda_0,t}$. Of course, relations between the spectral flow and the Maslov index of this type have been obtained in many important papers, cf., e.g., \cite{BZ3}, \cite{BZ1}, \cite{BZ2}, \cite{BZ4} \cite{CLM}, \cite{F04}, \cite{KL}, \cite{Nic}, \cite{rs93}, \cite{RoSa95}, \cite{SW}. We stress, however, that in our case $\mathscr{D}_t\subset H^1(\Omega)$, $t\in[\alpha,\beta]$, and that we use the ``usual" PDE trace operators as oppose to the abstract traces acting into the quotient spaces.
\subsection{Spectra of elliptic operators on deformed domains and the Maslov index}
In this section we revise a main result, Theorem 1, from \cite{CJM1}, and place it in a general framework of Theorem \ref{w33}. Given a second order elliptic operator $\cL$ on $\Omega$ and a one-parameter family of diffeomorphisms, \cite[Theorem 1]{CJM1} expresses the difference of  Morse indices of $\cL$ and its pullback in terms of the Maslov index. 
 
Let $\Omega_0\subset \bbR^n$ be a bounded open set with smooth boundary, let $\varphi_t:\bbR^n\rightarrow\bbR^n,\ t\in[0,1],$ be one-parameter family of diffeomorphisms, such that the mapping $t\mapsto\varphi_t$ is contained in $C^1([0,1], L^{\infty}(\Omega_0,\R^n))$, and $\varphi_0={\text{Id}}_{\Omega_0}$. Let us denote
\begin{equation*}
\Omega_t:=\{\varphi_t(x):x\in\Omega_0\},\ \Omega:=\cup_{0\leq t\leq 1}\Omega_t.
\end{equation*}
Suppose that  the coefficients of the second order differential operator satisfy
\begin{align}
& \cA:=\{a_{jk}\}_{1\leq j,k \leq n}\in C^{\infty}(\Omega, \C^{n\times n}), \cA=\overline{\cA^{\top}},\ \lb{r22}\\
& a_{jk}(x)\xi_k\overline{\xi_j}\geq c \sum\limits_{j=1}^n|\xi_j|^2,\ \text{for all}\ \xi=(\xi_j)_{j=1}^n\in\C^n, x\in\overline{\Omega}, \text{and some}\ c>0,\lb{3.71} \\
& b_{j}\in C^{\infty}(\Omega),\ 1\leq j \leq n, B:=(b_1,\cdots, b_n)^{\top},\lb{r23}\\
& q \in C^{\infty}(\Omega), q(x)\in\bbR,\ x\in\Omega,\lb{r24}
\end{align}
and fix a subspace $\cX_0$ such that
\begin{align}
&H^1_0(\Omega_0)\subset{\cX_0}\subset H^1(\Omega_0),\ {\cX_0}\text{\ is closed subset of } H^1(\Omega_0).\lb{r6}
\end{align}
Using \eqref{r22}-\eqref{r24} we construct a family $\{\cL_{\cX_0}^t\}$ of operators in $L^2(\Omega_0)$ as follows. Let us define the one-parameter family of sesquilinear forms on $H^1(\Omega_t)$,
\begin{align}
\begin{split}
\mathfrak{l}^t[u,v]:&=\langle \cA\nabla u,\nabla v\rangle_{L^2(\Omega_t)}+\langle B\nabla u,v\rangle_{L^2(\Omega_t)}\lb{r7r}\\
&+\langle u,B\nabla v\rangle_{L^2(\Omega_t)}+\langle qu,v\rangle_{L^2(\Omega_t)},\ u,v\in H^1(\Omega_t), t\in[0,1].
\end{split}
\end{align}
Changing variables in the right hand-side of \eqref{r7r}, we arrive at
\begin{align}
\begin{split}
\wti{\mathfrak{l}}^t[\wti u,\wti v]:&=\langle \cA^t\nabla \wti u,\nabla \wti v\rangle_{L^2(\Omega_0)}+\langle B^t\nabla \wti u,\wti v\rangle_{L^2(\Omega_0)}\lb{r8}\\
&+\langle \wti u,B^t\nabla \wti v\rangle_{L^2(\Omega_0)}+\langle q^t\wti u,\wti v\rangle_{L^2(\Omega_0)},\ \wti u,\wti v\in H^1(\Omega_0), t\in[0,1],
\end{split}
\end{align}
where the functions on $\Omega_0$ satisfy
\begin{align}
&\cA^t:=\det(D\varphi_t)(D\varphi_t^{\top})^{-1}[\cA\circ\varphi_t](D\varphi_t)^{-1},\ B^t:= \det(D\varphi_t)(D\varphi_t^{\top})^{-1}[B\circ\varphi_t],\no\\
& q^t:=\det(D\varphi_t)q\circ\varphi_t,\ \wti u:= u\circ\varphi_t,\ \wti v:= v\circ\varphi_t,\ t\in[0,1].\no
\end{align}
If $t\in[0,1]$ then the form
\begin{equation}
\wti{\mathfrak{l^t}}:L^2(\Omega_0)\times L^2(\Omega_0)\rightarrow \C,\ \dom(\wti{\mathfrak{l}}):=\cX_0, \lb{yy17}
\end{equation}
is closed and bounded from below.
Hence, by \cite[Theorem 2.8]{EE89} there exists a unique self-adjoint operator $\cL^t_{\cX_0}$ acting in $L^2(\Omega_0)$, such that
\begin{equation}\lb{r10}
\wti{\mathfrak{l}}^t[u,v]=\langle\cL^t_{{\cX_0}}u,v\rangle_{L^2(\Omega_0)} \text{\ for all\ }u\in\dom(\cL^t_{{\cX_0}}),\ v\in{\cX_0}.
\end{equation}
Moreover, by \cite[Lemma 4.1 and Proposition C.1 ]{CJM1} there exist positive constants $C_1,C_2$ such that
\begin{equation}\lb{yy18}
\wti{\mathfrak{l^t}}[f,f]\geq C_1\|f\|_{H^1(\Omega_0)}^2-C_2\|f\|_{L^2(\Omega_0)}^2.
\end{equation}
Since the form domain of $\wti{\mathfrak{l^t}}$ is compactly embedded into $L^2(\Omega_0)$, the spectrum of $\cL^t_{\cX_0}$ is purely discrete. Since $\wti{\mathfrak{l^t}}, t\in[0,1]$, is uniformly bounded from below in $L^2(\Omega_0)$ there exists $\lambda_{\infty}$ such that 
\begin{equation}\lb{yy19}
\ker\left(\cL^t_{\cX_0}-\lambda\right)=\{0\}\text{\ for all\ }\lambda\leq\lambda_{\infty}, t\in[0,1].
\end{equation}
We notice that $\cL^t_{{\cX_0}}$ is a self-adjoint extension of $\cL^t_{min}$ given by the closure of 
\begin{align}
L^t u:=- \text{div}\cA^t\nabla u+ B^t\nabla u-\nabla\cdot\overline{B^t}u+q^tu, \dom(L^t):=C^{\infty}_0(\Omega_0). 
\end{align} 
\begin{proposition}\lb{r11} Let $\cL^t_{{\cX_0}},$ $t\in[0,1]$ be the one-parameter family of self-adjoint operators defined by \eqref{r10}. Then
\begin{align}
\begin{split}
\overline{\tr_{\cL^t}\left(\dom(\cL^t_{{\cX_0}})\right)}=\{(f,g)\in\bndro:\ \ & \lb{r12}\\
f\in\gaD({\cX_0}), \langle g, \gaD u \rangle_{-1/2}=0 \text{\ for all\ }u\in {\cX_0}\} &,\ t\in[0,1]
\end{split}
\end{align}
where the bar in the left-hand side denotes closure in $\bndro$. Hence, the right-hand side of \eqref{r12} is a Lagrangian plane.
\end{proposition}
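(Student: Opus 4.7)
The plan is to adapt the argument of Proposition \ref{prop29} to the pulled-back family $\{\cL^t_{\cX_0}\}_{t\in[0,1]}$ on $\Omega_0$, writing $\cG_{\cX_0}$ for the right-hand side of \eqref{r12}. The pullback expression $\cL^t=-\div(\cA^t\nabla\,\cdot\,)+B^t\nabla-\nabla\cdot\overline{B^t}+q^t$ satisfies Hypothesis \ref{l1} on $\Omega_0$ with Lipschitz coefficients inherited from the smoothness of $\varphi_t$, Hypothesis \ref{bb1} holds by Remark \ref{yy1}, and $\wti{\mathfrak{l}}^t$ from \eqref{r8} is precisely the sesquilinear form associated with $\cL^t$, so the first Green identity \eqref{e14} and Theorem \ref{l4} are available. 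The first step is to show $\tr_{\cL^t}(\dom(\cL^t_{\cX_0}))\subseteq\cG_{\cX_0}$. For $u\in\dom(\cL^t_{\cX_0})\subseteq\cX_0\subseteq H^1(\Omega_0)$, the defining relation \eqref{r10} gives $\cL^t u\in L^2(\Omega_0)$, so $u\in\cD^1_{\cL^t}(\Omega_0)$ and the trace $\tr_{\cL^t}u$ is defined. Applying the first Green identity with any $v\in\cX_0$,
\[
\wti{\mathfrak{l}}^t[u,v]=\langle\cL^t u,v\rangle_{L^2(\Omega_0)}+\langle\gaN^{\cL^t}u,\gaD v\rangle_{-1/2},
\]
and comparing with \eqref{r10}, one concludes $\langle\gaN^{\cL^t}u,\gaD v\rangle_{-1/2}=0$ for every $v\in\cX_0$; together with $\gaD u\in\gaD(\cX_0)$ this gives $\tr_{\cL^t}u\in\cG_{\cX_0}$.

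The second step is to verify that $\cG_{\cX_0}$ is a closed isotropic subspace of $\bndro$. Isotropy is immediate from the symplectic form \eqref{dd9}: for $(\gaD u_i,g_i)\in\cG_{\cX_0}$ with $u_i\in\cX_0$, the annihilation condition yields $\omega((\gaD u_1,g_1),(\gaD u_2,g_2))=\overline{\langle g_2,\gaD u_1\rangle}_{-1/2}-\langle g_1,\gaD u_2\rangle_{-1/2}=0$. For closedness, the annihilation condition on $g$ is preserved by limits thanks to continuity of the duality pairing, while the condition $f\in\gaD(\cX_0)$ is closed because $\gaD:H^1(\Omega_0)\to H^{1/2}(\partial\Omega_0)$ is a bounded surjection with $\ker\gaD=H^1_0(\Omega_0)\subseteq\cX_0$; it therefore descends to a topological isomorphism $H^1(\Omega_0)/H^1_0(\Omega_0)\cong H^{1/2}(\partial\Omega_0)$, under which the closed subspace $\cX_0/H^1_0(\Omega_0)$ corresponds to $\gaD(\cX_0)$.

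The third and final step is a maximality argument. Theorem \ref{l4} part 1 applied to the self-adjoint operator $\cL^t_{\cX_0}$ with domain in $H^1(\Omega_0)$ shows that $\cF_t:=\overline{\tr_{\cL^t}(\dom(\cL^t_{\cX_0}))}$ is a Lagrangian, hence maximally isotropic, subspace of $\bndro$. By Step 1 and closedness of $\cG_{\cX_0}$, $\cF_t\subseteq\cG_{\cX_0}$; since $\cG_{\cX_0}$ is itself isotropic and contains the maximally isotropic $\cF_t$, equality $\cF_t=\cG_{\cX_0}$ is forced. The concluding assertion that $\cG_{\cX_0}$ is Lagrangian is then inherited from $\cF_t$. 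I anticipate the only mild technicality to lie in the closedness of $\gaD(\cX_0)$; everything else is a direct translation of the proof of Proposition \ref{prop29}.
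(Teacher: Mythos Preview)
Your proof is correct and follows essentially the same strategy as the paper: show the inclusion $\tr_{\cL^t}(\dom(\cL^t_{\cX_0}))\subseteq\cG_{\cX_0}$ via the first Green identity, observe that $\cG_{\cX_0}$ is isotropic, and invoke Theorem~\ref{l4} to conclude that the closure of the left-hand side is Lagrangian, forcing equality. The paper's proof is terser and does not spell out the closedness of $\cG_{\cX_0}$ (implicitly relying on \cite[Lemma~3.6]{CJM1}, already cited in Proposition~\ref{prop29}), whereas your Step~2 provides a clean self-contained argument for it via the quotient map $H^1(\Omega_0)/H^1_0(\Omega_0)\cong H^{1/2}(\partial\Omega_0)$.
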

\begin{proof}
 Let us fix $t\in[0,1]$. The right-hand side of \eqref{r12} is isotropic. By Theorem \ref{l4}, $\overline{\tr_{\cL^t}(\dom(\cL_{t,{\cX_0}}))}$ is Lagrangian, hence, it suffices to show that $\tr_{\cL^t}(\dom(\cL^t_{{\cX_0}}))$ is contained in the right-hand side of \eqref{r12}. The first Green identity yields
\begin{align}
&\wti{\mathfrak{l}}^t[u,v]=\langle\cL^t u, v\rangle_{L^2(\Omega)}+ \langle {\gaN^{\cL^t}}u,\gaD v\rangle_{-1/2},\ u\in\cD^1_{\cL^t}(\Omega), v\in H^1(\Omega).\lb{r13}
\end{align}
On the other hand,
\begin{equation}\lb{r14}
\wti{\mathfrak{l}}^t[u,v]=\langle\cL^tu,v\rangle_{L^2(\Omega_0)},  \text{\ for all\ }u\in\dom(\cL^t_{{\cX_0}}), v\in{\cX_0}.
\end{equation}
Since $\dom(\cL^t_{{\cX_0}})\subset \cD^1_{\cL^t}(\Omega)$ and ${\cX_0}\subset H^1(\Omega_0)$, one has
\begin{equation}\lb{r15}
\langle {\gaN^{\cL^t}}u,\gaD v\rangle_{-1/2}=0, \text{\ for all\ } u\in \dom(\cL^t_{{\cX_0}}), v\in {\cX_0},
\end{equation}
thus $(\gaD u,\gaN^{\cL^t}u)$ is contained in the right-hand side of \eqref{r12} whenever $u\in \dom(\cL^t_{{\cX_0}})$.
\end{proof}
The form $\wti{\mathfrak{l}}^1$ and the subspace ${\cX_0}$ can be pulled back to $\Omega_1$ (via $\varphi: \Omega_0\rightarrow\Omega_1$), giving rise to a self-adjoint operator $\cL^1_{\cX_1}$ acting in $L^2(\Omega_1)$ and defined by 
\begin{equation}\lb{r16}
\mathfrak{l}^1[u,v]=\langle\cL^1_{\cX_1}u,v\rangle_{L^2(\Omega_1)},\ u\in\dom(\cL^1_{\cX_1}), v\in\cX_1,
\end{equation}
where $\cX_1:=\{u\circ \varphi^{-1}_1: u\in \cX_0\}$. Employing min-max type argument one can show that
\begin{equation}\lb{r17}
\mo(\cL^1_{\cX_1})=\mo(\cL^1_{\cX_0}).
\end{equation}
Finally, let us introduce the path of Lagrangian planes in $H^1/2(\partial \Omega_0)\times H^{-1/2}(\partial\Omega_0)$, corresponding to the weak solutions by setting
\begin{equation}\lb{r18}
\cK_{0,t}:=\tr_{\cL^t}\{u\in H^1(\Omega_0): \wti{\mathfrak{l}}^t[u,\psi]=0,\ \text{for all\ } \psi\in H^1_0(\Omega_0)\},\ t\in[0,1],
\end{equation}
and the {\it constant} (cf., Proposition \ref{r11}) path of Lagrangian planes corresponding to the boundary conditions
\begin{equation}\lb{r19}
\cG_{t}:=\overline{\tr_{\cL^t}(\dom(\cL^t_{{\cX_0}}))},\ t\in[0,1].
\end{equation}
Then employing Theorem \ref{w33}, we arrive at the formulas originally derived in \cite[Theorem 1]{CJM1},
\begin{equation}\lb{r20}
\mo(\cL^{0}_{\cX_0})-\mo(\cL^1_{\cX_0})=\mi\big((\cK_{0,t},\cG_t)|_{t\in[0,1]}\big),
\end{equation}
and, using \eqref{r17}, at the formula
\begin{equation}\lb{r21}
\mo(\cL^{0}_{\cX_0})-\mo(\cL^1_{\cX_1})=\mi\big((\cK_{0,t},\cG_t)|_{t\in[0,1]}\big).
\end{equation}

\subsection{Spectra of elliptic operators with Robin boundary conditions and the Maslov index}
We will now derive the Smale-type formula for second order differential operators subject to Robin boundary conditions, cf. \cite{S65, U73}, and also \cite{CJLS, CJM1, PW}. Assume that $\Omega\subset\bbR^n$, $n\geq 2$ is bounded open set with smooth boundary. Let us fix coefficients $\cA, B, q$ as in  \eqref{r22}, \eqref{3.71}, \eqref{r23},\ \eqref{r24}, and define the differential expression
\begin{equation}\lb{r31}
\cL:=- \text{div}\cA\nabla + B\nabla -\nabla\cdot\overline{B} +q.
\end{equation}
If $\theta\in \bbR$ then the linear operator $\cL_{\theta}$ acting in $L^2(\Omega)$ and defined by
\begin{align}
&\cL_{\theta}u:=- \text{div}(\cA\nabla u)+ B\nabla u-\nabla\cdot(\overline{B} u)+qu,\ u\in\dom(\cL_{\theta}),\lb{r30}\\
&\dom(\cL_{\theta}):=\{u\in H^1(\Omega): \cL u\in L^2(\Omega),\ \gaN^{\cL}u+\theta\gaD u=0\}\lb{r32},
\end{align}   
is self-adjoint, moreover, its essential spectrum is empty, cf. \cite[Proposition 2.3]{R14}.
\begin{proposition}\lb{r33} Assume that $\theta_1<\theta_2$, then
\begin{equation}\lb{r34}
\mo(\cL_{\theta_1})-\mo(\cL_{\theta_2})=\sum\limits_{\theta_1\leq \theta\leq \theta_2}\dim\ker(\cL_{\theta}).
\end{equation}
\begin{proof}
We will use  \eqref{w32} and show that all crossing corresponding to the variation of parameter $\theta\in[\theta_1,\theta_2]$ are sign-definite.  Theorem \ref{w33} yields
\begin{equation}\lb{r35}
\mo(\cL_{\theta_1})-\mo(\cL_{\theta_2})=\mi\big((\cK,\cG_{\theta})|_{\theta\in[\theta_1,\theta_2]}\big),
\end{equation}
where 
\begin{align}
\cK=\tr_{\cL}&\{u\in H^1(\Omega): \langle \cA\nabla u,\nabla \psi\rangle_{L^2(\Omega)}+\langle B\nabla u,\psi\rangle_{L^2(\Omega)}\ \\
&\hspace{1cm}+\langle u,B\nabla \psi\rangle_{L^2(\Omega)}+\langle qu,\psi\rangle_{L^2(\Omega)}=0,\ \text{for all}\ \psi\in H^1_0(\Omega)\},\\
&\hspace{-1.5cm}\cG_{\theta}:=\{(f,-\theta f): f\in H^{1/2}(\partial\Omega)\}\subset\bndro,\ \theta\in[\theta_1,\theta_2]
\end{align}
(we notice that $\cK$ does not depend on parameter $\theta$). Clearly $\cG_{\theta}$ is Lagrangian for each $\theta\in \bbR$, moreover,
the path 
\begin{equation}\lb{r36}
[\theta_1,\theta_2]\ni\theta\mapsto\cG_{\theta}\in\bndro,
\end{equation}
is continuously differentiable. Let $\theta_*\in[\theta_1,\theta_2]$ be a crossing point, then, by \cite[Lemma 3.8]{CJLS}, there exists a neighbourhood $\Sigma_*\subset[\theta_1,\theta_2]$ containing $\theta_*$ and a mapping
\begin{equation}
s\mapsto R_s\ \text{in}\ C^1\left(\Sigma_*, \cB(\cG_{\theta_*},\cG_{\theta_*}^{\perp})\right),
\end{equation}
such that
\begin{equation}
\cG_{s}=\left\{(f,g)+R_s(f,g): (f,g)\in\cG_{\theta_{*}}\right\}, s\in\Sigma_*.
\end{equation}
Next, pick any $(f_{\theta_*},g_{\theta_*})\in\cG_{\theta_*}\cap\cK$, then $g_{\theta_*}=-\theta_* f_{\theta_*},\ f_{\theta_*}\in H^{1/2}(\partial\Omega)$, and there exists $u_*\in\ker(\cL_{\theta_*})$ such that
\begin{equation}\lb{r37}
	\tr_{\cL}u_*=(f_{\theta_*},g_{\theta_*}),
\end{equation}
moreover,
\begin{equation}\lb{r38}
	(f_{\theta_*},g_{\theta_*})+R_s(f_{\theta_*},g_{\theta_*})=(f_s,-sf_s), s\in\Sigma_*,
\end{equation}
where the mapping $s\mapsto f_s$ is contained in $C^1(\Sigma_*,H^{1/2}(\partial\Omega))$. The derivative of $f_s$ with respect to $s$ evaluated at $s_*$ is denoted by $f'_{s_*}$ . We proceed by evaluating the Maslov crossing form at $\tr_{\cL}u_*=(f_{\theta_*},g_{\theta_*})$
\begin{align}
&\cQ_{\theta_*, \cK}\left(\tr_{\cL}u_*,\tr_{\cL}u_*\right)=\omega\left((f_{\theta_*},g_{\theta_*}),\frac{d}{ds} R_s(f_{\theta_*},g_{\theta_*})\right)|_{s=\theta_*}\no\\
&=\omega\big((f_{\theta_*},-\theta_*f_{\theta_*}), (f'_{\theta_*}, -(f_{\theta_*}+\theta_*f'_{\theta_*}))\big)\no\\
&=-\overline{\langle f_{\theta_*}+\theta_*f'_{\theta_*}, f_{\theta_*}\rangle}_{L^2(\partial\Omega)}-\langle -\theta_*f_{\theta_*}, f'_{\theta_*}\rangle_{L^2(\partial\Omega)}=-\|f_{\theta_*}\|_{L^2(\Omega)}^2\no,
\end{align}
finally we arrive at
\begin{equation}\lb{r39}
\cQ_{\theta_*, \cK}(\tr_{\cL}u_*,\tr_{\cL}u_*)=-\|\gaD u_*\|_{L^2(\Omega)}^2<0.	
\end{equation}
Therefore, a calculation similar to \eqref{w38} shows that
\begin{align}\lb{r40}
\mi\left((\cK,\cG_{\theta})|_{\theta\in[\theta_1,\theta_2]}\right)=-\mi\big((\cG_{\theta},\cK)|_{\theta\in[\theta_1,\theta_2]}\big)&\no\\
\quad =\sum\limits_{\theta_1\leq \theta\leq \theta_2}\dim\left(\cK\cap\cG_{\theta}\right)=\sum\limits_{\theta_1\leq \theta\leq \theta_2}\dim\ker\left(\cL_{\theta}\right),&\no
\end{align}
as asserted.
\end{proof}	
\end{proposition}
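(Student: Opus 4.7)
The plan is to apply the main Maslov--Morse identity in Theorem \ref{w33} to the one-parameter family $\{\cL_\theta\}_{\theta\in[\theta_1,\theta_2]}$ of Robin realizations, and then compute the resulting Maslov index via crossing forms, showing that every crossing is negative definite. The first step is to identify the Lagrangian plane associated with $\cL_\theta$: the Robin boundary condition $\gaN^{\cL}u+\theta\gaD u=0$ is encoded by $\cG_\theta:=\{(f,-\theta f):f\in H^{1/2}(\partial\Omega)\}\subset\bndro$, which is Lagrangian (the symplectic form \eqref{dd9} vanishes by reality of $\theta$) and corresponds to $\cL_\theta$ via Theorem \ref{l4} and Proposition \ref{prop29} applied to $\cX:=\{u\in H^1(\Omega):\gaN^{\cL}u+\theta\gaD u=0\}$. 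Since the differential expression does not depend on $\theta$, the trace plane $\cK=\cK_{0,\theta}$ of weak solutions to $\cL u=0$ is a single $\theta$-independent Lagrangian plane. The family $\{\cL_\theta\}$ has empty essential spectrum and is uniformly bounded below on $[\theta_1,\theta_2]$, so a common $\lambda_\infty<0$ below which no eigenvalues appear exists. Hence Theorem \ref{w33} yields
\[
\mo(\cL_{\theta_1})-\mo(\cL_{\theta_2})=\mi\bigl((\cK,\cG_\theta)|_{\theta\in[\theta_1,\theta_2]}\bigr).
\]

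Next I would evaluate the crossing form at each crossing $\theta_*\in[\theta_1,\theta_2]$. Given $(f_{\theta_*},-\theta_* f_{\theta_*})\in \cG_{\theta_*}\cap\cK$, the bijection \eqref{w17} and the unique continuation principle produce a unique $u_*\in\ker\cL_{\theta_*}$ with $\tr_\cL u_*=(f_{\theta_*},-\theta_* f_{\theta_*})$. Parameterize the nearby planes $\cG_\theta$ as graphs over $\cG_{\theta_*}$ via a $C^1$ family $R_\theta\in\cB(\cG_{\theta_*},\cG_{\theta_*}^{\perp})$ as in \cite[Lemma 3.8]{CJLS}; the linearity of the Robin plane in $\theta$ means that the tangent vector at $\theta_*$ has the shape $(f'_{\theta_*},-f_{\theta_*}-\theta_* f'_{\theta_*})$ for some $f'_{\theta_*}\in H^{1/2}(\partial\Omega)$. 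Substituting into Definition \ref{def21} (ii) and using the structure of $\omega$ yields the clean expression
\[
\cQ_{\theta_*,\cK}(\tr_\cL u_*,\tr_\cL u_*)=-\|\gaD u_*\|_{L^2(\partial\Omega)}^2.
\]
If $\gaD u_*=0$ then the Robin condition also forces $\gaN^{\cL}u_*=0$, whence $u_*=0$ by unique continuation; so on nonzero vectors of $\cG_{\theta_*}\cap\cK$ the form is strictly negative, and each crossing is regular, in particular isolated by Theorem \ref{masform}.

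Finally I would assemble the result via Theorem \ref{masform} applied to the reversed pair: using $\mi(\cK,\cG_\theta)=-\mi(\cG_\theta,\cK)$, the negative-definite signatures collapse to a count of intersection dimensions, and \eqref{w11} converts each $\dim(\cK\cap\cG_\theta)$ into $\dim\ker\cL_\theta$, producing \eqref{r34} after the standard endpoint bookkeeping in \eqref{ab17}. The main obstacle is the crossing-form computation: one must verify that the abstract graph parameterization $R_\theta$ produces, to first order in $\theta-\theta_*$, exactly the shift $(0,-f_{\theta_*})$ modulo $\cG_{\theta_*}$, which requires unwinding the infinite-dimensional Lagrangian Grassmannian chart and comparing it with the explicit parameterization $f\mapsto(f,-\theta f)$; the rest is routine, since the $\theta$-independence of $\cK$ eliminates the usual coupling between the two Lagrangian paths.
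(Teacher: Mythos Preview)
Your proposal is correct and follows essentially the same route as the paper's proof: apply Theorem \ref{w33} to obtain \eqref{r35}, compute the crossing form of the moving Robin plane $\cG_\theta$ against the fixed $\cK$ to get $-\|\gaD u_*\|_{L^2(\partial\Omega)}^2$, and then convert the negative-definite crossings into the sum of kernel dimensions via \eqref{ab17} and \eqref{w11}. One small correction: your invocation of Proposition \ref{prop29} with $\cX=\{u\in H^1(\Omega):\gaN^{\cL}u+\theta\gaD u=0\}$ is misplaced, since that proposition requires a closed subspace $H^1_0\subset\cX\subset H^1$ and the Robin condition is not of this type; instead, that $\cG_\theta$ is Lagrangian and corresponds to $\cL_\theta$ follows directly from the explicit form of $\cG_\theta$ together with Theorem \ref{l4}, exactly as the paper indicates.
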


\section{The Maslov index for the Schr\"odinger operators on Lipschitz domains}\lb{section4}
In this section we establish relations between the Maslov and Morse indices, and, consequently, relations between the Maslov index and the spectral flow for Schr\"odinger operators with matrix valued potentials on Lipschitz domains.  The general result will be applied to two specific types of boundary conditions: First, $\vec{\theta}-$periodic on a cell $\Omega\subset\R^n$, and second to the Robin-type boundary conditions on star-shaped domains. Hypothesis \ref{d2} is imposed throughout this section. 
\subsection{A general result for the Schr\"odinger operators}
First, we verify Hypothesis \ref{bb1} in the present settings, that is, for the Schr\"odinger operator $\cL=-\Delta+V$ with bounded matrix valued potential. 
Assuming Hypothesis \ref{d2} and denoting the outward pointing normal unit vector to $\partial \Omega$ by $\vec{\nu}=(\nu_1,\, \cdots,\ \nu_n)$, we recall from \cite{GM10} two boundary spaces:
\begin{equation}\label{N12}
N^{1/2}(\dOm,\C^m):=\{g\in L^{2}(\dOm,\C^m)\,|\,g\nu_j\in H^{1/2}(\dOm,\C^m),\,1\leq j\leq n\},
\end{equation}
equipped with the natural norm
\begin{equation}\lb{N32}
\|g\|_{N^{1/2}(\dOm,\C^m)}:=\sum_{j=1}^n\|g\nu_j\|_{H^{1/2}(\dOm,\C^m)},
\end{equation}
and
\begin{equation}
N^{3/2}(\dOm):=\{g\in H^{1}(\dOm)\,|\,\nabla_{\tan}g\in (H^{1/2}(\dOm))^n\},
\end{equation}
equipped with the natural norm
\begin{equation}
\|g\|_{N^{3/2}(\dOm)}:=\|g\|_{L^{2}(\dOm)}+\|\nabla_{\tan}g\|_{H^{1/2}(\dOm)^n}.
\end{equation}
Here, the tangential gradient operator $\nabla_{\text{tan}}: H^1(\partial \Omega)\mapsto L^2(\partial \Omega)^n$ is defined as $$f\mapsto \left(\sum\limits_{k=1}^{n}\nu_k\frac{\partial f}{\partial \tau_{k,l}}\right)_{l=1}^n,$$
and $\frac{\partial}{\partial \tau_{k,l}}$  is the tangential derivative, which is a bounded operator between $H^s(\partial \Omega)$ and $H^{s-1}(\partial \Omega),\ 0\leq s\leq 1$, that extends the operator 
$$\frac{\partial}{\partial \tau_{k,l}}:\psi \mapsto \nu_k(\partial_{l}\psi)\big|_{\partial \Omega}-\nu_l(\partial_{k}\psi)\big|_{\partial \Omega},$$
originally defined for $C^1$ function $\psi$ in a neighbourhood of $\partial \Omega$.

\begin{lemma}[\cite{GM10}, Lemma 6.3]\lb{ff16}
Assume Hypothesis \ref{d2}. Then the Neumann trace operator $\gaN u=\nu\cdot\nabla u|_{\partial\Omega}$, $u\in H^2(\Omega)$ considered in the context
\begin{equation}\lb{ff12}
\gaN:H^2(\Om)\cap H^1_0(\Om)\to N^{1/2}(\dOm),
\end{equation}
is well-defined, bounded, onto, and with a bounded right-inverse. 
In addition, the null space of $\gaN$ in \eqref{ff12}
is precisely $H^2_0(\Om)$, the closure of $C^{\infty}_0(\Om)$ in $H^2(\Om)$.
\end{lemma}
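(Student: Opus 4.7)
My plan is to exploit a single geometric observation: for $u\in H^2(\Omega)\cap H^1_0(\Omega)$, the vanishing of $\gamma_D u$ forces $\nabla u$ to be purely normal on $\partial\Omega$, in the sense that
\begin{equation}
\gamma_D(\partial_j u)=\nu_j\,\gamma_N u \quad \text{in } H^{1/2}(\partial\Omega), \ 1\leq j\leq n.
\end{equation}
This identity links each summand of the $N^{1/2}$-norm of $\gamma_N u$ to a Dirichlet trace of an $H^1$-function and will drive the whole argument. To justify it I would use that $0=\partial_{\tau_{k,l}}(\gamma_D u)=\nu_k\gamma_D(\partial_l u)-\nu_l\gamma_D(\partial_k u)$ for all $k,l$ (by density and continuity of the tangential derivative from $C^1$ to $H^1\to H^{1/2}$), so $\gamma_D(\partial_j u)$ is everywhere parallel to $\nu$ on $\partial\Omega$; contracting with $\nu_j$ and summing recovers $\gamma_N u$. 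Boundedness of $\gamma_N:H^2(\Omega)\cap H^1_0(\Omega)\to N^{1/2}(\partial\Omega)$ is then immediate from boundedness of $\gamma_D:H^1(\Omega)\to H^{1/2}(\partial\Omega)$ applied to each $\partial_j u$.

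For surjectivity together with a bounded right inverse I would reduce to a local model via a finite cover of $\partial\Omega$ by Lipschitz graph patches, a subordinate partition of unity, and local bi-Lipschitz straightening maps. In the half-space $\R^n_+$ with flat boundary $\{x_n=0\}$ the $N^{1/2}$ condition collapses to mere $H^{1/2}$ regularity, and given a datum $h\in H^{1/2}(\R^{n-1})$ I would set $w(x',x_n):=-x_n\chi(x_n)(Eh)(x',x_n)$, where $E:H^{1/2}(\R^{n-1})\to H^1(\R^n_+)$ is any bounded extension and $\chi\in C_0^{\infty}([0,\infty))$ equals one near the origin; a direct check yields $w\in H^2(\R^n_+)\cap H^1_0$ with $\partial_n w|_{x_n=0}=-h$ and $h\mapsto w$ linear and bounded. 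The local solutions would then be glued by the partition of unity. The main obstacle, and where the delicate content of \cite{GM10} enters, is the transfer between the global $N^{1/2}(\partial\Omega)$-norm and the local $H^{1/2}$-norms on the flattened patches: since $\nu$ has only $L^{\infty}$ regularity for a Lipschitz boundary, pointwise multiplication by $\nu_j$ is not bounded on $H^{1/2}$ in general, and it is precisely to make this reduction reversible and bounded that the space $N^{1/2}$ is tailored.

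For the identification of the null space, the inclusion $H^2_0(\Omega)\subset\ker\gamma_N$ is immediate: approximating $u\in H^2_0(\Omega)$ by elements of $C_0^{\infty}(\Omega)$ in the $H^2$-norm gives $\gamma_D(\partial_j u)=0$ for all $j$, hence $\gamma_N u=0$. For the converse, if $u\in H^2(\Omega)\cap H^1_0(\Omega)$ satisfies $\gamma_N u=0$, then the key identity above yields $\gamma_D(\partial_j u)=0$ for every $j$, so all first-order derivatives of $u$ lie in $H^1_0(\Omega)$; together with $\gamma_D u=0$ this guarantees that the extension of $u$ by zero outside $\Omega$ belongs to $H^2(\R^n)$, and the standard characterization of $H^2_0(\Omega)$ on Lipschitz domains via vanishing of $u$ and $\nabla u$ on the boundary (cf.\ \cite{Mc}) then yields $u\in H^2_0(\Omega)$, completing the plan.
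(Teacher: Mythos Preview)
The paper does not prove this lemma; it is quoted verbatim from \cite[Lemma 6.3]{GM10} and used as a black box in the proof of Proposition~\ref{ff8}. There is therefore no in-paper argument to compare against, and your sketch should be read as an independent attempt at the Gesztesy--Mitrea result.

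Your central identity $\gamma_D(\partial_j u)=\nu_j\,\gamma_N u$ for $u\in H^2(\Omega)\cap H^1_0(\Omega)$ is correct and is indeed the mechanism behind well-definedness, boundedness, and the kernel computation; those parts of your plan are sound. There is, however, a concrete gap in your half-space right inverse. With $E:H^{1/2}(\bbR^{n-1})\to H^1(\bbR^n_+)$ only a bounded $H^1$-extension, the function $w(x',x_n)=-x_n\chi(x_n)(Eh)(x',x_n)$ is \emph{not} in $H^2(\bbR^n_+)$ in general: the mixed second derivatives $\partial_k\partial_l w$ for $k,l<n$ contain the term $-x_n\chi(x_n)\,\partial_k\partial_l(Eh)$, and $\partial_k\partial_l(Eh)$ need not lie in $L^2$. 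You need an extension that gains half a derivative against the weight $x_n$, e.g.\ the Poisson-type lift $\widehat{w}(\xi',x_n)=x_n e^{-\langle\xi'\rangle x_n}\widehat{h}(\xi')$, for which a direct computation shows $\|w\|_{H^2(\bbR^n_+)}\lesssim\|h\|_{H^{1/2}(\bbR^{n-1})}$ together with $w|_{x_n=0}=0$ and $\partial_n w|_{x_n=0}=h$.

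Beyond that, you have correctly located but not resolved the genuinely hard step: transporting the half-space construction back to $\Omega$ through a bi-Lipschitz chart. A bi-Lipschitz change of variables preserves $H^1$ but does \emph{not} preserve $H^2$, so pulling back your local $H^2$ solution does not obviously yield an $H^2(\Omega)$ function; and, as you note, multiplication by the merely $L^\infty$ components $\nu_j$ is not bounded on $H^{1/2}(\partial\Omega)$, so the equivalence of the global $N^{1/2}$-norm with the patchwise $H^{1/2}$-norms is not automatic. In \cite{GM10} these issues are handled by working with the specific structure of Lipschitz graph domains and the tailored definition of $N^{1/2}$; your sketch would need to import that analysis (or an equivalent) to close the surjectivity claim.
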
  
We will now show that both density assumptions in Hypothesis \ref{bb1} are satisfied for the Schr\"odinger operators on Lipschitz domains. Since $V$ is bounded it suffices to verify the assumptions for the Laplace operator. Let the function space \begin{equation}\lb{ff10}
\cD^s_{\Delta}(\Omega):=\{u\in H^s(\Omega,\mathbb C^m): \Delta u\in L^2(\Omega,\mathbb C^m)\},\ s\geq 0,
\end{equation} 
be equipped with the natural norm
\begin{equation}\lb{ff11}
\|u\|_{\Delta,s}:=\left(\|u\|_{H^s({\Omega},\C^m)}^2+\|\Delta u\|_{L^2({\Omega,\C^m})}^2\right)^{1/2},\ s\geq 0.
\end{equation}
Let us denote 
\begin{equation}
\tr_{\Delta}:=\left(\gaD u, \gaN u\right), \tr_{\Delta}\in \cB\left(\cD_{\Delta}^1(\Omega),\bndr\right).
\end{equation}
\begin{proposition}\lb{ff8}
Assume Hypothesis \ref{d2}. Then
\begin{align}
&i)\ \ran(\tr_{\Delta,1})\text{\ is dense in\ }\bndr, \lb{ff9}\\
&ii)\ \cD^1_{\Delta}(\Omega)\text{\ is dense in\ }\cD^0_{\Delta}(\Omega).\lb{ff9f}
\end{align}
\end{proposition}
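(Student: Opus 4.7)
I will treat the two density assertions separately, leaning on the refined boundary Sobolev scales $N^{1/2}(\partial\Omega)$ and $N^{3/2}(\partial\Omega)$ from \cite{GM10} and on Lemma \ref{ff16}. Since $V\in L^\infty(\Omega)$ acts boundedly on $L^2(\Omega;\mathbb{C}^m)$, the graph norms of $\cL=-\Delta+V$ and of $-\Delta$ on $\cD^s_\cL(\Omega)=\cD^s_\Delta(\Omega)$ are equivalent, and in both statements it suffices to take $\cL=-\Delta$.

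\textbf{Part $(i)$.} Because $H^2(\Omega;\mathbb{C}^m)\subset \cD^1_\Delta(\Omega)$, it suffices to show that $\tr_\Delta\bigl(H^2(\Omega;\mathbb{C}^m)\bigr)$ is dense in $\bndr$. The plan is to combine two surjectivity statements onto the refined spaces. Lemma \ref{ff16} provides a bounded right inverse $E_N$ of $\gaN:H^2(\Omega;\mathbb{C}^m)\cap H^1_0(\Omega;\mathbb{C}^m)\to N^{1/2}(\partial\Omega;\mathbb{C}^m)$, and the companion Dirichlet-trace surjectivity theorem of \cite{GM10} supplies a bounded right inverse $E_D$ of $\gaD:H^2(\Omega;\mathbb{C}^m)\to N^{3/2}(\partial\Omega;\mathbb{C}^m)$. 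For any $(f_0,g_0)\in N^{3/2}(\partial\Omega;\mathbb{C}^m)\times N^{1/2}(\partial\Omega;\mathbb{C}^m)$ the element
\[
u:=E_D(f_0)+E_N\bigl(g_0-\gaN E_D(f_0)\bigr)\in H^2(\Omega;\mathbb{C}^m)
\]
satisfies $\tr_\Delta u=(f_0,g_0)$, where one uses that $\gaN E_D(f_0)\in N^{1/2}$ because $E_D(f_0)\in H^2$. The inclusion $N^{3/2}\times N^{1/2}\subset \tr_\Delta(H^2(\Omega;\mathbb{C}^m))$ thus holds, and density in $\bndr$ then follows from the well-known (cf.\ \cite{GM10}) dense embeddings $N^{3/2}(\partial\Omega;\mathbb{C}^m)\hookrightarrow H^{1/2}(\partial\Omega;\mathbb{C}^m)$ and $N^{1/2}(\partial\Omega;\mathbb{C}^m)\hookrightarrow H^{-1/2}(\partial\Omega;\mathbb{C}^m)$, each of which can be verified through a family of locally supported smooth cut-offs away from the singular set of $\partial\Omega$.

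\textbf{Part $(ii)$.} Given arbitrary $u\in\cD^0_\Delta(\Omega)$, I would decompose $u=v+w$, where $v\in H^1_0(\Omega;\mathbb{C}^m)$ is the Lax--Milgram solution of $-\Delta v=-\Delta u$ (well defined because $\Delta u\in L^2\subset H^{-1}$). Then $\Delta v=\Delta u$ distributionally, so $v\in H^1_0(\Omega;\mathbb{C}^m)\cap \cD^1_\Delta(\Omega)$ automatically and contributes nothing to the error. The remainder $w:=u-v\in L^2(\Omega;\mathbb{C}^m)$ satisfies $\Delta w=0$ in $\cD'(\Omega;\mathbb{C}^m)$, so $\|w\|_{\Delta,0}=\|w\|_{L^2}$. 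The task reduces to approximating such an $L^2$-harmonic $w$ in the $L^2$-norm by $H^1$-harmonic functions (which automatically lie in $\cD^1_\Delta(\Omega)$ with vanishing Laplacian). This in turn would be obtained via the non-tangential maximal function / layer-potential machinery of \cite{GM08,GM10}: the Dirichlet trace extends to a bounded isomorphism between $\{h\in L^2(\Omega;\mathbb{C}^m):\Delta h=0\}$ and a suitable negative-order distribution space on $\partial\Omega$, with $H^1$-harmonic functions corresponding exactly to the subspace of traces in $H^{1/2}(\partial\Omega;\mathbb{C}^m)$; density transfers through the isomorphism from the density of $H^{1/2}$ in that negative-order space.

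\textbf{Main obstacle.} The principal technical difficulty is the last $L^2$-density of $H^1$-harmonic functions inside the space of distributional $L^2$-harmonics on a merely Lipschitz domain. On smooth domains this is immediate from elliptic regularity and the Poisson integral; on Lipschitz domains it forces genuine use of the Gesztesy--Mitrea boundary theory, because the Dirichlet trace of an $L^2$-harmonic function is well defined only as a distribution in a negative-order space whose precise description relies on the non-tangential maximal function. All other steps are essentially bookkeeping based on Lemma \ref{ff16}, the Dirichlet-trace surjectivity of \cite{GM10}, and the density of the $N$-spaces in the $H^{\pm 1/2}$-scales.
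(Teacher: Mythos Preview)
Your approach differs from the paper's in both parts, and Part~(i) as written has a gap.

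\textbf{Part (i).} The paper does not construct a dense subset of $\ran(\tr_\Delta)$ directly. Instead it shows that the \emph{symplectic annihilator} of $\ran(\tr_\Delta)$ with respect to $\omega$ in \eqref{dd9} is $\{(0,0)\}$: given $(\varphi,\psi)$ in the annihilator, one first tests against $F_g\in H^2\cap H^1_0$ with $\gaN F_g=g\in N^{1/2}(\partial\Omega)$ (Lemma~\ref{ff16}) to obtain $\langle g,\varphi\rangle_{-1/2}=0$ for all $g\in N^{1/2}$, whence $\varphi=0$ by the dense embedding $N^{1/2}\hookrightarrow H^{-1/2}$; then one tests against preimages under the surjection $\gaD:\cD^{3/2}_\Delta(\Omega)\to H^1(\partial\Omega)$ to force $\psi=0$ via $H^1\hookrightarrow H^{1/2}$. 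No $N^{3/2}$-statement is used.

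Your direct route stumbles at the step ``$\gaN E_D(f_0)\in N^{1/2}$ because $E_D(f_0)\in H^2$''. Lemma~\ref{ff16} gives $\gaN(H^2\cap H^1_0)\subset N^{1/2}$, and that inclusion genuinely uses $\gaD u=0$: it forces $\nabla u|_{\partial\Omega}=(\gaN u)\,\nu$, so $(\gaN u)\nu_j=(\partial_j u)|_{\partial\Omega}\in H^{1/2}$. For a general $u\in H^2$ on a Lipschitz boundary one only gets $\gaN u\in L^2(\partial\Omega)$, because multiplication by the merely $L^\infty$ normal does not preserve $H^{1/2}$. (For the same reason the target of $\gaD$ on all of $H^2$ is $H^1(\partial\Omega)$, not $N^{3/2}$.) The clean repair is to take $E_D$ as the right inverse of $\gaD:\{u\in H^2:\gaN u=0\}\to N^{3/2}$, the genuine companion of Lemma~\ref{ff16} in \cite{GM10}; then $\gaN E_D(f_0)=0$, your $u$ becomes $E_D(f_0)+E_N(g_0)$, and the rest of your argument goes through.

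\textbf{Part (ii).} The paper dispatches this in one line by citing the density $C^\infty(\overline{\Omega})\hookrightarrow \cD^0_\Delta(\Omega)$ from \cite{BM}. Your Lax--Milgram splitting $u=v+w$ with $v\in H^1_0$ and $w$ an $L^2$-harmonic is a correct and more self-contained alternative; the residual density of $H^1$-harmonics in $L^2$-harmonics does follow from the boundary isomorphisms of \cite{GM10}, though spelling that out is at least as much work as what the paper outsources.
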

\begin{proof}
First we prove part {\it i}). It suffices to show that 
\begin{equation}\lb{gg5}
\left(\left\{\left(\gaD u, \gaN u\right): u\in \cD^{1}_{\Delta}(\Omega)\right\}\right)^{\circ}=\{(0,0)\},
\end{equation} 
where the left-hand side denotes the annihilator with respect to the sympletic form \eqref{dd9}.
Pick an arbitrary
\begin{equation}\lb{gg7}
	(\varphi,\psi)\in\left(\left\{\left(\gaD u, \gaN u\right): u\in \cD^{1}_{\Delta}(\Omega)\right\}\right)^{\circ},
\end{equation}
then
\begin{align}\lb{gg8}
\overline{\langle \psi  ,\gaD f \rangle}_{{-1/2}}-\langle \gaN f ,\varphi \rangle_{{-1/2}}=0,\ \text{for all}\ f\in\cD_{\Delta}^1(\Omega).
\end{align}
By Lemma \ref{ff16}, for arbitrary $g\in N^{1/2}(\partial\Omega,\C^m)$ there exists $F_g\in H^2(\Omega,\C^m)$ such that
\begin{align}
&\gaD F_g= 0,\ \gaN F_g=g.
\end{align} 
Using equation \eqref{gg8} with $f=F_g$, one obtains
\begin{equation}\lb{gg9}
\langle g,\varphi \rangle_{{-1/2}}=0,\ \text{for all}\ g\in N^{1/2}(\partial\Omega,\C^m).
\end{equation}  
 In addition, by \cite[Corollary 6.12]{GM10}, we have
\begin{equation}\lb{gg3}
 N^{1/2}(\partial\Omega,\C^m)\hookrightarrow L^{2}(\partial\Omega,\C^m)\hookrightarrow H^{-1/2}(\partial\Omega,\C^m),
\end{equation}
where both inclusions are dense and continuous. Therefore, \eqref{gg9} can be extended by continuity to $H^{-1/2}(\partial\Omega,\C^m)$, and one has 
\begin{equation}\lb{gg10}
\langle g,\varphi \rangle_{{-1/2}}=0,\ \text{for all}\ g\in H^{-1/2}(\partial\Omega,\C^m),
\end{equation}
hence, $\varphi=0$. Combining \eqref{gg8} and \eqref{gg10}, one obtains
\begin{equation}\lb{gg11}
\langle \psi  ,\gaD f \rangle_{{-1/2}}=0,\ \text{for all}\ f\in\cD_{\Delta}^1(\Omega).
\end{equation}
Recall from \cite[Lemma 2.3]{GM10} that $\gaD$ considered in the context 
\begin{equation}\lb{gg12}
\gaD:\cD_{\Delta}^{3/2}(\Omega)\to H^1(\partial\Omega,\C^m),
\end{equation}
is compatible with \eqref{e9}, bounded, has bounded right-right inverse (hence, onto). Then, for arbitrary $h\in H^1(\partial\Omega,\C^m)$ there exits $G_h\in\cD_{\Delta}^{3/2}(\Omega)\subset\cD_{\Delta}^{1}(\Omega)$, such that $\gaD G_h=h$. Let us set $f=G_h$ in \eqref{gg11} and obtain
\begin{equation}\lb{gg13}
\langle \psi  ,h \rangle_{{-1/2}}=0,\ \text{for all}\ h\in H^1(\partial\Omega,\C^m).
\end{equation}
Since the inclusion 
\begin{equation}\lb{gg14}
H^{1}(\partial\Omega,\C^m)\hookrightarrow H^{1/2}(\partial\Omega,\C^m)
\end{equation}
is dense, \eqref{gg13} yields $\psi=0$. Thus, $(\varphi,\psi)=(0,0)$ and consequently part {\it i}) holds.

The second assertion follows from the fact that
\begin{equation}\lb{gg15}
C^{\infty}(\overline{\Omega})\hookrightarrow\cD^0_{\Delta}(\Omega),
\end{equation}
densely, cf. \cite{BM}. 
\end{proof}

Next, we turn to a Lagrangian formulation of eigenvalue problems for self-adjoint extensions of $-\Delta_{min}$, 
\begin{equation}\lb{ab20}
	-\Delta_{min} u:=-\Delta u, u\in\dom(-\Delta_{min}):= H^2_0(\Omega).
\end{equation}
Recall, that $(-\Delta_{min})^{*}=-\Delta_{max}$, where 
\begin{equation}\lb{ac10}
	-\Delta_{max}u:=-\Delta u,\ u\in \dom(-\Delta_{max}):=\cD_{\Delta}^1(\Omega).
\end{equation}
The self-adjoint extension of $-\Delta_{min}$ with domain $\mathscr D\subset \cD_{\Delta}^1(\Omega)$ is denoted by $-\Delta_{\mathscr D}$.
\begin{hypothesis}\lb{r51} Let $\Omega\subset\bbR^n, n\geq 2$ be open, bounded, Lipschitz domain and assume that the mapping
\begin{equation}\no
\cI\ni t\mapsto V_t\in L^{\infty}(\Omega,\C^{m\times m}),\ 	V_t=\overline{V_t}^{\top}, t\in\cI,
\end{equation}
is contained in $ C^1(\cI, L^{\infty}(\Omega, \C^{m\times m})),\ \cI:=[\alpha,\beta]$. 

Moreover, let us assume that $f$ is a given function such that
\begin{equation}\lb{y24y}
f:\cI\rightarrow\bbR,\ f\in C^1(\cI),\ f(t)>0,\ \partial_tf(t)\not=0,\ t\in \cI.
\end{equation}
\end{hypothesis}
Let us denote by $K_{\lambda, t,f}$ the trace of the set of weak solutions to the eigenvalue problem $-\Delta u+Vu=\lambda u$, that is,
\begin{align}
\begin{split}
&{K}_{\lambda,t,f}:= \tr_{\Delta}\big\{u\in \cD_{\Delta}^1(\Omega): f(t)\langle \nabla u,\nabla \varphi\rangle_{L^2(\Omega,\C^m)}+\langle V_tu,\varphi\rangle_{L^2(\Omega,\C^m)}\lb{r53}\\
&=\lambda\langle u,\varphi\rangle_{L^2(\Omega,\C^m)},\text{\ for all}\ \varphi\in H^1_0(\Omega,\C^m)\big\}, \lambda\in \bbR,t\in\cI,
\end{split}
\end{align} 
where $\nabla u:=[\nabla u_1,\cdots,\nabla u_{m}]^{\top}\in \C^{m\times n},$
\begin{align}
&\langle \nabla u,\nabla v\rangle_{L^2(\Omega,\C^{m})}:=\sum\limits_{i=1}^{m}\langle\nabla u_i,\nabla v_i\rangle_{{[L^2(\Omega,\C)]^n}},\no
\end{align}
for given $u=(u_i)_{i=1}^{m},v=(v_i)_{i=1}^{m}\in H^1(\Omega, \C^m)$.
\begin{theorem}\lb{r57}
Assume Hypotheses \ref{d2} and \ref{r51}.  Let $\mathscr D_t\subset \cD_{\Delta}^1(\Omega)$, $t\in \cI$, and assume that the linear operator $\cL^t_{\mathscr D_t}=-f(t)\Delta_{\mathscr D_t}+V_t$ acting in $L^2(\Omega,\C^m)$ and given by
\begin{equation}\lb{r54}
 \cL^t_{\mathscr D_t} u:=-f(t)\Delta u +V_tu,\ u\in \dom(\cL^t_{\mathscr D_t}):={\mathscr D_t},
\end{equation}
is self-adjoint with $\spec_{ess}\left(\cL^t_{\mathscr D_t}\right)=\emptyset,\ t\in\cI$.  Assume that there exists $\lambda_{\infty}<0$, such that 
\begin{equation*}
\ker(\cL^t_{\mathscr D_t}-\lambda)=\{0\}\text{\ for all\ }\lambda\leq \lambda_{\infty}, t\in\cI.
\end{equation*}
Suppose that the path
\begin{equation}\lb{r55}
t\mapsto\cG_t:=\overline{\tr_{\Delta,1}(\mathscr D_t)}\in\bndr,
\end{equation}
is contained in $ C^1\left(\cI,\Lambda\left(\bndr\right)\right)$.	

Then, one has
\begin{equation}\lb{r56}
\mo\left(\cL^{\alpha}_{\mathscr D_{\alpha}}\right)-\mo\big(\cL^{\beta}_{\mathscr D_{\alpha}}\big)=\mi\left((K_{0,t, f},\cG_t)|_{t\in\cI}\right).
\end{equation}
\end{theorem}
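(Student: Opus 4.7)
The strategy is to mirror the four-step argument used to establish Theorem \ref{w33}, now on the Lipschitz domain $\Omega$ and with the trace map $\tr_{\Delta}$ (since for $\cL^t=-f(t)\Delta+V_t$ the coefficient matrix in front of $\nabla$ is $f(t)I$ and the conormal derivative reduces, up to the factor $f(t)$, to the ordinary Neumann trace $\gaN$). By Proposition \ref{ff8}, Hypothesis \ref{bb1} holds for the Laplacian on a Lipschitz domain, so Theorem \ref{l4} applies and each self-adjoint extension $\cL^t_{\mathscr D_t}$ indeed corresponds to the Lagrangian plane $\cG_t=\overline{\tr_{\Delta}(\mathscr D_t)}^{\bndr}$. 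I will parametrize the boundary of the rectangle $[\lambda_\infty,0]\times[\alpha,\beta]$ as in \eqref{w17w}--\eqref{21}, producing four sub-paths $\Sigma_1,\ldots,\Sigma_4$, and compute the Maslov index of $s\mapsto (\cK_{\lambda(s),t(s),f},\cG_{t(s)})$ on each piece. Homotopy invariance around the contractible loop yields the desired cancellation.

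Before the crossing-form computation I need a Lipschitz-domain analog of Theorem \ref{w7}, asserting that $(\cK_{\lambda(s),t(s),f},\cG_{t(s)})$ is a Fredholm pair of Lagrangian planes and that
\begin{equation}
\dim\bigl(\cK_{\lambda(s),t(s),f}\cap\cG_{t(s)}\bigr)=\dim\ker\bigl(\cL^{t(s)}_{\mathscr D_{t(s)}}-\lambda(s)\bigr), \no
\end{equation}
together with continuity (and $C^1$ behavior on each $\Sigma_j$) of the path $s\mapsto \cK_{\lambda(s),t(s),f}$. The argument of Theorem \ref{w7} transfers verbatim: it relies only on the unique continuation principle, the two Green identities, the Fredholm property of $\cL^{t(s)}_{\mathscr D_{t(s)}}-\lambda(s)$ (guaranteed by $\spec_{ess}=\emptyset$), and the inclusion $\mathscr D_t\subset\cD^1_{\Delta}(\Omega)$, all of which remain valid in the present Lipschitz setting. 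For continuity and the requisite a priori bound on weak solutions I will invoke a Lipschitz-domain version of Lemma \ref{q3}; the proof of that lemma used only the Lipschitz-domain trace theory together with the uniform ellipticity constant (here $f(t)>0$ bounded away from zero on the compact interval $\cI$), so it carries over without change.

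Next I compute the crossing forms on $\Sigma_1$ (and symmetrically on $\Sigma_3$), where $t$ is frozen at $\alpha$ and $\lambda(s)=s$ varies in $[\lambda_\infty,0]$. At a crossing $s_*$, pick $u_0\in\ker(\cL^\alpha_{\mathscr D_\alpha}-s_*)$ with trace $(\phi_0,\psi_0)$, and let $u_s$ be the unique solution of $\cL^\alpha u_s=(s+s_*)u_s$ whose trace equals $(\phi_0,\psi_0)+R_{s+s_*}(\phi_0,\psi_0)\in\cK_{s+s_*,\alpha,f}$. Because $\cL^\alpha=-f(\alpha)\Delta+V_\alpha$, the second Green identity with respect to $\tr_{\Delta}$ reads $\langle\cL^\alpha u_0,u_s\rangle_{L^2}-\langle u_0,\cL^\alpha u_s\rangle_{L^2}=f(\alpha)\,\omega(\tr_{\Delta}u_0,\tr_{\Delta}u_s)$. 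Evaluating the left side gives $-s\langle u_0,u_s\rangle_{L^2}$, so
\begin{equation}
\cQ_{s_*,\cG_\alpha}\bigl(\tr_{\Delta}u_0,\tr_{\Delta}u_0\bigr)=\lim_{s\to 0}\frac{\omega(\tr_{\Delta}u_0,\tr_{\Delta}u_s)}{s}=-\frac{\|u_0\|_{L^2(\Omega,\mathbb C^m)}^2}{f(\alpha)}, \no
\end{equation}
which is strictly negative since $f(\alpha)>0$. All crossings on $\Sigma_1$ are therefore regular and negative, and Theorem \ref{masform} yields $\mi(\cK_{\lambda(s),\alpha,f}|_{s\in\Sigma_1},\cG_\alpha)=-\mo(\cL^\alpha_{\mathscr D_\alpha})$; the analogous positive-definite computation on $\Sigma_3$ gives $+\mo(\cL^\beta_{\mathscr D_\beta})$. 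On $\Sigma_4$ the hypothesis \eqref{smb} forbids any crossing, so the contribution is zero. Catenating the four contributions and using the homotopy invariance (the loop bounds a disk in the Lagrangian Grassmannian on which the pair remains Fredholm) gives \eqref{r56}.

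The principal technical obstacle is to verify that $s\mapsto\cK_{\lambda(s),t(s),f}$ is continuous in $F\Lambda(\cG_{t(s)})$ and $C^1$ on each open side $\Sigma_j$, including at the corners where the continuous two-parameter dependence on $(t,\lambda)$ must be exploited: on $\Sigma_2$ both $V_t$ and $f(t)$ vary simultaneously with $\cG_t$, and the $C^1$-dependence must be established jointly. Once this regularity and the uniform elliptic estimate of Lemma \ref{q3}-type are in hand, the rest of the argument is a routine adaptation of Theorem \ref{w33}.
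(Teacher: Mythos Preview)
Your proposal is correct and follows precisely the route the paper intends: the paper itself omits the proof of Theorem \ref{r57}, stating only that it ``is similar to that of Theorem \ref{w33}.'' Your four-step homotopy argument on the rectangle $[\lambda_\infty,0]\times[\alpha,\beta]$, invoking Proposition \ref{ff8} to verify Hypothesis \ref{bb1} on Lipschitz domains and a Lipschitz analog of Lemma \ref{q3}, is exactly that adaptation. Your observation that the crossing-form computation picks up a factor $1/f(\alpha)$ (because the trace map is $\tr_\Delta$ rather than $\tr_{\cL^t}$, so the second Green identity for $\cL^t=-f(t)\Delta+V_t$ reads $\langle\cL^t u,v\rangle-\langle u,\cL^t v\rangle=f(t)\,\omega(\tr_\Delta u,\tr_\Delta v)$) is a genuine point that the paper does not spell out; since $f>0$ on $\cI$ the sign conclusion is unaffected.
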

The proof of Theorem \ref{r57} is similar to that of Theorem \ref{w33}, and is omitted.
We complete this section by illustrating applications of \eqref{r56}.
We note that the Maslov index of the path $\big((K_{0,t,f},\cG_t)|_{t\in\cI}\big)$ is equal to the spectral flow of $\{\cL^t_{\mathscr D_t}\}_{t=\alpha}^{\beta}$, that is,
\begin{equation}\lb{y1}
\spflow\left(\{\cL^t_{\mathscr D_t}\}_{t=\alpha}^{\beta}\right)=\mi\big((K_{0,t, f},\cG_t)|_{t\in\cI}\big).
\end{equation}

\subsection{Spectra of $\vec{\theta}-$periodic Schr\"odinger operators and the Maslov index}
In this section we derive a relation between the Maslov and Morse indices for multidimensional $\vec{\theta}-$periodic Schr\"odinger operators  as an application of \eqref{r56}. 

Firstly, we define the self-adjoint extension of $-\Delta_{min}$ corresponding to $\vec{\theta}-$periodic boundary conditions 
\begin{equation}\no
u(x+a_j)=\e^{2\pi\bfi\theta_j}u(x),\  \frac{\partial u}{\partial \vec{\nu}}(x+a_j)=\e^{2\pi\bfi\theta_j}\frac{\partial u}{\partial \vec{\nu}}(x),
\end{equation} 
where $\{a_1, \dots a_n\}\subset\R^n$ are linearly independent vectors, $\vec{\theta}:=(\theta_1,\dots, \theta_n)\in[0,1)^n$.  Let 
$Q$ denote the unit cell
\begin{align*}
Q&:=\{t_1{a}_1+\cdots+t_n{a}_n|\ 0\leq t_j\leq 1 , j\in\{1,\dots, n\}\}.
\end{align*}
The faces $\partial Q^s_j$ of the unit cell $Q$
(so that $\partial Q=\cup_{s=0}^1\cup_{j=1}^n\partial Q^s_j$) are denoted by \[{\partial Q}^{s}_{j}: =\{t_1{a}_1+\cdots+t_n{a}_n\in Q\big|\, t_{j}=s\}, \  j\in\{1,\dots, n\},\ s\in\{0,1\}.\] The $n$-tuple $\{{a}_1, \dots {a}_n\}\subset\R^n$ is uniquely associated with an $n\times n$ matrix $A$ by the condition $A{a}_j=2\pi {e}_j$, where $\{{e}_j\}_{1\leq j\leq n}$ is the standard basis in $\C^n$. For the matrix $A$ just defined, and $k\in\bbZ^n$ we denote 
\begin{align}
\zeta_k(x)&:=|Q|^{-1}\e^{{\bfi A^{\top}(\vec{\theta}-k)\cdot x}},\,   x\in Q.\lb{y17}
\end{align} 
Recalling that $\partial Q=\cup_{s=0}^{1}\cup_{j=1}^{n}{\partial Q}_j^s$, we define the Dirichlet trace operators corresponding to each face of $Q$ as follows,
\begin{align}
&\gamma_{D,{\partial Q}_{j}^{s}}:H^2(Q, \C^m)\rightarrow L^{2}({\partial Q}_{j}^{s},\C^{m}),\no\\
&\gamma_{D,{\partial Q}_{j}^{s}}(u):=(\gamma_{D}u)|_{{\partial Q}_{j}^{s}},\ \ 1\leq j\leq n,\ s\in\{0,1\}.\no
\end{align}
It follows that $\gamma_{D,{\partial Q}_j^{s}}\in\cB\big(H^2(Q, \C^m),L^2({\partial Q}_j^{s};{\C}^{m})\big)$ for $1\leq j\leq n$
and $s\in\{0,1\}$. The Neumann trace is given by
\begin{align}
&\gamma_{N,{\partial Q}_{j}^{s}}:H^2(Q, \C^m)\rightarrow L^2({\partial Q}_{j}^{s};{\C}^{m}),\no\\
&\gamma_{N,{\partial Q}_{j}^{s}}(u):=\big({{\gamma}}_{D}(\nabla u) \overrightarrow{\nu}\big)\big|_{{\partial Q}_{j}^{s}},\ \ 1\leq j\leq n,\ s\in\{0,1\},\no
\end{align}
where $\vec{\nu} \text{\ is the outward pointing normal unit vector to\ }\partial Q$. 
The inclusion   
\begin{equation*}
\gamma_{N,{\partial Q}_j^{s}}\in \cB\left(H^2(Q, \C^m),L^2\left({\partial Q}_j^{s};\C{^{m\times n}}\right)\right),
\end{equation*}
holds for all $1\leq j\leq n,\ s\in\{0,1\}$. For each $u\in \Htworm$ we denote
\begin{equation}\label{y4}
u_{j}^{s}:=\gamma_{D,{\partial Q}_{j}^{s}}(u),\quad \partial_{\nu} u^{s}_{j}:=\gamma_{N,{\partial Q}^{s}_{j}}(u),  \ 1\leq j\leq n,\ s\in\{0,1\}.
\end{equation} 
Let us also introduce the weighted translation operators
\begin{align}
\begin{split}
&\bbM_j\in\cB \left(L^2({\partial Q}_j^0;\C^m), L^2({\partial Q}_j^1;\C^m)\right),\label{y6}\\
&(\bbM_j u)(x)=\e^{2\pi\bfi\theta_j}u(x-a_j) \text{\ for a.a.\ }x\in{\partial Q}_j^1,\ 1\leq j\leq n.
\end{split}
\end{align}

\begin{proposition}\lb{y5} Recall notation \eqref{y4}, \eqref{y6}. Then the linear operator
\begin{align}
&\hspace{-0.25cm}-\Delta_{\vec{\theta}}:\dom (-\Delta_{\vec{\theta}})\subset L^2(Q,\C^m)\rightarrow L^2(Q,\C^m),\lb{y15}\\
&\hspace{-0.25cm}\dom (-\Delta_{\vec{\theta}}):=\big\{u \in H^2(Q,\C^m):\,u^1_j=\bbM_ju^0_{j},\partial_{\nu} u^1_j=-\bbM_j\partial_{\nu} u^0_j, 1\leq j\leq n\big\},\lb{y14}\\
&\hspace{-0.25cm}-\Delta_{\vec{\theta}} u:=-\Delta u, \ u\in  \dom (-\Delta_{\vec{\theta}})\lb{y16}
\end{align}
is self-adjoint, moreover
\begin{equation}
-\Delta_{\min}\subset-\Delta_{\vec{\theta}}\subset-\Delta_{\max}.\no
\end{equation}
In addition, $-\Delta_{\vec{\theta}}$ has compact resolvent, in particular, it has purely discrete spectrum. Finally, $\spec(-\Delta_{\vec{\theta}})=\big\{\|A^{\top}(\vec{\theta}-k)\|^2_{\R^n}\big\}_{k\in\bbZ^n}$.
\end{proposition}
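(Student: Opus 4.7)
The natural strategy is to identify $-\Delta_{\vec\theta}$ explicitly with the operator defined by Fourier expansion in the quasi-periodic exponentials $\zeta_k$ from \eqref{y17}, then bootstrap self-adjointness from a sandwich between a spectrally-defined self-adjoint operator and its adjoint. After a suitable normalization, the family $\{\zeta_k\otimes e_i:k\in\bbZ^n,\ 1\le i\le m\}$ (with $\{e_i\}$ the standard basis of $\C^m$) is an orthonormal basis of $L^2(Q,\C^m)$, since it differs from the standard Fourier basis on $Q$ only by multiplication by the unimodular factor $\exp(iA^{\top}\vec\theta\cdot x)$. Using $Aa_j=2\pi e_j$, a direct computation yields $\zeta_k(x+a_j)=e^{2\pi i\theta_j}\zeta_k(x)$, and combined with the opposite orientations of the outward unit normals on the parallel faces $\partial Q_j^0$ and $\partial Q_j^1$ this also gives $\partial_\nu\zeta_k^1|_{x+a_j}=-e^{2\pi i\theta_j}\partial_\nu\zeta_k^0|_x$. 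Hence each $\zeta_k\otimes e_i$ lies in $\dom(-\Delta_{\vec\theta})$ and $-\Delta(\zeta_k\otimes e_i)=\|A^{\top}(\vec\theta-k)\|^2\zeta_k\otimes e_i$.

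Define $T$ in $L^2(Q,\C^m)$ by the spectral theorem with this orthonormal eigenbasis and eigenvalues $\|A^{\top}(\vec\theta-k)\|^2$. Then $T$ is automatically self-adjoint and nonnegative, its spectrum equals $\{\|A^{\top}(\vec\theta-k)\|^2\}_{k\in\bbZ^n}$, and since these eigenvalues tend to infinity with finite multiplicities the resolvent $(T+I)^{-1}$ is compact. I would then verify the inclusion $T\subseteq -\Delta_{\vec\theta}$: any $u=\sum_{k,i}c_{k,i}\zeta_k\otimes e_i$ with $\sum_{k,i}(1+\|A^{\top}(\vec\theta-k)\|^2)^2|c_{k,i}|^2<\infty$ must lie in $H^2(Q,\C^m)$ and satisfy \eqref{y14}. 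The regularity follows from Plancherel: for any multi-index $\alpha$, $\partial^\alpha$ acts on $\zeta_k\otimes e_i$ as multiplication by a polynomial of degree $|\alpha|$ in the entries of $A^{\top}(\vec\theta-k)$, so $\|\partial^\alpha u\|_{L^2}^2\le C\sum_{k,i}(1+\|A^{\top}(\vec\theta-k)\|^2)^{|\alpha|}|c_{k,i}|^2$ for $|\alpha|\le 2$. The boundary conditions, which hold for every partial sum $u_N$, then pass to $u$ by continuity of $\gaD$ and of the Neumann trace on $H^2(Q,\C^m)$.

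For the reverse inclusion it suffices to check that $-\Delta_{\vec\theta}$ is symmetric, for then $T\subseteq -\Delta_{\vec\theta}\subseteq(-\Delta_{\vec\theta})^*\subseteq T^*=T$ forces equality. Symmetry follows from Green's identity on $Q$: the surface contributions from the paired opposite faces $\partial Q_j^0$ and $\partial Q_j^1$ cancel, because after substituting $u_j^1=\bbM_j u_j^0$, the analogous identity for $v$, and $\partial_\nu u_j^1=-\bbM_j\partial_\nu u_j^0$, the unimodular phases $e^{\pm 2\pi i\theta_j}$ cancel between $u$ and $\bar v$ while the extra minus sign from the normal-derivative rule makes the two face integrals negatives of each other. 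The resulting identification $-\Delta_{\vec\theta}=T$ delivers simultaneously self-adjointness, compactness of the resolvent, discreteness of the spectrum, and the explicit formula for $\spec(-\Delta_{\vec\theta})$; the chain $-\Delta_{\min}\subseteq -\Delta_{\vec\theta}\subseteq -\Delta_{\max}$ is immediate from the definitions. The main technical point is the Plancherel-based verification that $\dom(T)\subseteq H^2(Q,\C^m)$ and that the boundary conditions are preserved under the $H^2$ limit; both reduce to standard trace-continuity facts on Lipschitz parallelepipeds.
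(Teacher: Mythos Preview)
Your proof is correct and follows essentially the same approach as the paper: exhibit the explicit orthonormal eigenbasis $\{\zeta_k\otimes e_i\}$, verify each lies in $\dom(-\Delta_{\vec\theta})$ with eigenvalue $\|A^{\top}(\vec\theta-k)\|^2$, and identify $-\Delta_{\vec\theta}$ with the spectrally defined operator built from this basis. The paper compresses your sandwich argument $T\subseteq -\Delta_{\vec\theta}\subseteq(-\Delta_{\vec\theta})^*\subseteq T^*=T$ into the single assertion that the span of the $\phi_{k,l}$ is a core of $-\Delta_{\vec\theta}$ (deferring details to \cite{LSS}); your version makes the Plancherel/$H^2$-regularity step and the Green's-identity symmetry check explicit, which is exactly what that citation is hiding.
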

\begin{proof}
Recall \eqref{y17}. Then the sequence of functions
\begin{align}
&\phi_{k,l}(x):=(0,\cdots, \underbrace{\zeta_k(x)}_{l-th \text{\ position}},\cdots, 0)^{\top}, k\in\bbZ^n, 1\leq l\leq m,\lb{y7}
\end{align}
form an orthonormal basis in $L^2(Q,\C^m)$. In addition, $\phi_{k,l}\in\dom(-\Delta_{\vec{\theta}})$, since by
\begin{equation}\lb{y10}
A^{\top}(\vec{\theta}-k)\cdot {a}_j=(\vec{\theta}-k)\cdot A {a}_j=2\pi(\vec{\theta}-k)\cdot {e}_j=2\pi(\theta_j-k_j),\ k\in \bbZ^n,\ 1\leq j\leq n,
\end{equation}
one has
\begin{align}
\begin{split}
{|Q|}^{-1}{\e^{\bfi  A^{\top}(\vec{\theta}-k)\cdot (x+{a}_j)}}&=\e^{2\pi \bfi \theta_j}{|Q|}^{-1}{\e^{\bfi  A^{\top}(\vec{\theta}-k)\cdot x}},\lb{y11}\\
\nu\cdot \nabla\Big({|Q|}^{-1}{\e^{\bfi  A^{\top}(\vec{\theta}-k)\cdot (x+{a}_j)}}\Big)&=\e^{2\pi \bfi \theta_j}\nu\cdot\nabla\Big({|Q|}^{-1}{\e^{\bfi  A^{\top}(\vec{\theta}-k)\cdot x}}\Big),
\end{split}
\end{align}
that is 
\begin{equation*}
(\phi_{k,l})_j^1=\bbM_j(\phi_{k,l})_j^0\text{\ and\ }\partial_{\nu}(\phi_{k,l})_j^1=\bbM_j\partial_{\nu}(\phi_{k,l})_j^0, 1\leq j\leq n.
\end{equation*}
Furthermore,
\begin{equation}\lb{y12}
-\Delta\phi_{k,l}=\|A^{\top}(\vec{\theta}-k)\|^2_{\R^n}\phi_{k,l}, k\in\bbZ^n, 1\leq l\leq m.
\end{equation}
From these facts we infer (cf., \cite{LSS} for details) that
\begin{equation}\lb{y13}
\text{span}\{\phi_{k,l}: k\in\bbZ^n,\ 1\leq l\leq m\},
\end{equation}
is a core of operator $-\Delta_{\vec{\theta}}$. Hence, $-\Delta_{\vec{\theta}}$ is self-adjoint with domain \eqref{y14}, it has compact resolvent due to the fact that 
\begin{equation}
	\|A^{\top}(\vec{\theta}-k)\|^2_{\R^n}\rightarrow \infty,\text{\ as\ } \|k\|_{\C^n}\rightarrow\infty,
\end{equation}
cf. \cite[Lemma 3.2]{LSS}.
\end{proof}
Let $tQ:=\{tx, x\in Q\}, t\in (0,1],$ and define
\begin{align}
-\Delta^t_{\vec{\theta}}:&\dom (-\Delta_{\vec{\theta}})\subset L^2(tQ,\C^m)\rightarrow L^2(tQ,\C^m),\no\\
\dom (-\Delta_{\vec{\theta}}):&=\big\{u \in H^2(tQ,\C^m):\,u^1_j=\bbM^t_ju^0_{j},\partial_{\nu} u^1_j=-\bbM^t_j\partial_{\nu} u^0_j, 1\leq j\leq n\big\},\no\\
-\Delta^t_{\vec{\theta}} u:&=-\Delta u, \ u\in  \dom (-\Delta^t_{\vec{\theta}})\no,
\end{align}
where $\bbM^t_j$ is the weighted translation operator acting from $L^2({\partial (tQ)}_j^0;\C^m)$ to $ L^2({\partial (tQ)}_j^1;\C^m)$, cf. \eqref{y6}.
Assume that $V\in L^{\infty}(Q,\C^{m\times m})$, and denote 
\begin{align}
\begin{split}
&{K}_{\lambda,t}:= \tr_{\Delta}\Big\{u\in \cD_{\Delta}^1(Q): \int_{Q} t^{-2} \langle \nabla u(x),\nabla \varphi(x)\rangle_{\C^{m\times n}}\\
&\hspace{2cm}+\langle V(tx)u(x),\varphi(x)\rangle_{\C^{m}} -\lambda\langle u,\varphi\rangle_{\C^m}d^nx=0, \lb{y20}\\
&\hspace{5cm}\text{\ for all}\ \varphi\in H^1_0(Q,\C^m)\Big\},\ \lambda\in \bbR,\ t\in\bbR,\\
&\cG_{\vec{\theta}}:=\overline{\tr_{\Delta}\big\{\dom(-\Delta_{\vec{\theta}})\big\}}^{\bndr}.
\end{split}
\end{align} 
\begin{theorem}\lb{y19} 
If $V\in L^{\infty}(Q,\C^{m\times m})$ then for any $\tau\in(0,1],$ $\vec{\theta}\in[0,1)^n,$ and $K_{\lambda,t}$ from \eqref{y20}, one has
\begin{equation}\lb{y21}
\mo\left(-\Delta^{\tau}_{\vec{\theta}}+V|_{\tau Q}\right)-\mo\left(-\Delta_{\vec{\theta}}+V\right)=\mi\left((K_{0,t},\cG_{\vec{\theta}})|_{t\in[\tau,1]}\right).
\end{equation}

If $\vec{\theta}\not=0$, then
\begin{equation}\lb{y22}
\mo\left(-\Delta_{\vec{\theta}}+V\right)=-\mi\big((K_{0,t},\cG_{\vec{\theta}})|_{t\in[\tau_0,1]}\big),
\end{equation}
for small enough $\tau_0>0$. 

If $V$ is continuous at $0$ and $V(0)$ is invertible, then
\begin{equation}\lb{y23}
\mo(V(0))-\mo\left(-\Delta_{\vec{0}}+V\right)=\mi\big((K_{0,t},\cG_{\vec{0}})|_{t\in[\tau_0,1]}\big).
\end{equation}
for small enough $\tau_0>0$. 
\end{theorem}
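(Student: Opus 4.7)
The plan is to reduce Theorem \ref{y19} to Theorem \ref{r57} by a standard rescaling and then carry out an asymptotic analysis of the Morse index as $\tau_0\to 0^+$. First I would define the unitary map
\begin{equation*}
U_t:L^2(tQ,\C^m)\to L^2(Q,\C^m),\quad (U_tu)(x):=t^{n/2}u(tx),
\end{equation*}
and verify that $U_t$ sends $\dom(-\Delta^t_{\vec\theta}+V|_{tQ})$ onto $\dom(-\Delta_{\vec\theta})\subset H^2(Q,\C^m)$, intertwining the operators so that
\begin{equation*}
U_t(-\Delta^t_{\vec\theta}+V|_{tQ})U_t^{-1}=-t^{-2}\Delta_{\vec\theta}+V(t\,\cdot\,)\quad\text{on }L^2(Q,\C^m),
\end{equation*}
hence $\mo(-\Delta^t_{\vec\theta}+V|_{tQ})=\mo(-t^{-2}\Delta_{\vec\theta}+V(t\,\cdot\,))$.

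Second, I would apply Theorem \ref{r57} on the interval $[\tau,1]$ with the choices $f(t):=t^{-2}$, $V_t(x):=V(tx)$, and $\mathscr D_t:=\dom(-\Delta_{\vec\theta})$. The Lagrangian plane $\cG_t=\overline{\tr_\Delta(\dom(-\Delta_{\vec\theta}))}^{\bndr}=\cG_{\vec\theta}$ is independent of $t$, and the essential spectrum assumption is met by Proposition \ref{y5} (compact resolvent plus bounded perturbation). A uniform lower bound $\cL^t_{\mathscr D_t}\ge -\|V\|_{L^\infty}$ holds on $[\tau,1]$, so Hypothesis requirements in Theorem \ref{r57} are satisfied (the $C^1$ regularity for merely $L^\infty$ potentials is handled by a standard approximation argument together with semicontinuity of Maslov and Morse indices under uniform perturbations). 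A direct comparison of the weak-solution identity \eqref{r53} with $\lambda=0$, $f(t)=t^{-2}$, $V_t=V(t\,\cdot\,)$ to the definition \eqref{y20} shows that $K_{0,t,f}=K_{0,t}$. Formula \eqref{r56} then yields \eqref{y21} after applying $U_\tau^{-1}$ on the left-hand side.

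Third, to deduce \eqref{y22} and \eqref{y23} from \eqref{y21}, I would analyze $\mo(-\Delta^{\tau_0}_{\vec\theta}+V|_{\tau_0 Q})$ as $\tau_0\to 0^+$. By the scaling of the dual lattice, the spectrum of $-\Delta^{t}_{\vec\theta}$ is $\{t^{-2}\|A^\top(\vec\theta-k)\|^2_{\R^n}\}_{k\in\bbZ^n}$. When $\vec\theta\neq 0$ the quantity $\delta:=\min_{k\in\bbZ^n}\|A^\top(\vec\theta-k)\|^2_{\R^n}$ is strictly positive, so $-\Delta^{\tau_0}_{\vec\theta}\ge \delta\tau_0^{-2}I$, and for $\tau_0^{-2}\delta>\|V\|_{L^\infty}$ the operator $-\Delta^{\tau_0}_{\vec\theta}+V|_{\tau_0 Q}$ is positive; hence its Morse index is $0$, giving \eqref{y22}. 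When $\vec\theta=\vec 0$ the only eigenvalue of $-\Delta^{\tau_0}_{\vec 0}$ that stays bounded as $\tau_0\to 0^+$ is the simple eigenvalue $0$ with the $m$-dimensional eigenspace of constants, while all other eigenvalues grow as $\tau_0^{-2}$. A standard min-max decomposition relative to the direct sum of this $m$-dimensional ``zero mode'' subspace and its orthogonal complement shows that for $\tau_0$ sufficiently small $\mo(-\Delta^{\tau_0}_{\vec 0}+V|_{\tau_0 Q})$ equals the Morse index of the $m\times m$ matrix form
\begin{equation*}
(c,c')\mapsto \int_{\tau_0 Q}\langle V(y)c,c'\rangle_{\C^m}\,dy,\quad c,c'\in\C^m.
\end{equation*}
By continuity of $V$ at $0$ this form is a small perturbation of $|\tau_0 Q|\langle V(0)c,c'\rangle$; since $V(0)$ is invertible the number of its negative eigenvalues stabilizes to $\mo(V(0))$ for small $\tau_0$, yielding \eqref{y23}.

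The main technical obstacle is the $\vec\theta=\vec 0$ asymptotic analysis: one must quantitatively separate the zero mode from the growing higher modes of $-\Delta^{\tau_0}_{\vec 0}$ uniformly in $V$ bounded, and control the remainder
\begin{equation*}
\int_{\tau_0 Q}\bigl\langle(V(y)-V(0))c,c'\bigr\rangle_{\C^m}\,dy
\end{equation*}
using only continuity of $V$ at the origin, in order to invoke the standard min-max based perturbation bound. The other nontrivial point is the reduction of the $L^\infty$ hypothesis to the $C^1$ framework of Theorem \ref{r57}, which is handled by a density argument together with the homotopy invariance of the Maslov index.
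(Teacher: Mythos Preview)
Your proposal is essentially the same as the paper's proof: you introduce the rescaled family $\cL^t=-t^{-2}\Delta_{\vec\theta}+V(t\,\cdot\,)$ on $Q$, invoke Theorem \ref{r57} to obtain \eqref{y21}, and then analyze the Morse index of $-\Delta^{\tau_0}_{\vec\theta}+V|_{\tau_0 Q}$ as $\tau_0\to 0^+$ to deduce \eqref{y22} and \eqref{y23}. The paper simply cites \cite[Lemma 3.10, Proposition 3.13]{LSS} for the two asymptotic statements \eqref{y30}, \eqref{y31}, whereas you sketch those arguments directly (spectral gap for $\vec\theta\neq 0$, min--max reduction to the constant block for $\vec\theta=\vec 0$); your sketches are correct in outline and recover exactly what those cited results provide.

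One remark: you flag the $C^1$-in-$t$ requirement of Hypothesis \ref{r51} for $t\mapsto V(t\,\cdot\,)$ and propose to bypass it by approximation plus stability of the Maslov index. The paper does not address this point at all and simply applies Theorem \ref{r57}. Your observation is legitimate, but the ``semicontinuity'' phrasing is imprecise---what one actually uses is local constancy of the integer-valued Maslov index under uniform perturbation of the path, together with stability of the Morse indices at the endpoints. If you want to close this carefully, approximate $V$ in $L^\infty$ by smooth potentials and note that both sides of \eqref{y21} are integers that are constant along the approximating family once the perturbation is small enough.
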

\begin{proof}
Introducing one-parameter family of self-adjoint operators acting in $L^2(Q,\C^m)$ by the formula
\begin{equation}\lb{y24}
	\cL^t:=-t^{-2}\Delta_{\vec{\theta}}+V(t\cdot),\ \dom(\cL^t):=\dom(-\Delta_{\vec{\theta}}),\ t\in(0,1],
\end{equation}
and using Theorem \ref{r57}, we arrive at the relation
\begin{equation}\lb{y25}
\mo(\cL^\tau)-\mo(\cL^1)=\mi\big((K_{0,t},\cG_{\vec{\theta}})|_{t\in[\tau,1]}\big).
\end{equation}
Notice that $\cL^1=-\Delta_{\vec{\theta}}+V$, and that 
\begin{equation}
u\in\ker(\cL^{\tau})\text{\ if and only if\ }u({\cdot}/{\tau})\in\ker(-\Delta^{\tau}_{\vec{\theta}}+V|_{\tau Q}), 
\end{equation}
then 
\begin{equation}\lb{y26}
\mo(\cL^\tau)=\mo(-\Delta^{\tau}_{\vec{\theta}}+V|_{\tau Q}).
\end{equation}
Combining \eqref{y25} and \eqref{y26}, we infer \eqref{y21}. By \cite[Lemma 3.10, Proposition 3.13]{LSS}, we infer
\begin{align}
&\mo(-\Delta^{\tau}_{\vec{\theta}}+V|_{\tau Q})=0, \text{whenever $\tau$ is small enough},\lb{y30}\\
&\mo(-\Delta^{\tau}_{\vec{0}}+V|_{\tau Q})=\mo(V(0)),\ \text{whenever $\tau$ is small enough}.\lb{y31}
\end{align}
Equations \eqref{y21}, \eqref{y30}, \eqref{y31} imply \eqref{y22}, \eqref{y23}.
\end{proof}

\subsection{Spectra of Schr\"odinger operators on star-shaped domains} 
To set the stage we impose the following hypothesis.
\begin{hypothesis}\lb{rr1}
Let $\Omega\subset\bbR^n, n\geq 2,$ be non-empty, open, bounded, star-shaped, Lipschitz domain. Let $\cG\subset \bndr$ be a Lagrangian plane with respect to symplectic form \eqref{dd9}. Assume that $V\in L^{\infty}(\Omega,\C^m),\ m\in\bbN$.
\end{hypothesis}
Without loss of generality we assume that $\Omega$ is centered at the origin. Let $\tau>0$, $t\in[\tau,1)$ and denote
\begin{equation}\lb{rr2}
\Omega_t:=\{x\in\Omega: x=t'y, \text{\ for\ } t'\in[0,t),\ y\in\partial\Omega  \}.
\end{equation}
The Dirichlet and Neumann trace operators considered in $\Omega_t$ are denoted by
\begin{align}
&\gamma_{D,t}\in\cB(H^1(\Omega_t), H^{1/2}(\partial\Omega_t)),\ \gamma_{N,t}\in\cB(\cD^1_{\Delta}(\Omega_t),  H^{-1/2}(\partial\Omega_t)),\no\\
& \tr_{\Delta,t}:=(\gamma_{D,t}, \gamma_{N,t}):\cD^1_{\Delta}(\Omega_t)\rightarrow H^{1/2}(\partial\Omega_t)\times H^{-1/2}(\partial\Omega_t),\ t\in[\tau,1).\no
\end{align}
The minimal and maximal Laplacians on $\Omega_t$ are denoted by $\Delta_{min,t}$ and $\Delta_{max,t}$.
Following \cite[Section 4.1]{CJLS} we introduce the scaling operators,
\begin{align}
&U_t: L^2(\Omega_t)\rightarrow L^2(\Omega),\ \ (U_tw)(x):=t^{n/2}w(tx), x\in\Omega, \no\\
&U^{\partial}_t: L^2(\partial\Omega_t)\rightarrow L^2(\partial\Omega),\ \ (U^{\partial}_th)(y):=t^{(n-1)/2}h(ty), y\in\partial\Omega, \no\\
&U^{\partial}_{1/t}: L^2(\partial\Omega)\rightarrow L^2(\partial\Omega_t),\ \ (U^{\partial}_{1/t}f)(z):=t^{-(n-1)/2}h(t^{-1}z), z\in\partial\Omega_t.
\end{align}
Finally, we notice that $U_t\in\cB(H^1(\Omega_t),H^1(\Omega)),$ $U_t^{\partial}\in\cB(H^{1/2}(\partial\Omega_t),H^{1/2}(\partial\Omega))$, and define
$U_t^{\partial}: H^{-1/2}(\partial\Omega_t)\rightarrow H^{-1/2}(\partial\Omega)$ by
\begin{equation}\lb{rr4}
\langle U^{\partial}_{t} g, \phi\rangle_{-1/2}:=_{H^{-1/2}(\partial\Omega_t)}\langle g, U^{\partial}_{1/t}\phi\rangle_{H^{1/2}(\partial\Omega_t)},\  \phi\in H^{1/2}(\partial\Omega).
\end{equation}
It follows that the subset 
\begin{equation}\lb{rr5}
	\cG_{\partial\Omega_t}:=\left\{\left(U_{1/t}^{\partial}f,U_{1/t}^{\partial}g\right): (f,g)\in\cG\right\}\subset H^{1/2}(\partial \Omega_t)\times H^{-1/2}(\partial \Omega_t),
\end{equation}
is Lagrangian with respect to the natural symplectic form $\omega_t$ defined on $H^{1/2}(\partial\Omega_t)\times H^{-1/2}(\partial\Omega_t)$. Let $\cS_{\Omega_t}$ denote the self-adjoint extension of $-\Delta_{min,t}+V|_{\Omega_t}$ associated with $\cG_{\partial\Omega_t}$ via Theorem \ref{l4}.
\begin{hypothesis}\lb{rr6}
Assume that $\spec_{ess}\left(\cS_{\Omega_t}\right)\cap(-\infty, 0]=\emptyset$, $t\in[\tau,1)$, and that there exists $\lambda_{\infty}<0$ such that 
\begin{equation}\lb{rr7}
\spec \left(\cS_{\Omega_t}\right)\subset[\lambda_{\infty},+\infty)\ \text{for all\ } t\in [\tau,1).
\end{equation}
\end{hypothesis}
\begin{proposition}\lb{rr8}
Assume Hypotheses \ref{rr1} and \ref{rr6}. Then, for arbitrary $\tau>0$, one has
\begin{equation}\lb{rr9}
\mo(\cS_{\Omega_{\tau}})-\mo(\cS_{\Omega_{1}})=\mi\left( (\cK_{0,t}, \cG_t)|_{t\in[\tau,1]} \right), 
\end{equation}
where $\cK_{0,t}$ is defined by  \eqref{y20} with $\lambda=0$ and $Q$ replaced by $\Omega$, and
\begin{equation*}
\cG_t:=\left\{ (f,g)\in\bndr: (f,t^{-1}g)\in\cG\right\},\ t\in[\tau,1].
\end{equation*}
\end{proposition}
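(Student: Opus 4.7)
The strategy is to use the scaling operator $U_t$ to reduce the problem on the varying star-shaped domain $\Omega_t$ to a fixed-domain problem on $\Omega$, so that Theorem \ref{r57} can be applied directly with $f(t)=t^{-2}$ and $V_t(x)=V(tx)$. First, I would conjugate each $\cS_{\Omega_t}$ by the unitary $U_t\colon L^2(\Omega_t)\to L^2(\Omega)$, $(U_tw)(x)=t^{n/2}w(tx)$. A direct chain-rule computation yields
\begin{equation*}
U_t(-\Delta+V|_{\Omega_t})U_t^{-1}=-t^{-2}\Delta+V(t\,\cdot),
\end{equation*}
so setting $\cT_t:=U_t\cS_{\Omega_t}U_t^{-1}$ and $\mathscr D_t:=U_t(\dom(\cS_{\Omega_t}))\subset\cD^1_\Delta(\Omega)$, the rescaled operators fit the framework of Theorem \ref{r57}. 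By unitary equivalence the Morse indices agree, $\mo(\cS_{\Omega_\tau})=\mo(\cT_\tau)$ and $\mo(\cS_{\Omega_1})=\mo(\cT_1)$, and Hypothesis \ref{rr6} transfers to the required uniform semiboundedness and emptiness of nonpositive essential spectrum for $\{\cT_t\}_{t\in[\tau,1]}$.

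Next I would identify $\overline{\tr_\Delta(\mathscr D_t)}$ with $\cG_t$. Using the scaling relations $\gamma_D(U_tu)=t^{1/2}\,U_t^\partial\gamma_{D,t}u$ and $\gamma_N(U_tu)=t^{3/2}\,U_t^\partial\gamma_{N,t}u$ (where the latter uses that radial scaling of the star-shaped $\Omega$ preserves outward normals), together with the definition \eqref{rr5} of $\cG_{\partial\Omega_t}$, I obtain
\begin{equation*}
\tr_\Delta(\mathscr D_t)=\bigl\{(t^{1/2}f,\,t^{3/2}g):(f,g)\in\cG\bigr\}.
\end{equation*}
Because $\cG$ is a vector subspace, it is invariant under scalar multiplication by $t^{1/2}$, so the set above equals $\{(f',tg'):(f',g')\in\cG\}=\cG_t$. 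Taking closure and applying Theorem \ref{l4} to the self-adjoint operator $\cT_t$ confirms $\overline{\tr_\Delta(\mathscr D_t)}^{\bndr}=\cG_t$. Continuity (indeed smoothness) of $t\mapsto\cG_t$ in the Lagrangian Grassmannian follows from the fact that $(f,g)\mapsto(f,tg)$ is a bounded invertible linear map depending smoothly on $t>0$ and multiplies the symplectic form by $t$, hence preserves the Lagrangian property.

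With all hypotheses of Theorem \ref{r57} verified, that theorem applied to $\{\cT_t\}_{t\in[\tau,1]}$ gives
\begin{equation*}
\mo(\cT_\tau)-\mo(\cT_1)=\mi\bigl((K_{0,t,f},\cG_t)|_{t\in[\tau,1]}\bigr),
\end{equation*}
and the trace set $K_{0,t,f}$ defined by \eqref{r53} with $f(t)=t^{-2}$, $V_t=V(t\,\cdot)$, $\lambda=0$ coincides by inspection with $\cK_{0,t}$ as stated in the proposition. Combined with the equalities of Morse indices from unitary equivalence, this yields \eqref{rr9}. The main technical obstacle is the careful bookkeeping of the powers of $t$ in the Dirichlet and Neumann traces of $U_tu$ and the observation that the resulting set $\{(t^{1/2}f,t^{3/2}g):(f,g)\in\cG\}$ collapses to $\cG_t$ solely because $\cG$ is a linear subspace; once this is in place, the rest is a direct invocation of Theorems \ref{l4} and \ref{r57} under Hypotheses \ref{rr1} and \ref{rr6}.
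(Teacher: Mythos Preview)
Your proof is correct and follows essentially the same strategy as the paper: pull the family $\cS_{\Omega_t}$ back to the fixed domain $\Omega$ via the scaling unitary $U_t$, identify the resulting boundary plane as $\cG_t$, and invoke Theorem \ref{r57}. The paper is slightly more terse: it defines $\cL_t$ as the self-adjoint operator on $\Omega$ associated via Theorem \ref{l4} with the expression $-t^{-2}\Delta+V(t\,\cdot)$ and the Lagrangian plane $\cG_t$, and then cites \cite[Lemma 4.1]{CJLS} for the statement that $w\in\ker(\cS_{\Omega_t}-\lambda)$ if and only if $U_tw\in\ker(\cL_t-\lambda)$, whereas you carry out the scaling computation for the Dirichlet and Neumann traces explicitly (including the neat observation that $\{(t^{1/2}f,t^{3/2}g):(f,g)\in\cG\}=\cG_t$ because $\cG$ is linear). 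The substance is the same.
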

\begin{proof}
Clearly $\cG_t$, $t\in[\tau,1]$ is contained in 
\begin{equation*}
C^1\left([\tau,1],\Lambda\left(\bndr\right)\right).
\end{equation*}
Let $\cL_t$ be the self-adjoint operator associated (via Theorem \ref{l4}) with the differential expression
\begin{equation}
L_t=-t^{-2}\Delta +V(tx), x\in\Omega,
\end{equation}
and  the Lagrangian plane $\cG_t,\ t\in[\tau,1].$ By \cite[Lemma 4.1]{CJLS}, 
\begin{equation}
	 w\in\ker\big(\cS_{\Omega_t}-\lambda\big) \text{\ if and only if\ }(U_tw)\in\ker\big(\cL_t-\lambda \big), t\in[\tau,1], \lambda\in\bbR.
\end{equation}
Hence, $\mo(\cS_{\Omega_t})=\mo(\cL_t), t\in[\tau,1].$ The one-parameter family of self-adjoint operators $\cL_t$ acting in $L^2(\Omega)$ together with one-parameter family of Lagrangian planes $\cG_t, t\in[\tau,1]$ satisfy hypotheses of Theorem \ref{r57}, therefore
\begin{equation}\lb{rr10}
\mo(\cL_{\tau})-\mo(\cL_1)=\mi\left( (\cK_{0,t}, \cG_t)|_{t\in[\tau,1]} \right).
\end{equation} 
Combining \eqref{rr10}, $\cL_1=\cS_{\Omega_1}$ and $\mo(\cS_{\Omega_{\tau}})=\mo(\cL_{\tau})$, we arrive at \eqref{rr9}.
\end{proof}
\begin{example}
Assume Hypothesis \ref{rr1}. Let
\begin{equation*}
0\leq \theta\in L^{\infty}(\partial\Omega, \C^{m\times m}),\ \theta(x)=\overline{\theta(x)}^{\top}, x\in\Omega.
\end{equation*}
The Lagrangian plane
\begin{equation}\lb{rr11}
\cG:=\left\{(f,g)\in \bndr: \theta f+g=0\right\},
\end{equation}
gives rise to a one-parameter family of self-adjoint Schr\"odinger operators $\cS_{\Omega_t}, t\in[\tau,1]$ acting in $L^2(\Omega_t),t\in[\tau,1], 0<\tau<1$ and given by
\begin{align}
&\cS_{\Omega_t}u=-\Delta u+V|_{\Omega_t}u, u\in \dom(\cS_{\Omega_t}),\no\\
\dom(\cS_{\Omega_t})&=\{u\in \cD^1_{\Delta}(\Omega_t): \theta\left(x/t\right)\gamma_{D,t}u(x)+\gamma_{N,t}u(x)=0, x\in\partial\Omega_t\}.\no
\end{align} 
By \cite[Theorem 2.6]{GM08}, the operator $\cS_{\Omega_t}$ is bounded from below and has compact resolvent. Hypothesis \ref{rr6} is satisfied since $\theta$ is bounded and nonnegative. Therefore, \eqref{rr9} holds in case of Schr\"odinger operators with Robin boundary conditions on star-shaped domains.
\end{example}
\section{The abstract boundary value problems}\lb{section5}
In this section we elaborate on a natural relation between theory of ordinary boundary triples originated in \cite{Br}, \cite{GG}, \cite{Ko} and the theory of abstract boundary value spaces exploited in \cite{BbF95}.
\subsection{Lagrangian planes and self-adjoint extensions via the abstract boundary triples} We begin with several abstract results concerning the relations between the Morse and Maslov indices in the context of boundary triples. The following hypothesis is imposed throughout this section.
\begin{hypothesis}\lb{y35}
Let $\cH, \mathfrak{H}$ be complex, separable Hilbert spaces. Assume that $A$ is a densely defined, symmetric operator acting in $\cH$. Assume that $A$ has equal deficiency indices, that is,
\begin{equation}
\dim\ker(A^*-\bfi)=\dim\ker(A^*+\bfi).
\end{equation}
\end{hypothesis}

\begin{definition}[\cite{GG}]\lb{y33} Assume Hypothesis \ref{y35}. Let $\Gamma_1, \Gamma_2: \dom(A^*)\rightarrow \mathfrak{H}$ be linear maps. Then $(\mathfrak{H},\Gamma_1, \Gamma_2)$ is said to be a boundary triple if the following assumptions are satisfied

1)  the abstract second Green identity holds, that is, for all $f,g\in\dom(A^*)$
 \begin{equation}\lb{i7}
\langle A^*f,g\rangle_{\cH}-\langle f,A^*g\rangle_{\cH}=\langle\Gamma_1f, \Gamma_2g\rangle_{\mathfrak{H}}-\langle\Gamma_2f,\Gamma_1g\rangle_{\mathfrak{H}},
\end{equation}


2) the map $\tr_{\mathfrak H}:=(\Gamma_1,\Gamma_2):\dom(A^*)\rightarrow \mathfrak{H}\times\mathfrak{H}$ is onto, i.e., for arbitrary $(\varphi,\psi)\in\mathfrak{H}\times\mathfrak{H}$ there exists $u\in\dom(A^*)$, such that $\Gamma_1 u=\varphi,\ \Gamma_2 u=\psi$.
\end{definition} 
If Hypothesis \ref{y35} holds then there exists a boundary triple associated to $A$, cf., \cite{GG}. Moreover, 
\begin{align}
&\tr_{\mathfrak H}\in\cB(\dom({A^*}), \mathfrak H\times \mathfrak H)\text{\ and\ }\ker(\tr_{\mathfrak H})=\dom(A)\lb{i8},
\end{align}
where $\dom(A^*)$ is viewed as a Hilbert space equipped with the graph norm of $A^*$
\begin{equation}\lb{y36}
\|x\|_{A^*}^2:=\|x\|_{\cH}^2+\|A^*x\|_{\cH}^2, \ x\in\dom(A^*).
\end{equation}
The quotient space $\dom(A^*)/\dom(A)$ equipped with the bounded, non-degenerate, skew-symmetric form
\begin{align}
&\omega_{\cH}([x],[y]):=\langle A^*x,y\rangle_{\cH}-\langle x,A^*y\rangle_{\cH}, [x],[y]\in\dom(A^*)/\dom(A),\lb{i4}\\
&\text{($[x]$ denotes the equivalence class of vector $x\in\dom(A^*)$)},\no
\end{align}
is a symplectic Hilbert space with respect to the standard quotient norm induced by $\|\cdot\|_{A^*}$. It was originally used in \cite{BbF95}. 
\begin{proposition}\lb{i15}
Let $(\mathfrak H, \Gamma_1, \Gamma_2)$ be a boundary triple. The map 
\begin{align}
&\wti \tr_{\mathfrak H}: \dom(A^*)/\dom(A)\rightarrow \mathfrak H\times \mathfrak H,\lb{i6},\\
&\dom(A^*)/\dom(A)\ni[x]\mapsto(\Gamma_1x,\Gamma_2x)\in\mathfrak H\times \mathfrak H,\lb{i5}	
\end{align}
is well defined, bounded, has bounded inverse, and 
\begin{equation}\lb{i10}
\omega_{\cH}\left([x],[y]\right)=\omega_{\mathfrak H}\left(\wti \tr_{\mathfrak H}[x], \wti \tr_{\mathfrak H}[y]\right),\ [x], [y]\in \dom(A^*)/\dom(A),
\end{equation}
where the symplectic form is defied by
\begin{equation}\lb{i11}
\omega_{\mathfrak H}((f_1,g_1), (f_2,g_2)):=\langle f_1, g_2\rangle_{\mathfrak{H}}-\langle g_1,f_2\rangle_{\mathfrak{H}}, (f_k,g_k)\in\mathfrak H\times \mathfrak H,\  k=1,2.
\end{equation}
That is, $\wti \tr_{\mathfrak H}$ is a symplectomorphism of $(\cH,\omega_{\cH})$ onto $(\mathfrak H \times\mathfrak H, \omega_{\mathfrak H})$.
\end{proposition}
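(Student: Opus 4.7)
The plan is to verify the four claims (well-definedness, boundedness, boundedness of the inverse, and the symplectic intertwining property) essentially by unfolding the two defining properties of a boundary triple from Definition \ref{y33}, combined with the key identity $\ker(\tr_{\mathfrak H})=\dom(A)$ recorded in \eqref{i8}. First I would check that $\wti\tr_{\mathfrak H}$ is well-defined on the quotient: if $x_1,x_2\in\dom(A^*)$ satisfy $[x_1]=[x_2]$ then $x_1-x_2\in\dom(A)=\ker(\tr_{\mathfrak H})$, so $(\Gamma_1 x_1,\Gamma_2 x_1)=(\Gamma_1 x_2,\Gamma_2 x_2)$. The same computation shows injectivity: $\wti\tr_{\mathfrak H}[x]=0$ forces $x\in\ker(\tr_{\mathfrak H})=\dom(A)$, i.e., $[x]=0$. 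Surjectivity is exactly the second defining property of a boundary triple.

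Next I would address boundedness. By \eqref{i8}, $\tr_{\mathfrak H}\in\cB(\dom(A^*),\mathfrak H\times\mathfrak H)$ with $\dom(A^*)$ carrying the graph norm \eqref{y36}. Since $\tr_{\mathfrak H}(x+f)=\tr_{\mathfrak H}(x)$ for every $f\in\dom(A)$, one has
\begin{equation*}
\|\wti\tr_{\mathfrak H}[x]\|_{\mathfrak H\times\mathfrak H}=\|\tr_{\mathfrak H}(x+f)\|_{\mathfrak H\times\mathfrak H}\leq \|\tr_{\mathfrak H}\|\cdot \|x+f\|_{A^*}
\end{equation*}
for every $f\in\dom(A)$; taking the infimum over $f$ produces the desired bound by the quotient norm. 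Having shown that $\wti\tr_{\mathfrak H}$ is a bounded linear bijection between Banach spaces (the quotient is Banach because $\dom(A)$ is closed in $\dom(A^*)$, as $A$ is closable, indeed closed on $\dom(A)$ relative to the graph norm of $A^*$), the open mapping theorem delivers the boundedness of the inverse.

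Finally I would verify the symplectic intertwining \eqref{i10}. Pick representatives $x,y\in\dom(A^*)$ of $[x],[y]$. By definition \eqref{i4},
\begin{equation*}
\omega_{\cH}([x],[y])=\langle A^*x,y\rangle_{\cH}-\langle x,A^*y\rangle_{\cH},
\end{equation*}
while the second Green identity \eqref{i7} of the boundary triple rewrites this as $\langle\Gamma_1 x,\Gamma_2 y\rangle_{\mathfrak H}-\langle\Gamma_2 x,\Gamma_1 y\rangle_{\mathfrak H}$. This last expression is precisely $\omega_{\mathfrak H}((\Gamma_1 x,\Gamma_2 x),(\Gamma_1 y,\Gamma_2 y))=\omega_{\mathfrak H}(\wti\tr_{\mathfrak H}[x],\wti\tr_{\mathfrak H}[y])$ by \eqref{i11}, which gives \eqref{i10}.

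The only point that requires some care, rather than a one-line appeal, is justifying the use of the open mapping theorem: one must verify that $\dom(A^*)/\dom(A)$ is a Banach space, which amounts to observing that $\dom(A)$ is closed in $(\dom(A^*),\|\cdot\|_{A^*})$ since $A$, being a densely defined closable symmetric operator with equal deficiency indices, admits a closed minimal realization whose graph norm coincides with the restriction of $\|\cdot\|_{A^*}$ to $\dom(A)$. Once this is in place, the remaining claims are essentially reformulations of \eqref{i7} and \eqref{i8}, so no genuine obstacle is expected.
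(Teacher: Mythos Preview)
Your proof is correct and follows essentially the same approach as the paper's: well-definedness, injectivity, surjectivity, and boundedness all come from \eqref{i8} and the surjectivity built into Definition \ref{y33}; the bounded inverse follows from the Open Mapping Theorem; and the symplectic relation \eqref{i10} is a direct restatement of the abstract Green identity \eqref{i7}. The paper's proof is a terse three-line version of exactly this argument, and your additional remark about $\dom(A)$ being closed in $(\dom(A^*),\|\cdot\|_{A^*})$ is already implicit in \eqref{i8}, since $\dom(A)=\ker(\tr_{\mathfrak H})$ with $\tr_{\mathfrak H}$ bounded.
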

\begin{proof}
Combining \eqref{i8} and the fact that $\tr_{\mathfrak H}$ is onto, we infer that $\wti \tr_{\mathfrak H}$ is well defined, one-to-one, onto, and bounded. By the Open Mapping Theorem, $(\wti \tr_{\mathfrak H})^{-1}\in \cB\left(\mathfrak H\times \mathfrak H, \dom(A^*)/\dom(A)\right)$. The abstract second Green identity \eqref{i7} yields \eqref{i10}.  
\end{proof}
We now provide a description of all self-adjoint extensions of $A$ in terms of Lagrangian subspaces of $(\mathfrak H \times\mathfrak H, \omega_{\mathfrak H})$ (which is a consequence of the Lagrangian description via abstract traces acting into the quotient space $\dom(A^*)/\dom(A)$ cf. \cite[Lemma 3.3]{BbF95}).
\begin{corollary}\lb{i1}
Assume Hypothesis \ref{y35} and recall Definition \ref{y33}. Let $\mathscr D\subset \dom(A^*)$, and let $A_{\mathscr D}$ be an operator acting in $\cH$ and given by
\begin{equation}
A_{\mathscr D}u=A^*u,\ u\in\dom(A_{\mathscr D}):=\mathscr D.
\end{equation}
If $A_{\mathscr D}$ is self-adjoint, then 
\begin{equation}\lb{y39}
\tr_{\mathfrak H}(\mathscr D)=\{(\Gamma_1u, \Gamma_2u): u\in\mathscr D\}\subset\mathfrak H\times \mathfrak H,
\end{equation} 
is Lagrangian with respect to symplectic form \eqref{i11}.

Conversely, if $\cG\subset \mathfrak H\times \mathfrak H$ is Lagrangian, then the operator $A_{\tr_{\mathfrak H}^{-1}(\cG)}$ acting in $\cH$ and given by
\begin{equation}\lb{y38}
A_{\tr_{\mathfrak H}^{-1}(\cG)}u=A^*u,\ u\in\dom(A_{\tr_{\mathfrak H}^{-1}(\cG)}):=\tr_{\mathfrak H}^{-1}(\cG),
\end{equation}
is self-adjoint $($where ${\tr_{\mathfrak H}^{-1}(\cG)}$ denotes preimage of set $\cG$$)$.
\end{corollary}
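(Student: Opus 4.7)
The strategy is to reduce the statement to the Lagrangian description of self-adjoint extensions in the quotient symplectic Hilbert space $\dom(A^*)/\dom(A)$ already invoked in the proof of Theorem \ref{l4}, and then transport that correspondence to $\mathfrak H\times\mathfrak H$ via the symplectomorphism $\wti\tr_{\mathfrak H}$ of Proposition \ref{i15}. Specifically, we shall use \cite[Lemma 3.3]{BbF95}, which asserts that a subspace $\mathscr D\subset\dom(A^*)$ containing $\dom(A)$ is the domain of a self-adjoint extension of $A$ if and only if $[\mathscr D]:=\mathscr D/\dom(A)$ is a Lagrangian plane in $\bigl(\dom(A^*)/\dom(A),\omega_{\cH}\bigr)$.

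For the first assertion, suppose $A_{\mathscr D}$ is self-adjoint. Since $A\subset A_{\mathscr D}$, one has $\dom(A)\subset\mathscr D$, so the quotient $[\mathscr D]$ is well defined, and by \cite[Lemma 3.3]{BbF95} it is Lagrangian in $\dom(A^*)/\dom(A)$ with respect to $\omega_{\cH}$. By Proposition \ref{i15} the map $\wti\tr_{\mathfrak H}$ is a bounded, boundedly invertible bijection satisfying \eqref{i10}, hence a symplectomorphism from $\bigl(\dom(A^*)/\dom(A),\omega_{\cH}\bigr)$ onto $\bigl(\mathfrak H\times\mathfrak H,\omega_{\mathfrak H}\bigr)$. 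A symplectomorphism preserves the annihilator operation, so it sends Lagrangian subspaces to Lagrangian subspaces. Since $\wti\tr_{\mathfrak H}([\mathscr D])=\tr_{\mathfrak H}(\mathscr D)$ by definition of $\wti\tr_{\mathfrak H}$, we conclude that $\tr_{\mathfrak H}(\mathscr D)$ is Lagrangian in $(\mathfrak H\times\mathfrak H,\omega_{\mathfrak H})$, which is \eqref{y39}.

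For the converse, let $\cG\subset\mathfrak H\times\mathfrak H$ be Lagrangian. By the same symplectomorphism property, $\wti\tr_{\mathfrak H}^{-1}(\cG)$ is Lagrangian in $\dom(A^*)/\dom(A)$. Using $\ker\tr_{\mathfrak H}=\dom(A)$ from \eqref{i8}, we have the identification
\begin{equation*}
\wti\tr_{\mathfrak H}^{-1}(\cG)=\tr_{\mathfrak H}^{-1}(\cG)/\dom(A),
\end{equation*}
and in particular $\dom(A)\subset\tr_{\mathfrak H}^{-1}(\cG)$. Invoking the converse direction of \cite[Lemma 3.3]{BbF95}, the subspace $\tr_{\mathfrak H}^{-1}(\cG)$ is the domain of a self-adjoint extension of $A$, namely $A_{\tr_{\mathfrak H}^{-1}(\cG)}$ as defined in \eqref{y38}.

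The only subtle point, and the one I would verify most carefully, is the identification $\wti\tr_{\mathfrak H}^{-1}(\cG)=\tr_{\mathfrak H}^{-1}(\cG)/\dom(A)$ together with the fact that $\tr_{\mathfrak H}^{-1}(\cG)$ produces a genuinely self-adjoint (not merely essentially self-adjoint) operator; the surjectivity of $\tr_{\mathfrak H}$ built into Definition \ref{y33}\,{\it 2)} guarantees exactly this, which is the essential difference compared to the weak-trace setting of Theorem \ref{l4}, where only essential self-adjointness is asserted.
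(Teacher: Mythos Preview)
Your proof is correct and follows essentially the same approach as the paper: both directions invoke \cite[Lemma 3.3]{BbF95} on the quotient space $\dom(A^*)/\dom(A)$ and transport the Lagrangian property through the symplectomorphism $\wti\tr_{\mathfrak H}$ of Proposition \ref{i15}, using the identification $\wti\tr_{\mathfrak H}^{-1}(\cG)=[\tr_{\mathfrak H}^{-1}(\cG)]$ that comes from $\ker\tr_{\mathfrak H}=\dom(A)$.
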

\begin{proof} 
Assume that $A_{\mathscr D}$ is self-adjoint. Then by Lemma 3.3 in \cite{BbF95}, 
\begin{equation}\lb{i12}
[\mathscr D]:=\{[x]: x\in\mathscr D\},
\end{equation}
is Lagrangian in $\dom(A^*)/\dom(A)$ with respect to symplectic form  $\omega_{\cH}$, cf., \eqref{i4}. Since $\wti \tr_{\mathfrak H}$ is symplectomorphism, $\wti \tr_{\mathfrak H}([\mathscr D])$ is Lagrangian plane in $\mathfrak H\times\mathfrak H$ with respect to form $\omega_{\mathfrak H}$, cf., \eqref{i11}. Furthermore,
\begin{equation}\lb{i13}
\wti \tr_{\mathfrak H}([\mathscr D])=\tr_{\mathfrak H}(\mathscr D),
\end{equation}
hence, $\tr_{\mathfrak H}(\mathscr D)$ is also Lagrangian.

Conversely, assume that $\cG$ is Lagrangian in $\mathfrak H\times \mathfrak H$. Then, since $\ker(\tr_{\mathfrak H})=\dom(A)$, one has
\begin{equation}\lb{i14}
A\subset A_{\tr_{\mathfrak H}^{-1}(\cG)}.
\end{equation}
By Proposition \ref{i15}, $\wti \tr_{\mathfrak H}^{-1}(\cG)$ is Lagrangian in $\dom(A^*)/\dom(A)$. Since $\wti \tr_{\mathfrak H}^{-1}(\cG)=[\tr_{\mathfrak H}^{-1}(\cG)]$ (we denote $[\tr_{\mathfrak H}^{-1}(\cG)]=\{[x]: x\in\tr_{\mathfrak H}^{-1}(\cG)\}$), by Lemma 3.3 in \cite{BbF95} the operator $A_{\tr_{\mathfrak H}^{-1}(\cG)}$ is self-adjoint in $\cH$.
\end{proof}%
Next we turn to the Maslov index in the context of self-adjoint, Fredholm extensions of symmetric operators.
\begin{hypothesis}\lb{i17}
Assume that a one-parameter family  $t\mapsto V_t\in \cB(\cH)$ is contained in $C^1([\alpha,\beta], \cB(\cH)),\ \alpha<\beta$, and $V^*_t=V_t,\ t\in[\alpha,\beta]$.  

Assume Hypothesis \ref{y35} and that $\ker(A^*+V_t-\lambda)\cap\dom(A)=\{0\}$ for all $t\in[\alpha,\beta]$, and $\lambda\geq \kappa$ for some  $\kappa<0$.

Assume that $(\mathfrak H,\Gamma_{1,t},\Gamma_{2,t}),$ $t\in[\alpha,\beta],$ is a one-parameter family of boundary triples associated with $A$ such that the family $t\mapsto\tr_{\mathfrak H,t}:=(\Gamma_{1,t},\Gamma_{2,t})$ is contained in $C^1\big([\alpha,\beta],\cB(\dom(A^*),\mathfrak H\times \mathfrak H)\big)$.
\end{hypothesis}
We remark that the second condition in Hypothesis \ref{i17} often holds in case of second order differential operators considered on bounded domain $\Omega\subset\bbR^n$ (and can be viewed as an abstract version of the unique continuation principle). The third condition is natural in the context of geometric deformations of domain $\Omega$ and the corresponding change of variables in conormal derivative.

The following theorem is a corollary of results from \cite{BbF95} and Proposition \ref{i15}, hence we will only sketch the proof.
\begin{theorem}\lb{i18}
Assume Hypotheses \ref{y35} and \ref{i17}. Let $t\mapsto \cG_t$ be one-parameter family containing in $C^1([\alpha,\beta],\Lambda(\mathfrak H\times \mathfrak H))$. Let $A_{\mathscr D_t}, t\in[\alpha,\beta]$ denote the self-adjoint extension of operator $A$ with domain $\tr_{\mathfrak H,t}^{-1}(\cG_t),t\in[\alpha,\beta]$. Assume that $A_{\mathscr D_t}, t\in[\alpha,\beta]$ has compact resolvent and that there exists $\lambda_{\infty}\in[\kappa, 0)$ $($recall $\kappa$ from Hypothesis \ref{i17}$)$ such that 
\begin{equation*}
\ker(A_{\mathscr D_t}+V_t-\lambda)=0\text{\ for all\ }t\in[\alpha,\beta], \lambda<\lambda_{\infty}.
\end{equation*}
Then
\begin{equation}\lb{i19}
\mo\left(A_{\mathscr D_{\alpha}}+V_{\alpha}\right)-\mo\left(A_{\mathscr D_{\beta}}+V_{\beta}\right)=\mi\big((\bbK_{0,t},\cG_t)|_{t\in[\alpha,\beta]}\big),
\end{equation}
where $\bbK_{\lambda,t}$ denotes the traces of the ``strong" solutions of the equation $A^*u+V_tu=\lambda u,\ u\in \dom(A^*)$, that is, 
\begin{equation}\lb{i20}
\bbK_{\lambda,t}:=\tr_{\mathfrak H,t}\big(\ker(A^*+V_t-\lambda)\big), t\in[\alpha,\beta], \lambda\in\bbR.
\end{equation}
\end{theorem}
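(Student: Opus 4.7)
The plan is to mimic the proof of Theorem \ref{w33}, with the concrete PDE trace $\tr_{\cL^t}$ replaced by the abstract boundary trace $\tr_{\mathfrak{H},t}$, the boundary space $\bndro$ replaced by $\mathfrak{H}\times\mathfrak{H}$, and the second Green identity of Proposition \ref{e12} replaced by its abstract counterpart \eqref{i7}. Proposition \ref{i15} provides the bridge between the fixed quotient symplectic space $(\cH_A,\omega_{\cH})$, where $\cH_A=\dom(A^*)/\dom(A)$, and the concrete space $(\mathfrak{H}\times\mathfrak{H},\omega_{\mathfrak{H}})$; once transferred to $\cH_A$ the machinery of \cite{BbF95} becomes directly applicable, which is exactly the additional input the abstract setting provides compared to the elliptic proof.

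The first step is to verify that each $\bbK_{\lambda(s),t(s)}$ is Lagrangian and that $\tr_{\mathfrak{H},t}$ bijects $\ker(A_{\mathscr{D}_t}+V_t-\lambda)$ onto $\bbK_{\lambda,t}\cap\cG_t$. Since $V_t$ is bounded self-adjoint, $A+V_t$ is symmetric with adjoint $A^*+V_t$, the quotient $\dom((A+V_t)^*)/\dom(A+V_t)$ coincides with $\cH_A$, and the symplectic form is preserved; the extension $A_{\mathscr{D}_t}+V_t$ remains self-adjoint with compact resolvent. Hence \cite[Lemma 3.3]{BbF95}, applied with $S=A+V_t-\lambda$, shows that $[\ker(A^*+V_t-\lambda)]$ and $[\mathscr{D}_t]$ are Lagrangian in $(\cH_A,\omega_{\cH})$, and the symplectomorphism $\wti\tr_{\mathfrak{H},t}$ of Proposition \ref{i15} transports these statements to $(\mathfrak{H}\times\mathfrak{H},\omega_{\mathfrak{H}})$. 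The abstract unique continuation built into Hypothesis \ref{i17} together with an argument paralleling Theorem \ref{w7} yields $\dim(\bbK_{\lambda,t}\cap\cG_t)=\dim\ker(A_{\mathscr{D}_t}+V_t-\lambda)$ for all admissible $(\lambda,t)$. Continuity of $s\mapsto\bbK_{\lambda(s),t(s)}$ in $\Lambda(\mathfrak{H}\times\mathfrak{H})$ will then follow from the $C^1$-dependence of $\tr_{\mathfrak{H},t}$ and $V_t$.

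With these Lagrangian paths in hand, I parametrize the boundary $\Sigma=\cup_{j=1}^4\Sigma_j$ of the rectangle $[\lambda_{\infty},0]\times[\alpha,\beta]$ exactly as in \eqref{w17w}--\eqref{21}, and compute the Maslov index of $s\mapsto(\bbK_{\lambda(s),t(s)},\cG_{t(s)})$ along each side. Homotopy invariance forces the index around the whole loop to vanish, so catenation gives $\mi\big((\bbK_{0,t},\cG_t)|_{t\in[\alpha,\beta]}\big)=-\mi(\Sigma_1)-\mi(\Sigma_3)-\mi(\Sigma_4)$. The side $\Sigma_4$ carries no crossings by the lower-bound assumption, so $\mi(\Sigma_4)=0$. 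On the vertical sides $\Sigma_1$ (with $t=\alpha$) and $\Sigma_3$ (with $t=\beta$), the Maslov crossing form is evaluated at each conjugate point $s_*$ using the abstract Green identity \eqref{i7}: for $(\phi_0,\psi_0)\in\bbK_{\lambda(s_*),\alpha}\cap\cG_\alpha$ represented by $u_0\in\ker(A_{\mathscr{D}_\alpha}+V_\alpha-\lambda(s_*))$, the same algebraic manipulation as in Step~1 of the proof of Theorem \ref{w33} produces
\[
\cQ_{s_*,\cG_\alpha}\big((\phi_0,\psi_0),(\phi_0,\psi_0)\big)=-\|u_0\|_{\cH}^2<0,
\]
so Theorem \ref{masform} yields $\mi(\Sigma_1)=-\mo(A_{\mathscr{D}_\alpha}+V_\alpha)$ and analogously $\mi(\Sigma_3)=\mo(A_{\mathscr{D}_\beta}+V_\beta)$. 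Substituting produces \eqref{i19}.

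The principal technical point---and the only place where the abstract argument genuinely departs from the PDE proof---is establishing the continuity of the lift $s\mapsto u_s\in\dom(A^*)$ near a crossing, which is what justifies differentiating under the inner product in the crossing-form calculation. In Theorem \ref{w33} this was supplied by the uniform elliptic estimate Lemma \ref{q3}. Here the corresponding input is the boundedness of $\wti\tr_{\mathfrak{H},t}^{-1}$ from Proposition \ref{i15}, combined with the $C^1$-dependence of $t\mapsto\tr_{\mathfrak{H},t}$ and with the finite-dimensionality of $\ker(A_{\mathscr{D}_\alpha}+V_\alpha-\lambda(s_*))$ granted by the compactness of the resolvent; together these produce a continuous right inverse on a finite-codimension complement of the kernel, which suffices to construct $u_s$ smoothly in $s$.
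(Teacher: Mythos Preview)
Your proposal is correct and follows essentially the same route as the paper: parametrize the boundary of the rectangle $[\lambda_\infty,0]\times[\alpha,\beta]$, use homotopy invariance plus catenation, show the crossing forms on $\Sigma_1$ and $\Sigma_3$ are sign-definite via the abstract Green identity, and observe there are no crossings on $\Sigma_4$. The one minor difference is in how the regularity of the path $s\mapsto\bbK_{\lambda(s),t(s)}$ is obtained: the paper invokes \cite[Theorem 3.9]{BbF95} to get $C^1$-regularity of $s\mapsto[\ker(A^*+V_{t(s)}-\lambda(s))]$ in the quotient $\cH_A$ and then pushes it forward to $\mathfrak{H}\times\mathfrak{H}$ via the $C^1$-family of symplectomorphisms $\wti\tr_{\mathfrak{H},t(s)}$, whereas you sketch a more hands-on continuity argument for $u_s$ using the bounded inverse of $\wti\tr_{\mathfrak{H},t}$ and finite-dimensionality. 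Both achieve the same end; citing \cite[Theorem 3.9]{BbF95} is cleaner and saves you from having to flesh out the right-inverse construction.
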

\begin{proof}
First, using parametrization \eqref{w17w}-\eqref{21} we introduce two loops with values in $\Lambda(\mathfrak H\times \mathfrak H)$ by the formulas
\begin{align}
&\Sigma\ni s\mapsto \bbK_{\lambda(s), t(s)}\in\Lambda(\mathfrak H\times \mathfrak H),\lb{i21}\\
&\Sigma\ni s\mapsto \cG_{t(s)}\in\Lambda(\mathfrak H\times \mathfrak H).\lb{i22}
\end{align}
By \cite[Theorem 3.9]{BbF95}, the one-parameter family $\Sigma\ni s\mapsto\ker(A^*+V_{t(s)}-\lambda(s))/\dom(A)$ is  continuous and contained in  $C^1(\Sigma_k,\Lambda(\dom(A^*)/\dom(A)),\ 1\leq k\leq 4$. That is, there exists a family of orthogonal projections $\Sigma\ni s\mapsto P_s\in\cB\left(\dom(A^*)/\dom(A)\right)$ such that
\begin{align}
&\ran({P_s})=\ker(A^*+V_{t(s)}-\lambda(s))/\dom(A),\\
&P_s\in C^1(\Sigma_k,\cB(\dom(A^*)/\dom(A))),\ 1\leq k\leq 4,\\
&P_s\in C(\Sigma,\cB(\dom(A^*)/\dom(A))).
\end{align}
Then $\Sigma\ni s\mapsto Q_s:=\wti\tr_{\mathfrak H, t(s)}P_s(\wti\tr_{\mathfrak H, t(s)})^{-1}\in\cB(\mathfrak H\times \mathfrak H)$ is a family of bounded projections such that
\begin{align}
&\ran({Q_s})=\tr_{\mathfrak H,t(s)}\big(\ker(A^*+V_{t(s)}-\lambda(s)\big), s\in\Sigma,\\
&Q_s\in C^1(\Sigma_k,\cB(\mathfrak H\times \mathfrak H)),\ 1\leq k\leq 4,\ Q_s\in C(\Sigma,\cB(\mathfrak H\times \mathfrak H)). \lb{i23}
\end{align}
The projection $Q_s$ may not be orthogonal; however it can be ``straighten" while preserving regularity as in \eqref{i23}.  

Second, we observe that $\mi\big((\bbK_{\lambda(s),t(s)},\cG_{t(s)})|_{s\in\Sigma}\big)=0$ by the homotopy invariance of the Maslov index. On the other hand, 
\begin{align}
\begin{split}
&\mi\big((\bbK_{\lambda(s),t(s)},\cG_{t(s)})|_{s\in\Sigma}\big)=\lb{i24}\\
&\quad +\mi\big((\bbK_{\lambda(s),t(s)},\cG_{t(s)})|_{s\in\Sigma_1}\big)+\mi\big((\bbK_{\lambda(s),t(s)},\cG_{t(s)})|_{s\in\Sigma_2}\big)\\
&\quad +\mi\big((\bbK_{\lambda(s),t(s)},\cG_{t(s)})|_{s\in\Sigma_3}\big)+\mi\big((\bbK_{\lambda(s),t(s)},\cG_{t(s)})_{s\in\Sigma_4}\big).
\end{split}
\end{align}
Finally, proceeding as in the proof of Theorem \ref{w33} one can show that the crossings on $\Sigma_1$ are negative definite, the crossings on $\Sigma_3$ are positive definite, and that there are no crossings on $\Sigma_4$. Thus,

\begin{align}
\begin{split}
0&=-\sum\limits_{\lambda_{\infty}<\lambda<0}\dim\left(\ker(A_{\mathscr D_{\alpha}}+V_{\alpha}-\lambda)\right)\lb{i25}\\
&+\mi\big((\bbK_{\lambda(s),t(s)},\cG_{t(s)})|_{s\in\Sigma_2}\big)+\sum\limits_{\lambda_{\infty}<\lambda<0}\dim(\ker(A_{\mathscr D_{\beta}}+V_{\beta}-\lambda)),
\end{split}
\end{align}
as asserted in \eqref{i20}.
\end{proof}

We will now discuss several particular applications of the results of this subsection.
\subsection{Spectra of $\theta-$periodic Schr\"odinger operators on [0,1] and the Maslov index}
The boundary triple technique employed in Theorem \ref{i18} is well suited for ordinary differential operators. Indeed, let
\begin{equation*}
\cT_{min}:=-\partial_x^2,\ \dom(\cT_{min}):=H^2_0[0,1], \cT_{max}:=(\cT_{min})^*,
\end{equation*}
and recall from \cite[Chapter 3]{GG} that 
\begin{equation*}
\cT_{max}:=-\partial_x^2,\  \dom(\cT_{max}):=H^2[0,1].
\end{equation*}
The operator $\cT_{min}$ admits a boundary triple
\begin{align}
\begin{split}
&\mathfrak H=\C^{2m},\ \Gamma_1: H^2[0,1]\rightarrow \C^{2m},  \Gamma_1 u:= (u(1),u(0))^{\top},\lb{rr12}\\
& \Gamma_2: H^2[0,1]\rightarrow \C^{2m},  \Gamma_2 u:= (u'(1),-u'(0))^{\top}.
\end{split}
\end{align}
Next we turn to a self-adjoint extension of $\cT_{\min}$. For each fixed $\theta\in[0,2\pi)$ the operator 
\begin{align}
&(-\partial_x^2)_{\theta}:L^2([0,1], \C^m)\rightarrow L^2([0,1], \C^m),\ (-\partial_x^2)_{\theta} u:=-u'',\ u\in\dom((-\partial_x^2)_{\theta}),\no \\
&\dom((-\partial_x^2)_{\theta}):=\{u\in AC([0,1],\C^m): u'\in AC[0,1], u''\in L^2([0,1],\C^m), \no\\
&\hspace{6.5cm}  u(1)=e^{\bfi \theta}u(0),\ u'(1)=e^{\bfi \theta}u'(0)\},\no
\end{align}
is self-adjoint with compact resolvent. Let $V\in L^{\infty}([0,1], \C^{m\times m}), V=\overline{V}^{\top}$, and denote $\cL_{\theta}:=(-\partial_x^2)_{\theta}+V$. Then $\cL_{\theta}$ is also self-adjoint, has compact resolvent, and 
\begin{equation}
\inf_{\theta\in [0,2\pi) }\min \{\lambda: \lambda\in\spec(H_{\theta})\}>-\infty.
\end{equation}
Let us denote  $\cG_{\theta}:=(\Gamma_1,\Gamma_2)(\dom(\cL_{\theta}))$. Clearly,
\begin{equation}\lb{rr14}
\cG_{\theta}=\{(e^{\bfi \theta}a,a,-e^{\bfi \theta}b,b): a,b\in\C^m\},
\end{equation}
is contained in $C^1([0,\pi], \Lambda(\C^{2m}\times \C^{2m}))$. Hence, the one-parameter family $\cL_{\theta}, \theta\in[0,\pi],$ together with boundary triple  $(\C^{2m}, \Gamma_1, \Gamma_2)$ satisfy hypotheses of Theorem \ref{i18}. Then  
\begin{equation}\lb{rr15}
\mo(\cL_{\theta_1})-\mo(\cL_{\theta_2})=\mi\big((\bbK, \cG_{\theta})|_{\theta\in[\theta_1,\theta_2]}\big), 0\leq \theta_1<\theta_2\leq \pi,
\end{equation}
where 
\begin{equation}\no
\bbK:=\{(u(1),u(0),u'(1),-u'(0))^{\top}: -u''+Vu=0\}\subset\C^{4m}.
\end{equation}
\begin{remark}
We stress that the result concerning equality of the spectral flow and the Maslov index for Sturm-Liouville operators on $[0,1]$ is obtained in full generality in \cite[Theorem 0.4]{BZ4} . In particular, \eqref{rr15} can be alternatively derived using \cite[Theorem 0.4]{BZ4}. The symplectic structure used in \cite[Theorem 0.4]{BZ4} is determined by the first order system of ODE's equivalent to the eigenvalue problem for original Sturm-Liouville operator. In contrast, our symplectic structure is induced by the right-hand side of the Green's formula \eqref{i7} and we do not need to rewrite the eigenvalue problem as the first order ODE. As a result we deal with Lagrangian planes that are symplectomorphic to their counterparts from \cite{BZ4}.

\end{remark}
\subsection{Spectra of self-adjoint Schr\"odinger operators and the Maslov index} In this section we illustrate \eqref{i19} in the context of self-adjoint extensions of $-\Delta_{min}$, cf. \eqref{ab20}, \eqref{ac10}. 
Hypothesis \ref{d2} is assumed throughout this subsection. Let as recall the following two facts from \cite{GM10}: 

1) there exists a unique linear, bounded operator 
\begin{equation}
\gd:\cD_{\Delta}^1(\Omega)\to (N^{1/2}(\dOm))^*,
\end{equation}
which is compatable with the Dirichlet trace $\gamma_D$, cf. \eqref{N12},

2) there exists a unique linear, bounded operator 
\begin{equation}
\gn:\cD_{\Delta}^1(\Omega)\to (N^{3/2}(\dOm))^*,
\end{equation}
which is compatable with the Neumann trace $\gaN$, cf. \eqref{N32}.

The Dirichlet-to-Neumann map $M_{D,N}$ is defined by
\begin{equation}
M_{D,N}: \left\{
\begin{array}{rl}
(N^{1/2}(\dOm))^*&\to(N^{3/2}(\dOm))^* \\
f&\to-\gn(u_D),  
\end{array}
\right.
\end{equation}
where $u_D$ is the unique solution of the boundary value problem 
\begin{equation}
-\Delta u=0\,\,\hbox{in}\,\,\Om,\,\,\,\,u\in L^{2}(\Om),\,\,\gd u=f\,\,\hbox{in}\,\,\dOm.
\end{equation}
Denoting $\tN u:=\gn u+ M_{D,N}(\gd u)$, one has 
\begin{align}\lb{Green}
&(-\Delta u,v)_{L^{2}(\Om)}-(u,-\Delta v)_{L^{2}(\Om)}\no\\
&=\overline{{\lnoh \tN v, \gd u \rnohs}}-\lnoh \tN u, \gd v \rnohs,
\end{align}
for every $u,v\in\dom(-\Delta_{\max})$, cf. \cite{GM10}. In other words, $-\Delta_{min}$ admits the following boundary triple
\begin{equation}
\mathfrak H:= \noh, \Gamma_1:=R^{-1}\hatt{\gamma}_D,\ \Gamma_2:=\tau_N,
\end{equation}
where $R:N^{1/2}(\partial\Omega)\rightarrow(N^{1/2}(\partial\Omega))^*$ is the Riesz duality isomorphism.
With this at hand, the following proposition is a corollary of Theorem \ref{i18}.
\begin{corollary}
Assume Hypothesis \ref{d2}. Let $\mathscr D\subset \cD_{\Delta}^1(\Omega)$ and assume that the operator $-\Delta_{\mathscr D}$ acting in $L^2(\Omega, \C^m), m\in\bbN$ and given by
\begin{equation}\lb{rr16}
-\Delta_{\mathscr D}u=-\Delta u,\ u\in\dom(-\Delta_{\mathscr D}):=\mathscr D.
\end{equation}
is self-adjoint, bounded from below and has compact resolvent. Let 
\begin{equation*}
t\mapsto V_t\in L^{\infty}(\Omega, \C^{m\times m}), V_t(x)=\overline{V_t(x)^{\top}}, x\in\Omega,
\end{equation*}
be a one-parameter family containing in $C^1([\alpha,\beta], L^{\infty}(\Omega, \C^m))$. Then
\begin{equation}
\mo(-\Delta_{\mathscr D}+V_{\alpha})-\mo(-\Delta_{\mathscr D}+V_{\beta})=\mi\big( (K_{0,t}, \cG)|_{t\in[\alpha,\beta]}\big),
\end{equation}
where 
\begin{align}
&K_{0,t}=(R^{-1}\hatt{\gamma}_D, \tN)\big(\{u\in\cD_{\Delta}^1(\Omega): -\Delta_{max}u+V_tu=0\}\big),\ \cG:=(R^{-1}\hatt{\gamma}_D, \tN)(\mathscr D)\no.
\end{align}
\end{corollary}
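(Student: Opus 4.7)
The plan is to invoke Theorem \ref{i18} with $A := -\Delta_{min}$ acting on $\cH := L^2(\Omega, \C^m)$, and with the (parameter independent) boundary triple $(\mathfrak H, \Gamma_1, \Gamma_2) := (N^{1/2}(\partial\Omega), R^{-1}\hat{\gamma}_D, \tau_N)$ introduced just before the corollary. The identification of $\mathscr D$ with the Lagrangian plane $\cG = (R^{-1}\hat{\gamma}_D, \tau_N)(\mathscr D) \subset \mathfrak H \times \mathfrak H$ is provided by Corollary \ref{i1}, which applies since $-\Delta_{\mathscr D}$ is assumed to be self-adjoint. Since neither the boundary triple nor $\mathscr D$ depends on $t$, the path $t \mapsto \cG$ is constant and hence trivially contained in $C^1\big([\alpha,\beta], \Lambda(\mathfrak H \times \mathfrak H)\big)$, and the regularity requirement on $t \mapsto \tr_{\mathfrak H, t}$ in Hypothesis \ref{i17} is automatic.

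It remains to verify the other standing hypotheses. Hypothesis \ref{y35} holds for $-\Delta_{min}$: it is densely defined and symmetric by Proposition \ref{w3}, and its deficiency indices are equal because it admits self-adjoint extensions (for instance the Dirichlet Laplacian). For Hypothesis \ref{i17}, the $C^1$ regularity of $t \mapsto V_t \in \cB(\cH)$ follows from the continuous embedding $L^\infty(\Omega, \C^{m\times m}) \hookrightarrow \cB(L^2(\Omega, \C^m))$, and the abstract unique continuation condition $\ker(-\Delta_{max} + V_t - \lambda) \cap \dom(-\Delta_{min}) = \{0\}$ holds for every $\lambda \in \R$ by the strong unique continuation principle for Schr\"odinger operators with $L^\infty$ potentials (cf.\ \cite[Theorem 3.2.2]{Is}, which was also invoked in \eqref{UCP}).

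Next I would check the compactness and uniform semiboundedness needed for the hypotheses of Theorem \ref{i18} on the perturbed extensions. The operators $-\Delta_{\mathscr D} + V_t$ are self-adjoint in $\cH$ as bounded symmetric perturbations of a self-adjoint operator, and they retain compact resolvent because $V_t$ is relatively compact with respect to $-\Delta_{\mathscr D}$ (being bounded). Uniform semiboundedness is obtained from the assumption that $-\Delta_{\mathscr D}$ is bounded from below by some $-c$, together with the uniform bound $M := \sup_{t \in [\alpha,\beta]} \|V_t\|_{L^\infty} < \infty$ that follows from continuity of $t \mapsto V_t$ on the compact interval $[\alpha,\beta]$; consequently $-\Delta_{\mathscr D} + V_t \geq -c - M$ for all $t$, and any $\lambda_\infty < -c - M$ satisfies the required condition $\ker(-\Delta_{\mathscr D} + V_t - \lambda) = \{0\}$ for $\lambda < \lambda_\infty$, $t \in [\alpha,\beta]$.

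With all hypotheses verified, formula \eqref{i19} from Theorem \ref{i18} gives exactly the claimed identity, since $\mathscr D_t \equiv \mathscr D$, $\cG_t \equiv \cG$, and $\bbK_{0,t} = \tr_{\mathfrak H}\big(\ker(-\Delta_{max} + V_t)\big)$ coincides with the set $K_{0,t}$ in the statement. There is no substantive obstacle in this verification; the only delicate conceptual point is checking that the trace map $(R^{-1}\hat{\gamma}_D, \tau_N)$ actually defines a boundary triple in the sense of Definition \ref{y33} for a general Lipschitz domain, but this is precisely the content of \cite{GM10} together with the Green identity \eqref{Green} recalled just prior to the statement of the corollary.
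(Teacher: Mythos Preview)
Your proposal is correct and follows exactly the route the paper intends: the paper provides no detailed proof, stating only that the result ``is a corollary of Theorem \ref{i18}'' once the boundary triple $(\mathfrak H,\Gamma_1,\Gamma_2)=(N^{1/2}(\partial\Omega),R^{-1}\hat\gamma_D,\tau_N)$ from \cite{GM10} has been introduced. Your verification of Hypotheses \ref{y35} and \ref{i17}, of compact resolvent and uniform semiboundedness for the perturbed operators, and of the identification $\bbK_{0,t}=K_{0,t}$, $\cG_t\equiv\cG$, is precisely the routine checking the paper leaves implicit.
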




\begin{thebibliography}{xxxxx}

\bibitem[A]{A01} A. Abbondandolo,
{\em Morse Theory for Hamiltonian Systems.}
Chapman \& Hall/CRC Res. Notes Math. {\bf 425}, Chapman \& Hall/CRC, Boca Raton, FL, 2001.
%
\bibitem[AlSi]{AlSi}  A.\ Alonso, B.\ Simon
{\em The Birman--Krein--Vishik theory of self-adjoint extensions of semibounded operators,}
J. Operator Theory {\bf 4} (1980), 251--270.
%
\bibitem[A67]{arnold67}  V. I. Arnold,
{\em Characteristic classes entering in quantization conditions,}
Func. Anal. Appl. {\bf 1} (1967), 1--14.
%
\bibitem[A85]{Arn85} V. I.  Arnold,
{\em Sturm theorems and symplectic geometry,}
 Func. Anal. Appl. {\bf 19} (1985), 1--10.
%
\bibitem[AGMT]{AGMT} M.\ S.\ Ashbaugh, F.\ Gesztesy, M.\ Mitrea, G.\ Teschl, 
{\em Spectral theory for perturbed Krein Laplacians in nonsmooth domains,} 
Adv. Math {\bf 223} (2010), 1372--1467.
%
\bibitem[APS]{APS} M.\ F.\ Atiyah, V.\ K.\ Patodi, and I.\ M.\ Singer,  
{\em Spectral asymmetry and Riemannian geometry III,} 
Math. Proc. Cambridge Philos. Soc. {\bf 79} (1976), 71--99.

%
\bibitem[AS]{AS} M.\ F.\ Atiyah, I.\ M.\ Singer
{\em Index theory for skew-adjoint Fredholm operators,} 
 Inst. Hautes Études Sci. Publ. Math. {\bf 37} (1969), 5--26.
%
\bibitem[B]{B56} R.\ Bott, 
{\em On the iteration of closed geodesics and the Sturm intersection theory,} 
Comm. Pure Appl. Math. {\bf 9} (1956), 171--206.
%
\bi[BF]{BbF95} B.\ Booss-Bavnbek,\ K.\  Furutani,\ {\it The Maslov Index: a functional analytical definition and the spectral flow formula}, Tokyo J.\ Math.\ {\bf 21} (1998), 1--34.
%
\bi[BL]{BL} J.\ Behrndt, M.\  Langer,\ {\it Boundary value problems for elliptic partial
	differential operators on bounded domains}, J. Func. Anal. \ {\bf 243}, (2007) 536--565.
%
\bi[BM]{BM} J.\ Behrndt, T.\  Micheler,\ {\it Elliptic differential operators on Lipschitz domains and abstract boundary value problems}, Journal of Functional Analysis \ {\bf 267} (2014), 3657--3709.
%
\bi[BR]{BB12} J.\ Behrndt, J.\  Rohleder,\ {\it An inverse problem of Calderon type with partial data}, Comm. Partial Diff. Eqns.\ {\bf 37} (2012), 1141--1159.
%
\bibitem[BW]{BW93} B.\ Booss-Bavnbek, K.\ Wojciechowski,
{\em Elliptic boundary problems for Dirac operators,} 
 Birkh\"auser, Boston, MA, 1993.
%
\bibitem[BZ1]{BZ3} B.\ Booss-Bavnbek, C.\ Zhu,
{\em General spectral flow formula for fixed maximal domain,} 
 Cent. Eur. J. Math.  {\bf 3} (2005), 558-577. 
%
\bibitem[BZ2]{BZ1} B.\ Booss-Bavnbek, C.\ Zhu,
{\em The Maslov index in symplectic Banach spaces,} 
Preprint,  http://arxiv.org/abs/1406.0569 
%
\bibitem[BZ3]{BZ2} B.\ Booss-Bavnbek, C.\ Zhu,
{\em The Maslov index in weak symplectic functional analysis,} 
Ann. Global Anal. Geom. {\bf 44} (2013), 283-318. 
%
\bibitem[BZ4]{BZ4} B.\ Booss-Bavnbek and C.\ Zhu,
{\em Weak Symplectic Functional Analysis and General Spectral Flow Formula,} 
Preprint, http://arxiv.org/abs/math/0406139 
%
\bibitem[Br]{Br} V.\ M.\ Bruk,
{\em A certain class of boundary value problems with a spectral parameter in the boundary condition,} Mat. Sb. 100 (242) (1976) 210-216. 
%
\bibitem[BG]{BG}  A.\  Buffa, G.\ Geymonat, {\em On traces of functions in $W^{2,p}(\Omega)$ for Lipschitz domains in $\R^3$}, C. R. Acad. Sci. Paris Sér. I Math. {\bf 8} (2001), 699--704.
%
 \bibitem[CLM]{CLM}  S.\ Cappell, R.\ Lee, and E.\ Miller, {\em On the Maslov index},
Comm.\ Pure Appl.\ Math.\ {\bf 47}  (1994), 121--186.
%
\bibitem[CB]{CB} F.\ Chardard, T.\ J.\ Bridges, {\em Transversality of homoclinic orbits, the Maslov index, and the symplectic Evans function},  Nonlinearity {\bf 28}, (2015) 77--102. 

\bibitem[CDB6]{CDB06} F.\ Chardard, F.\ Dias, and T.\ J.\ Bridges, {\em Fast computation of the Maslov index for hyperbolic linear systems with periodic coefficients,} J. Phys. A {\bf 39} (2006), 14545--14557.
%
\bibitem[CDB9]{CDB09} F.\ Chardard, F.\ Dias, and T.\ J.\ Bridges, {\em Computing the Maslov index of solitary waves. I. Hamiltonian systems on a four-dimensional phase space,} Phys. D {\bf 238} (2009), 1841--1867.
%
\bibitem[CDB]{CDB11} F.\ Chardard, F.\ Dias, and T.\ J.\ Bridges, {\em Computing the Maslov index of solitary waves, Part 2: Phase space with dimension greater than four,} Phys. D {\bf 240} (2011), 1334--1344.
%
\bibitem[CZ]{CZ84} C.\ Conley, E.\ Zehnder, {\em Morse-type index theory for flows and periodic solutions for Hamiltonian equations,} Comm. Pure Appl. Math. {\bf 37} (1984), 207--253. 
%
\bi[CH]{CH} R.\ Courant, D.\ Hilbert, {\em Methods of mathematical physics. Vol. I,} Interscience Publishers, Inc., New York, N.Y., 1953.
%
\bi[CJLS]{CJLS} G.\ Cox,\ C.\ K.\ R.\ T.\ Jones, Y.\ Latushkin, and\ A.\ Sukhtayev,\  {\ The Morse and Maslov indices for multidimentional Schr\"{o}dinger operators with matrix valued potential},  Trans. Amer. Math. Soc. {\bf 368}, (2016) 8145--8207. 
%
\bi[CJM1]{CJM1} G.\ Cox,\ C.\ K.\ R.\ T.\ Jones, and J.\ Marzuola,\  {\em A Morse index theorem for elliptic operators on bounded domains}, Comm.\ Partial Diff.\ Eqns.\ {\bf 40} (2015), 1467--1497.
%
\bi[CJM2]{CJM2} G.\ Cox,\ C.\ K.\ R.\ T.\ Jones,  and J.\ Marzuola,\  {\em Manifold decompositions and indices of Schrödinger operators}, to appear in Indiana Univ. Math. J., http://arxiv.org/abs/1506.07431.
%
\bibitem[CD]{CD77} R. Cushman, J. J. Duistermaat, {\em The behavior of the  index of a periodic linear Hamiltonian system under iteration.}
 Advances in Math. {\bf 23} (1977) 1--21.
%
\bi[DK]{DK74} Ju.\ Daleckii, M.\ Krein, {\em Stability of Solutions of Differential Equations in Banach Space,}
Amer. Math. Soc., Providence, RI, 1974.
%
\bi[DJ]{DJ11} J.\ Deng and C.\ Jones, {\em Multi-dimensional Morse Index Theorems and a symplectic view of elliptic boundary value problems,}
Trans. Amer. Math. Soc. {\bf 363} (2011), 1487--1508.
%
\bibitem[D]{D76} J.\ J.\ Duistermaat, {\em On the Morse index in variational calculus,} Advances in Math. {\bf 21} (1976), 173--195.
%
%
\bibitem[DM]{DM} R.\ G.\ Duran, M.\ A.\ Muschietti {\em On the traces of $W^{2,p}(\Omega)$ for a Lipschitz domain,} Revista Matemática Complutense {\bf 14} (2001), 371--377.
%
\bi[EE]{EE89} D.\ E.\ Edmunds and W.\ D.\ Evans, {\it Spectral theory and 
differential operators}, Clarendon Press, Oxford, 1989.
%
\bi[F95]{F95} G.\ B.\ Folland {\it Introduction to partial differential equations, 2nd edition}, Princeton University Press, Princeton, New Jersey, 1995.
%
\bi[Fr]{Fr} L.\ Friedlander, {\em Some inequalities between Dirichlet and Neumann eigenvalues,} Arch. Rational Mech. Anal., {\bf 116} (1991), 153--160.
%
\bi[F]{F04} K.\ Furutani, {\em Fredholm-Lagrangian-Grassmannian and the Maslov index,} Journal of Geometry and Physics {\bf 51} (2004), 269--331.
%
\bi[GLMS]{GLMS15} F.\ Gesztesy, A.\ Laptev, M.\ Mitrea, and S.\ Sukhtaiev, {\it A bound for the eigenvalue counting function for higher-order Krein Laplacians on open sets}, in {\it Mathematical Results in Quantum Mechanics, QMath12 Conference}, P.\ Exner, W.\ K\"onig, and H.\ Neidhardt (eds), World
Scientific, Singapore, 2015, pp.\ 3--29.
%
\bi[GM10]{GM10} F.\ Gesztesy,\ M.\ Mitrea,\ {\it A discription of all self-adjoint extensions of the Laplacian and Krein-type resolvent formulas in nonsmooth domains,} J. d'Analyse Math {\bf113}, (2011), 53--172. 
%
\bibitem[GM08]{GM08} F.\ Gesztesy, M.\ Mitrea, {\em Generalized Robin boundary conditions, 
Robin-to-Dirichlet maps, and Krein-type resolvent formulas for Schršdinger operators on bounded Lipschitz domains}, In:   Perspectives in partial differential equations, harmonic analysis and applications, 105 -- 173, Proc. Sympos. Pure Math., {\bf 79}, Amer. Math. Soc., Providence, RI, 2008.
%
\bibitem[Ge]{Ge} G.\ Geymonat, {\em Trace theorems for Sobolev spaces on Lipschitz domains. Necessary conditions.}, Annales mathématiques Blaise Pascal {\bf 14} (2007), 187--197
%
\bibitem[GGK1]{GGK1} I.\ Gohberg,\ S.\ Goldberg, and M.\ Kaashoek,  {\em Classes of linear operators. Vol. I.}, Birkh\"auser Verlag, Basel, 1990.
%
\bibitem[GG]{GG} V.\ I.\ Gorbachuk,\ M.\ L.\ Gorbachuk, {\em Boundary value problems for operator differential equations}, Kluwer Academic Publ., Dordrecht, 1991.
%
\bibitem[G]{dG} M.\ A.\ de Gosson, {\em The principles of Newtonian and quantum mechanics}, Imperial College Press, London, 2001.
%
\bibitem[Gr]{Gr} G.\ Grubb, {\em A Characterization of the non-local boundary value problems associated with an elliptic operator},  Ann. Scuola Norm. Sup. Pisa {\bf 22}, (1968) 425--513.
%
\bi[Gr1]{Gr1} G.\ Grubb, {\it Spectral asymptotics for the ``soft'' selfadjoint
	extension of a symmetric elliptic differential operator}, J.\ Operator Th.\ 
{\bf 10}, (1983) 9--20 .
\bi[HS]{HS} P.\ Howard, A.\ Sukhtayev, {\em The Maslov and Morse indices for Schr\"odinger operators on $[0,1]$}, Journal of Differential Equations, {\bf 260}, (2016), 4499-4549.
%
\bi[HLS]{HLS} P.\ Howard, Y.\ Latushkin, and A.\ Sukhtayev, {\em The Maslov index for Lagrangian pairs on $\bbR^{2n}$}, Preprint, arXiv:1608.00632
%
\bibitem[Is]{Is} V.\ Isakov, 
{\em Inverse problems for partial differential equations, Second edition} 
Springer-Verlag, New York, 2006.
%
\bi[JLM]{JLM} C.\ K.\ R.\ T.\ Jones, Y.\ Latushkin,\ R.\ Marangel,\  {\ The Morse and Maslov indices for matrix Hill's equations}, Proceedings of Symposia in Pure Mathematics {\bf 87}, (2013) 205--233.
%
\bi[JLS]{JLS} C.\ K.\ R.\ T.\ Jones, Y.\ Latushkin,\ S.\ Sukhtaiev,\  {\ Counting spectrum via the  Maslov index for one dimensional $\theta-$periodic Schr\"odinger operators}, Proceedings of the AMS {\bf 145}, (2017) 363--377. 
%

\bibitem[Ka]{Kar} Y.\ Karpeshina, 
{\em Perturbation theory for the Schr\"ošdinger operator with a periodic potential,} 
Lecture Notes Math. {\bf 1663}, Springer-Verlag, Berlin, 1997.
%
\bi[K]{K80} T.\ Kato, {\it Perturbation Theory for Linear Operators}, Springer, Berlin, 1980.
%
\bi[Ke]{Ke} J.\ B.\ Keller, {\it Corrected Bohr-Sommerfeld Quantum Conditions for Nonseparable Systems}, Ann.
Physic 4 (1958) 180-188.
%
\bibitem[KL]{KL} P.\ Kirk, M.\ Lesch 
{\em  The $\theta-$invariant, Maslov index, and spectral flow for Dirac-type operators on manifolds with boundary} 
Forum Math. {\bf 16}, (2004) 553--629.
%
\bi[Ko]{Ko} A.\ N.\ Ko\u{c}ube\u{\i},\  {\ On extensions of symmetric operators and symmetric binary relations}, Mat. Zametki 17 (1975) 41-48.
%
\bi[KP]{KP} T.\ Kapitula, K.\ Promislow, {\it Spectral and dynamical stability of nonlinear waves}, Springer, New York, 2003.
%
\bibitem[LSS]{LSS} Y.\ Latushkin, S.\ Sukhtaiev,  A.\ Sukhtayev, {\it The Morse and Maslov indices for Schr\"odinger operators}, J.\ D'Analyse Math., to appear, {http://arxiv.org/abs/1411.1656}.
%
\bi[Mas]{Mas} V.\ P.\ Maslov, {\it Theory of perturbations and asymptotic methods}, Izdat. Moskov. Gos. Univ., Moscow, 1965. French translation Dunod, Paris, 1972. 
%
\bi[McL]{Mc} W.\ McLean, {\it Strongly elliptic systems and boundary integral equations}, Cambridge University Press, Cambridge, 2000.
%
\bibitem[McS]{McS} D.\ McDuff, D.\ Salamon {\em Introduction to Symplectic Topology. Second Edition},  Clarendon Press, Oxford, 1998.
%
\bibitem[M]{M63} J.\ Milnor, {\em Morse Theory},  Annals of Math.\ Stud. {\bf 51}, Princeton Univ. Press, Princeton, N.J., 1963.
%
\bibitem[N95]{Nic} L.\ I.\ Nicolaescu,
{\em The Maslov index, the spectral flow, and decompositions of manifolds}, Duke Math. Journal {\bf 80} (1995), 485--533.
%
\bibitem[PW]{PW} A.\ Portaluri, N.\ Waterstraat, {\em A Morse-Smale index theorem for indefinite elliptic systems and bifurcation}, 2014, preprint.
%
\bibitem[RS93]{rs93} J.\ Robbin, D.\ Salamon,
{\em The Maslov index for paths}, Topology {\bf 32} (1993), 827--844.
%
\bibitem[RS95]{RoSa95} J.\ Robbin, D.\ Salamon, {\it  The spectral flow and the Maslov
index},  Bull. London Math. Soc. {\bf 27} (1995),  1--33.
%
%
\bibitem[R14]{R14} J.\ Rohleder, {\it  Strict Inequality of Robin Eigenvalues for Elliptic Differential
	Operators on Lipschitz Domains},  Journal of Math. Anal. and App. {\bf 418} (2014),  978--984.
%
\bi[RS]{RS1} M.\ Reed, B.\ Simon, {\it Methods of Modern Mathematical Physics. Volume 1.}, Academic Press, 1980.
%
\bibitem[SW]{SW} D.\ Salamon, K.\ Wehrheim,
{\it Instanton Floer homology with Lagrangian boundary conditions},
Geom.\ Topol.\ {\bf 12} (2008), 747--918.
%
\bibitem[S]{S65} S.\ Smale, {\em On the Morse index theorem}, J.\ Math.\ Mech.\ {\bf 14} (1965), 1049 -- 1055; 
see also pp. 535--543 in: {\em The collected papers by Stephen Smale, V. 2}, F.\ Cucker and R.\ Wong, edts.,  
City University of Hong Kong, 2000.

\bibitem[Sw]{Sw}R. C. Swanson, {\em Fredholm intersection theory and elliptic boundary deformation problems,} 
J. Differential Equations {\bf 28} (1978), {\em I}, 189--201, {\em II},
202--219.

%
\bi[T]{T11} M.\ E.\ Taylor, {\it Partial Differential Equations I. Basic Theory}, Appl. Math. Sci., {\bf 115}, Springer-Verlag, 2011.
%
\bibitem[U]{U73} K.\ Uhlenbeck, {\em The Morse index theorem in Hilbert spaces,} J.\ Diff.\ Geometry {\bf 8} (1973), 555--564.
%
\bibitem[Vi]{Vi} M.\ I.\  Vishik, {\em On general boundary value problems for elliptic differential operators,} 
Trudy Mosc. Mat. Obsv. {\bf 1}, (1952) 187--246, (English translation in Amer. Math. Soc.
Transl. {\bf 24}, (1963) 107--172.)
%
\bi[W]{Wl} J.\ Wloka, {\it Partial differential equations}, Cambridge University Press, Cambridge, 1987.

\end{thebibliography}
\end{document}